\documentclass[11pt, letterpaper]{amsart}

\usepackage[usenames,dvipsnames]{color}
\usepackage{amsthm,amsfonts,amssymb,amsmath,amsxtra}
\usepackage[all]{xy}
\SelectTips{cm}{}

\usepackage{xr-hyper}
\usepackage[
   citecolor=Black,
   linkcolor=Red,
   urlcolor=Blue]{hyperref}
\usepackage{verbatim}

\usepackage{mathrsfs}

\usepackage{cleveref}

\RequirePackage{xspace}
\RequirePackage{etoolbox}
\RequirePackage{varwidth}
\RequirePackage{enumitem}
\RequirePackage{mathtools}
\RequirePackage{longtable}
\RequirePackage{multirow}

\usepackage{tikz-cd}
\usetikzlibrary{arrows}

\def\ge{\geqslant}
\def\le{\leqslant}

\def\p{\pi}

\def\<{\langle}
\def\>{\rangle}

\newcommand{\CA}{\mathcal {A}}
\newcommand{\CB}{\mathcal {B}}
\newcommand{\CC}{\mathcal {C}}

\newcommand{\CO}{\mathcal {O}}

\newcommand{\CQ}{\mathcal {Q}}

\newcommand{\CY}{\mathcal {Y}}

\DeclareMathOperator{\End}{End}

\def\tW{\tilde W}

\newcommand{\tB}[0]{\tilde{B}}
\newcommand{\tU}[0]{\tilde{U}}

\newcommand{\dr}[0]{\dot{r}}

\newcommand{\UPJp}[0]{U_{P_J^+}}
\newcommand{\UPJm}[0]{U_{P_J^-}}

\newtheorem{theorem}{Theorem}
\newtheorem{proposition}[theorem]{Proposition}
\newtheorem{lemma}[theorem]{Lemma}

\newtheorem{corollary}[theorem]{Corollary}

\theoremstyle{definition}
\newtheorem{definition}[theorem]{Definition}
\newtheorem{example}[theorem]{Example}

\newtheorem{remark}[theorem]{Remark}

\numberwithin{equation}{section}
\numberwithin{theorem}{section}

\setitemize[0]{leftmargin=*,itemsep=\the\smallskipamount}
\setenumerate[0]{leftmargin=*,itemsep=\the\smallskipamount}

\renewcommand{\to}{%
   \ifbool{@display}{\longrightarrow}{\rightarrow}%
   }
\let\shortmapsto\mapsto
\renewcommand{\mapsto}{%
   \ifbool{@display}{\longmapsto}{\shortmapsto}%
   }
\newlength{\olen}
\newlength{\ulen}
\newlength{\xlen}
\newcommand{\xra}[2][]{%
   \ifbool{@display}%
      {\settowidth{\olen}{$\overset{#2}{\longrightarrow}$}%
       \settowidth{\ulen}{$\underset{#1}{\longrightarrow}$}%
       \settowidth{\xlen}{$\xrightarrow[#1]{#2}$}%
       \ifdimgreater{\olen}{\xlen}%
          {\underset{#1}{\overset{#2}{\longrightarrow}}}%
          {\ifdimgreater{\ulen}{\xlen}%
             {\underset{#1}{\overset{#2}{\longrightarrow}}}
             {\xrightarrow[#1]{#2}}}}%
      {\xrightarrow[#1]{#2}}
   }
\makeatother
\newcommand{\xyra}[2][]{%
   \settowidth{\xlen}{$\xrightarrow[#1]{#2}$}%
   \ifbool{@display}%
      {\settowidth{\olen}{$\overset{#2}{\longrightarrow}$}%
       \settowidth{\ulen}{$\underset{#1}{\longrightarrow}$}%
       \ifdimgreater{\olen}{\xlen}%
          {\mathrel{\xymatrix@M=.12ex@C=3.2ex{\ar[r]^-{#2}_-{#1} &}}}%
          {\ifdimgreater{\ulen}{\xlen}%
             {\mathrel{\xymatrix@M=.12ex@C=3.2ex{\ar[r]^-{#2}_-{#1} &}}}
             {\mathrel{\xymatrix@M=.12ex@C=\the\xlen{\ar[r]^-{#2}_-{#1} &}}}}}%
      {\mathrel{\xymatrix@M=.12ex@C=\the\xlen{\ar[r]^-{#2}_-{#1} &}}}%
   }
\makeatletter
\newcommand{\xla}[2][]{%
   \ifbool{@display}%
      {\settowidth{\olen}{$\overset{#2}{\longleftarrow}$}%
       \settowidth{\ulen}{$\underset{#1}{\longleftarrow}$}%
       \settowidth{\xlen}{$\xleftarrow[#1]{#2}$}%
       \ifdimgreater{\olen}{\xlen}%
          {\underset{#1}{\overset{#2}{\longleftarrow}}}%
          {\ifdimgreater{\ulen}{\xlen}%
             {\underset{#1}{\overset{#2}{\longleftarrow}}}
             {\xleftarrow[#1]{#2}}}}%
      {\xleftarrow[#1]{#2}}
   }
\newcommand{\isoarrow}{%
   \ifbool{@display}{\overset{\sim}{\longrightarrow}}{\xrightarrow\sim}%
   }

\newif\ifshowrevisions
\showrevisionstrue

\begin{document}

\title{Regularity of totally nonnegative wonderful compactification}

\keywords{Total positivity, wonderful compactification, positive Bruhat atlas,
Kac--Moody flag varieties, regular CW complexes, CL-shellability}
\subjclass[2020]{14M27, 20G44, 15B48}

%\date{\today}

\begin{abstract}
Let $G$ be a split real semisimple group of adjoint type.
We prove that the cellular decomposition of the totally nonnegative part of its wonderful
compactification is a regular
CW complex, confirming a conjecture of Galashin, Karp, and Lam.
In particular, the closure of every cell is homeomorphic to a closed ball.
The proof combines a Bruhat atlas compatible with total
positivity and a CL-labeling of the augmented face
poset. The atlas models the strata locally with open
Richardson varieties in the flag variety of a larger Kac--Moody
group and identifies their totally nonnegative cells.
\end{abstract}

\author[Xuhua He]{Xuhua He}
\address{Department of Mathematics and New Cornerstone Science Laboratory, The University of Hong Kong, Pokfulam, Hong Kong, Hong Kong SAR, China}
\email{xuhuahe@hku.hk}
\author{Kaitao Xie}
\address{Department of Mathematics and New Cornerstone Science Laboratory, The University of Hong Kong, Pokfulam, Hong Kong, Hong Kong SAR, China}
\email{kaitaoxie@connect.hku.hk}

\maketitle

%\tableofcontents

\section*{Introduction}\label{section:introduction}
Let $G$ be a split real semisimple group of adjoint type.
The wonderful compactification $\overline G$, introduced by
De Concini and Procesi \cite{DP83}, is a smooth projective
$G\times G$-equivariant compactification of $G$ whose boundary
consists of smooth divisors with normal crossings.
Its $G\times G$-orbits are indexed by subsets of the simple
roots; the unique open orbit is $G$, and the unique closed
orbit is a product of two flag varieties.
Springer's ICM talk \cite{Spr07} surveys its geometry and
connections with representation theory.

In this paper, we show that the wonderful compactification
remains ``wonderful'' in the semi-algebraic setting:
the semi-algebraic cells of its totally nonnegative part
fit together into a regular CW complex.

Let $G_{\ge0}$ be the totally nonnegative part of $G$ in the sense of Lusztig \cite{L94}. The totally nonnegative part $\overline{G_{\ge0}}$ of $\overline{G}$ is defined 
to be the Hausdorff closure of $G_{\ge0}$ in $\overline G(\mathbb R)$.

The first-named author decomposed
$\overline{G_{\ge0}}$ into semi-algebraic cells in \cite{He04}.
Their closure relations and the topology of their closures
remained to be understood. Our main result establishes the
regularity of this cell decomposition, confirming a
conjecture of Galashin--Karp--Lam
\cite{GKL22}.

\begin{theorem}\label{thm:main}
The totally nonnegative wonderful compactification
$\overline{G_{\ge0}}$, equipped with its cell decomposition,
is a regular CW complex. In particular,
$\overline{G_{\ge0}}$ is homeomorphic to a closed ball
of dimension $\dim G$.
\end{theorem}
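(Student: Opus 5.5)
The plan is to use the standard criterion for regularity of stratified spaces from Galashin--Karp--Lam \cite{GKL22}. Suppose a compact semi-algebraic space $X$ is decomposed into open cells, and the decomposition satisfies three conditions: (i) the closure of each cell is a union of cells; (ii) the augmented face poset $\hat P$ is graded, thin and shellable (for instance CL-shellable); (iii) there is a ``Fomin--Shapiro atlas'', i.e.\ each point of a cell $Y$ has a neighborhood that is stratified-isomorphic to a product of $Y$ with a neighborhood of a cone point in a link, compatibly with the order. Then $X$ is a regular CW complex. The theorem therefore reduces to a combinatorial input and a geometric input. The ball statement then follows formally: $\overline{G_{\ge0}}$ is the closure of the unique top cell $G_{>0}$, which has dimension $\dim G$.

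\textbf{Geometric input.} Lusztig, and He \cite{He04}, index the cells of $\overline{G_{\ge0}}$ by triples $(J,w,v)$, or equivalently by a poset $Q_J$ of pairs in $W\times W^J$. I would construct a positive Bruhat atlas, in the spirit of He--Knutson--Lu. One takes the Kac--Moody group $\tilde G$ whose Dynkin diagram is the double of that of $G$, with the two copies joined by edges. Each $G\times G$-stratum $Z_J$ is then modeled locally, near each torus-fixed point, on an open Richardson variety $\mathring R_{\tilde v,\tilde w}$ in the Kac--Moody flag variety $\tilde G/\tilde B$. The key point is that these charts carry totally nonnegative cells to totally nonnegative cells, $\mathring R_{\tilde v,\tilde w}(\mathbb R)_{>0}$, via the positivity of the parametrizations by products of one-parameter subgroups $x_i(a),y_i(a)$. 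Given this compatibility, the closure relations among cells of $\overline{G_{\ge0}}$ are pulled back from Bruhat intervals $[\tilde v,\tilde w]$ in $\tW$. Regularity for totally nonnegative Richardson cells in Kac--Moody flag varieties is known: Galashin--Karp--Lam for finite type, and Bao--He for Kac--Moody type. That result, combined with the local product structure, provides condition (iii) and the closure property (i).

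\textbf{Combinatorial input.} The poset of cells, with a minimum $\hat 0$ adjoined, must be shown to be CL-shellable. Locally it is isomorphic to Bruhat intervals of $\tW$, which are CL-shellable by Björner--Wachs. But the global poset $\hat Q$ is not a single Bruhat interval: the lower intervals below the top cell glue the pieces for different $J$. I would build an explicit CL-labeling of $\hat Q$ by transporting a reflection-order labeling of $\tW$ through the atlas. One has to check that rooted intervals starting at $\hat 0$ still have a unique increasing maximal chain that is lexicographically first. Alternatively, one could identify $\hat Q$ with an interval $[\hat 0,\tilde w_{\max}]$ in some Bruhat-type order in $\tW$ and apply Dyer's shellability directly.

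I expect the main obstacle to be the atlas step. One must produce charts on $\overline G$ that are simultaneously compatible with the $G\times G$-orbit stratification, with the refined stratification into cells, and with positivity. In particular, the images of the charts must cover all of $\overline{G_{\ge0}}$, including the boundary points far from the base point. I would first construct the chart at the $T$-fixed point in the closed orbit, as a $\tilde U^-$-orbit map. I would then use the $G_{>0}\times G_{>0}$-action, together with He's description of $\overline{G_{\ge0}}$ as $G_{\ge0}$-translates of $h_J$, to move that chart and cover every point.
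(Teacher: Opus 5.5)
Your architecture matches the paper's: a positive Bruhat atlas into a Kac--Moody flag variety, a CL-labeling of $\hat{\mathcal Q}$ that has to treat the intervals $[\hat 0,q]$ separately, and then a topological criterion. Using the Galashin--Karp--Lam Fomin--Shapiro-atlas criterion instead of the paper's induction (Bj\"orner's theorem plus the generalized Poincar\'e conjecture) would be an acceptable substitute once a positive atlas exists. However, the geometric input as you specify it fails already for $G=PGL_2$. The cells are the nonnegative parts of the refined strata $[\alpha]_+\cap[\beta]_-$. A chart around a $0$-cell must therefore identify the cells whose closure contains that $0$-cell with (the dual of) a Bruhat interval of length $\dim G$ in $\tilde W$. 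Your doubled diagram for $A_1$ has rank two, so $\tilde W$ is dihedral, and every interval of length $3$ then has exactly two elements in each intermediate rank. But the vertex $[1:0:0:0]$ of $\overline{PGL_{2,\ge0}}$ lies on three $1$-cells (two edges of the square and $\overline{T_{>0}}$) and in three $2$-cells (the square and both triangles). To record both $\alpha$ and $\beta$, the paper needs rank $3|I|$. It takes two copies $I^\flat,I^\sharp$ of the diagram of $G$, not joined to each other, plus pairwise orthogonal nodes $i^\natural$ joined to $i^\flat$ and $i^\sharp$, with $\varphi(J;x,y;v,w)=v^\flat y^\sharp c_{(I\setminus J)^\natural}(x^{-1}w_0)^\sharp(w^{-1}w_0)^\flat$. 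For $PGL_2$ this gives $SL_4$, and $\varphi(\xi_{(1,1)})=s_{1^\natural}s_{1^\sharp}s_{1^\flat}$ has a Boolean lower interval of rank $3$, matching the corner at $z_\emptyset=[1:0:0:0]$.

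Your covering strategy also fails. Translating one chart by $G_{>0}\times G_{>0}$ (or by Weyl group representatives) does not preserve the strata $[\alpha]_+\cap[\beta]_-$, which are stable only under $T\times T$, so the translates are not Bruhat charts for this stratification. The paper instead builds a separate chart $\mathcal O_{(r_1,r_2)}=(\dot r_1U^-,\dot r_2U^+)\cdot\overline T^\circ$ at each of the $|W|^2$ torus-fixed points, with maps built from the projections $\sigma_{r_1,\pm}$, $\tau_{r_2,\pm}$ and $\iota$. The positivity you assert then needs two genuine arguments. First, whenever $\mathcal O_{(r_1,r_2)}$ meets a stratum, it contains that stratum's entire nonnegative part; this uses He's factorization $(g_1,\iota(g_2)^{-1}g_3^{-1})\cdot z_J$, Marsh--Rietsch parametrizations, and the $J$-residue lemma for $\pi_J(\dot r^{-1}g)$. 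Second, $\phi_{(r_1,r_2)}(G_{>0})$ is exactly a totally positive Richardson cell. On the combinatorial side you locate the difficulty correctly but give no mechanism. Your fallback of a single Bruhat-type interval is not available as stated: $\varphi(\mathcal Q)$ is a union of $|W|^2$ lower intervals with incomparable maxima, so Dyer's labeling handles only intervals avoiding $\hat 0$. The missing idea is as follows. Label $\hat 0\lessdot\xi$ by an extra symbol $\eta_\xi$, and prefix every label by the atom the chain passes through. Order the atoms lexicographically via a refinement of Bruhat order on $W\times W$. Insert $\eta_{(r_1,r_2)}$ into a reflection order right after the initial section $N(r_1^\flat r_2^\sharp c_{D(r_1)^\natural})$. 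Two rank-$2$ lemmas then do the work. If $\xi\triangleleft\xi'$ are both covered by $q$, the label of $\xi\lessdot q$ comes after $\eta_\xi$ while that of $\xi'\lessdot q$ comes before $\eta_{\xi'}$. Every atom $\xi_{(r_1,r_2)}\neq\xi_{(v,y^J)}$ below $q$ lies under some $q'\le q$ that also covers a smaller atom. Together these force the unique increasing chain in $[\hat 0,q]$ through $\xi_{(v,y^J)}$, and a similar rank-$2$ count gives thinness.
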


Regularity is a central theme in total
positivity. For flag varieties, the cell parametrizations
and closure relations were established through the work of
Lusztig, Rietsch, and Marsh--Rietsch
\cite{L94,Ri99,Ri06,MR04}. Galashin--Karp--Lam proved regularity
in finite type \cite{GKL22}, and Bao--He established it for
Kac--Moody flag varieties \cite{BH24}. Regularity for
links of identity in reduced double Bruhat cells was established by Hersh \cite{Her14}, Bao--He \cite{BH22} and the authors of this paper \cite{HX26}. 

In the wonderful compactification,
one must control the interaction between cells in
different $G\times G$-orbits. The proof rests on two main ingredients: a positive Bruhat
atlas for the wonderful compactification and a CL-labeling of its augmented face poset. 

For the geometric construction, we use the stratification obtained by intersecting the $B^+\times B^+$-orbits with the $B^-\times B^-$-orbits in each $G\times G$-orbit \cite{HL11}. In the open orbit, these strata are the double Bruhat cells of $G$; in the closed orbit, they are products of open
Richardson varieties. We show that the intersections of these strata with $\overline{G_{\ge0}}$ are precisely the semi-algebraic cells constructed in \cite{He04}. 

The idea of relating the wonderful compactification to a
Kac--Moody flag variety through an atlas originates in
unpublished work of Knutson, Lu, and the first-named author
\cite{HKL}. Building on this viewpoint, we construct a
different family of atlas maps adapted to total positivity.
The resulting atlas simultaneously models the algebraic
strata and their nonnegative parts, uniformly across all
$G\times G$-orbits.

Let $W$ be the Weyl group of $G$. We construct open charts
$\mathcal O_{(r_1,r_2)}$, indexed by $(r_1,r_2)\in W\times W$,
that cover $\overline G$, together with embeddings
\[
\phi_{(r_1,r_2)}:\mathcal O_{(r_1,r_2)}
\hookrightarrow\tilde{\mathcal B},
\]
where $\tilde{\mathcal B}$ is the flag variety of a larger
Kac--Moody group $\tilde G$. Each map restricts to an isomorphism from every nonempty
intersection of a stratum with the chart onto an open
Richardson variety. Whenever a chart
meets a stratum, it contains  the totally
nonnegative part of this stratum. The atlas map identifies this nonnegative
part with the corresponding totally nonnegative Richardson
cell. We call such
an atlas a \emph{positive Bruhat atlas}.

\begin{theorem}
The wonderful compactification and its totally nonnegative part
admit a positive Bruhat atlas.
\end{theorem}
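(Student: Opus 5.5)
We construct the atlas explicitly and then check its compatibility with the strata and with positivity.

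\textbf{The Kac--Moody group $\tilde G$.} Following the idea of \cite{HKL}, take $\tilde G$ with Dynkin diagram two copies of that of $G$, with the simple roots $\alpha_i$ and $\alpha_i'$ joined so that the generalized Cartan matrix is
\[
\begin{pmatrix} A & -A\\ -A & A\end{pmatrix}
\]
(or a closely related choice), where $A$ is the Cartan matrix of $G$. Its Weyl group $\tilde W$ contains $W\times W$ as the parabolic subgroup generated by either copy. The strata of $\overline G$ are indexed by triples $(J,w,v)$ with $J$ a set of simple roots, following \cite{HL11}. We will match each such triple with a pair $(\tilde u\le \tilde v)$ in $\tilde W$ so that:
\begin{itemize}
\item lengths add up correctly, with $\dim$ of the stratum equal to $\ell(\tilde v)-\ell(\tilde u)$;
\item the closure order on strata matches the Bruhat order on intervals of $\tilde W$.
\end{itemize}

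\textbf{Charts and maps.} For $(r_1,r_2)\in W\times W$ we set
\[
\mathcal O_{(r_1,r_2)}=(\dot r_1,\dot r_2)\cdot\mathcal O_{(1,1)},
\]
where $\mathcal O_{(1,1)}\cong U^-\times T_{\overline{\ }}\times U^+$ is the standard affine open big cell of $\overline G$. Here the torus factor is replaced by its closure $\mathbb A^{r}$ via the coordinates $\alpha_i^{-1}$. Define $\phi_{(1,1)}$ by sending $(u_-,(c_i),u_+)$ to
\[
u_-\,\prod_i x_{-\alpha_i'}(c_i)\cdots u_+'\cdot \tilde B^+ ,
\]
a product of root subgroups of $\tilde G$ landing in an opposite Schubert cell $\tilde B^-\dot{\tilde r}\tilde B^+/\tilde B^+$. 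Here the boundary coordinate $c_i=0$ corresponds to dropping the factor. Define the other $\phi_{(r_1,r_2)}$ by translating with suitable lifts of $r_1,r_2$ in $\tilde G$, chosen to be twisted by positive elements so that positivity is preserved. These charts cover $\overline G$, since every $G\times G$-orbit meets the big cell after translation by $W\times W$, by the Bruhat decomposition.

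\textbf{Strata go to Richardson varieties.} We verify that $\phi$ intertwines the $B^\pm\times B^\pm$-orbit conditions with $\tilde B^\pm$-orbit conditions in $\tilde{\mathcal B}$, using equivariance of $\phi_{(1,1)}$ under $U^+\times U^-$ (suitably embedded) and under the torus. Each stratum-chart intersection then maps isomorphically onto the open Richardson variety $\tilde{\mathcal R}_{\tilde u,\tilde v}$ intersected with the image of the chart. We then compare dimensions and irreducibility, and show the image is all of $\tilde{\mathcal R}_{\tilde u,\tilde v}$ by a closedness argument.

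\textbf{Positivity.} We use He's parametrization \cite{He04} of the nonnegative cells by products of $x_i(t)$, $y_i(t)$, $t>0$, and limits in the boundary. This gives:
\begin{itemize}
\item each cell lies in the chart $\mathcal O_{(r_1,r_2)}$ whenever the chart meets the stratum;
\item $\phi$ sends these products to Marsh--Rietsch parametrizations of $\tilde{\mathcal R}_{\tilde u,\tilde v,>0}$ (\cite{MR04}, and \cite{BH24} in Kac--Moody type).
\end{itemize}
Since both sides are cells of equal dimension and $\phi$ is an embedding, this yields equality.

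\textbf{Main obstacle.} We expect the hardest step to be proving containment of the full nonnegative part of a stratum in every chart meeting it, together with exact identification with the positive Richardson cell for nontrivial $(r_1,r_2)$. There, the translated lifts must be chosen carefully with signs. We would first try reducing to $(1,1)$ by showing the nonnegative part of every stratum is contained in $\mathcal O_{(1,1)}$-translates by positive representatives, using Lusztig's positivity of $\dot w$-actions.
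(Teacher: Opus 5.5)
Your charts $\CO_{(r_1,r_2)}=(\dot r_1,\dot r_2)\cdot\CO_{(1,1)}$ agree with the paper's, and He's factorization and the Marsh--Rietsch/Bao--He parametrizations are the right tools. However, the atlas maps and the key positivity step are not actually supplied, and the parts you do specify would not work.

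\textbf{The atlas group and maps.} You index strata by triples $(J,w,v)$, i.e.\ by $B\times B$-orbits. The cells, however, are indexed by the five-tuples $(J;x,y;v,w)\in\CQ$ of intersections $[\alpha]_+\cap[\beta]_-$; for $PGL_2$ there are $13$ cells but only $6$ orbits. The combinatorics of one orbit stratification embeds in the Coxeter group on two copies of $I$ (Chen--Dyer). The paper has to glue $\varphi_+$ (on $I^\natural\sqcup I^\sharp$) and $\varphi_-$ (on $I^\flat\sqcup I^\natural$) along a shared $\natural$-copy, so its $\tilde G$ has rank $3|I|$.

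Two copies already fail for $PGL_2$. By \Cref{nonempty_intersection}, $\CO_{(1,1)}$ meets exactly the eight strata $(J;x,y;1,w)$ with $y\in W_J$. Three of them, $(I;1,s_1;1,1)$, $(I;1,1;1,s_1)$ and $(\emptyset;s_1,1;1,s_1)$, have codimension one. If $\phi_{(1,1)}$ lands in a single opposite Schubert cell $\tilde B^-\dot{\tilde r}\tilde B^+/\tilde B^+$, these must map onto $\mathring{\tilde\CB}_{\tilde r,\tilde w}$ for three distinct $\tilde w\lessdot\tilde w_{\mathrm{top}}$. But in a rank-$2$ Coxeter group no element covers more than two elements. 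Your matrix $\bigl(\begin{smallmatrix}A&-A\\-A&A\end{smallmatrix}\bigr)$ is in any case not a generalized Cartan matrix beyond type $A_1^r$, since $-a_{ij}>0$ for adjacent $i\neq j$.

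The boundary factor also must not drop out at $c_i=0$. In the paper, $\tilde y_{i^\natural}(a_i^{-1})\tilde\alpha^\vee_{i^\natural}(a_i)=\tilde x_{i^\natural}(a_i)\dot{\tilde s}_{i^\natural}\tilde x_{i^\natural}(a_i^{-1})$ degenerates to $\dot{\tilde s}_{i^\natural}$. This is what makes $\phi_{(r_1,r_2)}$ an isomorphism onto the Schubert cell $\mathring{\tilde\CB}_{\varphi(\xi_{(r_1,r_2)})}$ (\Cref{image_atlas_map}), carrying $[\alpha\diamond\beta]\cap\CO_{(r_1,r_2)}$ onto $\mathring{\tilde\CB}_{\varphi(\alpha\diamond\beta),\varphi(\xi_{(r_1,r_2)})}$.

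The other chart maps cannot be translates of $\phi_{(1,1)}$, because translation by $(\dot r_1,\dot r_2)$ does not preserve the stratification. For $PGL_2$, $\CO_{(s_1,1)}$ meets only six strata, with a dihedral rather than Boolean closure poset. The paper builds each $\phi_{(r_1,r_2)}$ directly:
\begin{itemize}
\item it splits $u_1\in\dot r_1U^-\dot r_1^{-1}$ and $u_2\in\dot r_2U^+\dot r_2^{-1}$ by $\sigma_{r_1,\pm}$, $\tau_{r_2,\pm}$ into $U^+$- and $U^-$-parts;
\item it sends the $U^+$-parts to the $\flat$-copy, which records $B^-\times B^-$-orbits;
\item it sends $\iota$ of the $U^-$-parts to the $\sharp$-copy, which records $B^+\times B^+$-orbits;
\item it finishes with $\dot w_0^\sharp\dot w_0^\flat$.
\end{itemize}
Your covering argument is also incomplete: knowing that every $G\times G$-orbit meets some chart does not show the charts cover $\overline G$. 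The paper uses $T\times T$-fixed points for this.

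\textbf{Positivity.} The containment $[\alpha\diamond\beta]_{>0}\subseteq\CO_{(r_1,r_2)}$, whenever the chart meets the stratum, is the heart of the matter, and you only assert it. Rewriting it as $(\dot r_1,\dot r_2)^{-1}\cdot[\alpha\diamond\beta]_{>0}\subseteq\CO_{(1,1)}$ is merely a reformulation. The translate is no longer totally nonnegative: no lift of $r\neq 1$ lies in $G_{\ge0}\subseteq B^-B^+$, and $\CO_{(r_1,r_2)}$ does not depend on the lift anyway. Meanwhile $\CO_{(1,1)}$ meets only the strata with $v=1$, $y\in W_J$.

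The paper works relative to $(r_1,r_2)$:
\begin{itemize}
\item from the $z_1,z_2\in W_J$ of \Cref{nonempty_intersection} it derives, via the $J$-residues $\mathrm{Res}_J$, the Demazure-product inequalities (3.1) for $r_{1,J},r_{2,J}$;
\item it writes points of the cell as $(g_1,\iota(g_2)^{-1}g_3^{-1})\cdot z_J$ (\Cref{cell});
\item it proves that the Levi part $\pi_J(\dot r^{-1}g)$ of a Marsh--Rietsch element lies in $U^-_{b,>0}T_{>0}U^+_{a^{-1},>0}$ (\Cref{levi_nonnegative}), by a case-by-case induction including a sign argument for the torus part;
\item it concludes with \Cref{inside_open_group}.
\end{itemize}

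For the second atlas condition, you need not match every He-parametrized cell with a Marsh--Rietsch cell, and you give no mechanism for doing so. It suffices that $\phi_{(r_1,r_2)}(G_{>0})$ lies in the totally nonnegative Richardson cell. The paper proves this in \Cref{image_positive} using \cite[Lemma~5.6]{GKL22} and \cite[Theorem~5.3]{BH24}, then finishes with a connected-component argument. All lower cells follow by taking closures, as in \Cref{positive_atlas}.
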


The atlas transfers the closure
relations and local topological properties of the totally nonnegative flag varieties, established in
\cite{BH24}, to the wonderful compactification.
In particular, the closure of each cell is a topological manifold
with boundary, whose interior is the cell itself.

The combinatorial ingredient is a shellability theorem
for the augmented face poset. Let
$\mathcal Q$ be the face poset of the totally nonnegative cells
in $\overline{G_{\ge0}}$, and let
$\hat{\mathcal Q}=\mathcal Q\sqcup\{\hat0\}$ be obtained
by adjoining a minimum. We establish the following theorem for the
augmented face poset.

\begin{theorem}
The poset $\hat{\mathcal Q}$ is graded, thin, and CL-shellable.
\end{theorem}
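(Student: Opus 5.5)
The plan is to make the face poset $\mathcal Q$ explicit and then build a CL-labeling from the known one for Kac--Moody Bruhat intervals.

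\textbf{Step 1: describe $\mathcal Q$.} By \cite{He04} and the identification of the cells with the totally nonnegative parts of the strata of \cite{HL11}, the elements of $\mathcal Q$ are triples $(J,v,w)$. Here $J\subseteq I$ indexes the $G\times G$-orbit, and the pair $(v,w)$ records the Richardson data inside that orbit. The closure order is obtained from the positive Bruhat atlas: in a chart $\mathcal O_{(r_1,r_2)}$ the nonnegative cells correspond to totally nonnegative Richardson cells $\tilde{\mathcal R}_{x,y,>0}$ in $\tilde{\mathcal B}$. The closure relations there are $[x,y]\supseteq[x',y']$, by \cite{BH24}. Since the charts cover $\overline G$ and every chart meeting a stratum contains its whole nonnegative part, every lower interval $[\hat 0,\sigma]$ of $\hat{\mathcal Q}$ is isomorphic to the poset of Bruhat intervals $\{[x',y'] : x\le x'\le y'\le y\}\sqcup\{\hat 0\}$ in $\tilde W$. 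I would first check carefully that this identification is compatible across charts, so that the order on $\mathcal Q$ is globally well defined and matches closure.

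\textbf{Step 2: grading and thinness.} The rank function is $\dim$ of the cell plus one, and on a chart it equals $\ell(y)-\ell(x)+1$. Thinness, meaning every length-two interval has exactly four elements, is local. Intervals $[\sigma',\sigma]$ with $\sigma'\neq\hat 0$ reduce to Bruhat-interval posets, and these are thin by the standard diamond property of Bruhat order in the Coxeter group $\tilde W$. Intervals $[\hat 0,\sigma]$ with $\sigma$ one-dimensional reduce to the fact that each $1$-cell is an open arc whose closure adds two distinct $0$-cells. The two $0$-cells are $[x,x]$ and $[y,y]$ with $y=xs$.

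\textbf{Step 3: CL-shellability.} I would construct a CL-labeling by transporting the Williamson-type shelling of the poset of Bruhat intervals $\hat{\mathcal Q}_{x,y}$. This is the one used in \cite{BH24} for Kac--Moody flag varieties, building on Dyer's reflection-order EL-labeling of Bruhat intervals. Fix a reflection order on $\tilde W$. Label a cover $[x',y']\lessdot[x'',y'']$ by the reflection relating $x''$ to $x'$, or $y''$ to $y'$, together with a marker saying which end moved. Label covers from $\hat 0$ by a special symbol that depends on the root of the chain. Rather than reprove this, I would observe that CL-shellability of $\hat{\mathcal Q}$ only requires every rooted interval to have a unique increasing chain that is lexicographically first. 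Every interval of $\hat{\mathcal Q}$ lies inside some $[\hat 0,\sigma]$, and such a lower interval is isomorphic to a Kac--Moody Richardson face poset, which is CL-shellable by \cite{BH24}.

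\textbf{Main obstacle.} A CL-labeling must be a single global labeling of $\hat{\mathcal Q}$, not just one per lower interval. The chart isomorphisms for different $(r_1,r_2)$ may assign different Kac--Moody labels to the same cover. To handle this, I would first try to make the labels intrinsic. For $\sigma'\neq\hat 0$, label a cover by the pair (change in $J$, reflection in $W\times W$) read off from the triple description of Step 1, and check that under each $\phi_{(r_1,r_2)}$ this becomes the Dyer label, up to an order-preserving relabeling fixed by $(r_1,r_2)$. If that fails, the fallback uses a weaker criterion. It suffices, as for regular CW complexes and as in Bj\"orner's theorem, that each lower interval $[\hat 0,\sigma]$ is CL-shellable and that rooted labelings on overlapping intervals agree; the cover labels out of $\hat 0$ are handled by choosing the $0$-cell ordering compatibly.
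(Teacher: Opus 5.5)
There is a genuine gap, and it sits at the intervals containing $\hat 0$. (Note also that cells are indexed by $(J;x,y;v,w)$, not by triples.) The key claim of your Step~1 is false: it is not true that every $[\hat0,\sigma]$ is isomorphic to an augmented Richardson face poset $\{[x',y'] : x\le x'\le y'\le y\}\sqcup\{\hat0\}$.

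A chart $\mathcal O_{(r_1,r_2)}$ maps into the single Schubert cell $\mathring{\tilde{\mathcal B}}_{\varphi(\xi_{(r_1,r_2)})}$. So every Richardson variety it sees has the same top element $\varphi(\xi_{(r_1,r_2)})$. By \Cref{nonempty_intersection}, the chart contains exactly one $0$-cell, namely $\xi_{(r_1,r_2)}$. A chart therefore gives you only the star of that vertex: the upper interval $[\xi_{(r_1,r_2)},\sigma]$, which is anti-isomorphic to the Bruhat interval $[\varphi(\sigma),\varphi(\xi_{(r_1,r_2)})]$. The closure of a cell is glued from one such piece for each $0$-cell it contains.

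Concretely, for $PGL_2$ take the $2$-cell $U^-_{s_1,>0}T_{>0}$, i.e. $q=(I;1,s_1;1,1)$. By \Cref{atoms} it lies over exactly three atoms, and its closure is a triangle. Every $2$-dimensional Richardson closure is a square. So the appeal to \cite{BH24} in Step~3 does not apply, and your fallback presupposes the CL-shellability of $[\hat0,\sigma]$ that is to be proved.

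The same confusion affects your rank-$2$ thinness argument. Each chart containing a $1$-cell contains only one of its two endpoints, and the $0$-cell you call $[x,x]$ lies outside the image of that chart. The paper instead lists the three kinds of rank-$2$ elements, $(\emptyset;x,y;v,y)$, $(\emptyset;x,y;x,w)$ and $(\{i\};x,y;x,y)$, and checks that each covers exactly two atoms.

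The obstacle you single out is also not the real one. The map $\varphi$ is a single global order-reversing embedding of $\mathcal Q$ into $\tilde W$ with convex image (\Cref{combinatorial_atlas}). So the Dyer labels $\varphi(q_2)\varphi(q_1)^{-1}$ are already chart-independent, and rooted intervals avoiding $\hat0$ are handled directly by \cite{Dy92}. What is missing is a labeling under which each $[\hat0,q]$, with $q=(J;x,y;v,w)$, has a unique increasing chain that is lexicographically first. The paper builds it in three steps.
\begin{enumerate}
\item[(i)] It orders the atoms $W\times W$ lexicographically via a linear extension of Bruhat order. By \Cref{atoms}, $\xi_{(v,y^J)}$ is then the least atom below $q$.
\item[(ii)] It makes labels record the atom $\xi$ through which the chain passes: $\lambda(m,\hat0,\xi)=(\xi,\eta_\xi)$ and $\lambda(m,q_1,q_2)=(\xi,\varphi(q_2)\varphi(q_1)^{-1})$.
\item[(iii)] For $\xi=\xi_{(r_1,r_2)}$ it uses a reflection order in which $N(r_1^\flat r_2^\sharp c_{(D(r_1))^\natural})$ is an initial segment (\Cref{initial_section}), and inserts $\eta_\xi$ immediately after that segment.
\end{enumerate}
With these choices, \Cref{diamond_labels} shows the following at any rank-$2$ element covering atoms $\xi\triangleleft\xi'$: the edge from $\xi$ has label above $\eta_\xi$, and the edge from $\xi'$ has label below $\eta_{\xi'}$. \Cref{decreasing_label} produces such a rank-$2$ element below $q$ for every atom other than $\xi_{(v,y^J)}$. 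Together these make the lexicographically first chain (through $\xi_{(v,y^J)}$) increasing, and rule out increasing chains through any other atom.

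Your phrase ``choose the $0$-cell ordering compatibly'' points at (i). Without (ii), (iii) and the two rank-$2$ lemmas, however, nothing controls the increasing chains of $[\hat0,q]$.
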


The proof begins with an order-reversing embedding
of $\mathcal Q$ into the Weyl group of $\tilde G$ with convex
image, providing a combinatorial counterpart to the geometric
atlas. This embedding identifies intervals in $\mathcal Q$
with dual Bruhat intervals.
The essential step is to construct a labeling on the entire augmented poset, including the intervals containing the adjoined minimum.
These intervals record the incidence relations among the
cells in the closure of each cell, and shellability allows us to determine the topology of its boundary.

The positive atlas endows the closure of each cell a manifold structure and identifies its boundary. The combinatorial result determines
the topology of the boundary inductively. The topological criterion recalled in \S\ref{section:regularity}, based on the generalized
Poincar\'e conjecture, then identifies the closure of each cell with a closed ball and proves \Cref{thm:main}.

The paper is organized as follows. In
\S\ref{section:prelimanary} we recall the wonderful
compactification and its totally nonnegative part.
In \S\ref{section:bruhat_atlas} we construct the combinatorial
and geometric atlases. In \S\ref{section:positivity} we prove
compatibility with total positivity and deduce the closure
relations and the manifold structure for the closure of each cell. In \S\ref{section:combinatorics} we prove thinness and
CL-shellability of the augmented face poset. We combine these
results in \S\ref{section:regularity} to prove the regularity
theorem. Finally, \S\ref{section:examples} gives explicit
examples in types $A_1$ and $A_2$.
\subsection*{Acknowledgment}
XH is partially supported by the New Cornerstone Science Foundation through the New Cornerstone Investigator Program and the Xplorer Prize, as well as by the Hong Kong RGC grant 14300023. KX is partially supported by NSFC grant 125B2002 and the New Cornerstone Science Foundation through the New Cornerstone Investigator Program awarded to XH. We thank Huanchen Bao for many useful discussions during an early stage of this project. This manuscript was prepared with the assistance of AI tools to improve linguistic readability.

\section{The wonderful compactification}\label{section:prelimanary}

\subsection{Reductive groups}\label{sec:semisimple_group}

Let $G$ be a connected reductive group split over $\mathbb R$. Fix a maximal torus $T$, and a pair of opposite Borel subgroups $B^+$ and $B^-$ such that $B^+\cap B^- = T$. Denote the unipotent radical of $B^\pm$ by $U^\pm$. Let $\Phi$ be the root system of $G$. For each $\alpha\in\Phi$, we write $U_\alpha$ for the corresponding root subgroup. 

Let $I$ be the index set of simple roots. For $i\in I$, we write $\alpha_i$ the corresponding simple root, and $\alpha^\vee_i$ the corresponding simple coroot. Let $W$ be the Weyl group of $G$. We write $\ell$ for its length function, $\leq$ for the Bruhat order, and $w_0$ for its longest element.

For $J\subseteq I$, let $P_J^+$ be the {\it parabolic subgroup} of $G$ generated by $B^+$ and $U_{-\alpha_i}$ for $i\in J$. Let $P_J^-$ be the {\it opposite parabolic subgroup} of $G$ generated by $B^-$ and the root groups $U_{\alpha_i}$ for $i\in J$. Let $L_J = P_J^+\cap P_J^-$, and $U_{P_J^\pm}$ be the unipotent radical of $P_J^\pm$. We have the Levi decomposition $P_J^\pm = L_J\ltimes U_{P_J^\pm}$. We denote $B_J^\pm:=L_J\cap B^\pm$ and $U_J^\pm:=L_J\cap U^\pm$.

For $J\subseteq I$, let $W_J$ be the parabolic subgroup of $W$ generated by $s_j$ for $j\in J$. Let $W^J$ denote the set of minimal representatives for the cosets in $W/W_J$. Every element $w\in W$ has a unique decomposition $w = w^J w_J$ with $w^J\in W^J$ and $w_J\in W_J$.

\subsection{Total positivity} \label{sec:prelim_positivity}
 For every $i\in I$, we fix isomorphisms $x_i:\mathbb R\to U_{\alpha_i}$, $y_i:\mathbb R\to U_{-\alpha_i}$ such that the map
$$\begin{pmatrix} 1 & a \\ 0 & 1 \end{pmatrix}\mapsto x_i(a), \begin{pmatrix} 1 & 0 \\ b & 1 \end{pmatrix}\mapsto y_i(b),
\begin{pmatrix} c & 0 \\ 0 & c^{-1} \end{pmatrix}\mapsto \alpha^\vee_i(c), a,b\in\mathbb R, c\in\mathbb R^*,$$
defines a homomorphism $SL_2\to G$. The datum $(T,B^+,B^-,x_i,y_i;i\in I)$ is called a {\it pinning} for $G$.

Let $U^+_{\ge0}$ be the submonoid of $G$ generated by $x_i(a)$ for $i\in I$ and $a\in\mathbb R_{>0}$, and $U^-_{\ge0}$ be the submonoid of $G$ generated by $y_i(a)$ for $i\in I$ and $a\in\mathbb R_{>0}$. Let $T_{>0}$ be the identity component of $T(\mathbb R)$. The {\it totally nonnegative monoid} $G_{\ge0}$ is defined to be the submonoid of $G$ generated by $U^\pm_{\ge 0}$ and $T_{>0}$. It is known that $$G_{\ge0} = U^+_{\ge0}T_{>0}U^-_{\ge0} = U^-_{\ge0}T_{>0}U^+_{\ge0}.$$

There is an anti-involution $\iota:G\to G$ determined by
\[
\iota(x_i(a))=y_i(a),\enspace
\iota(y_i(a))=x_i(a),\enspace \iota(t)=t
\]
for $a\in\mathbb R$ and $t\in T$. It preserves $G_{\ge0}$.

For $i \in I$, we define $\dot{s}_i = x_i(-1) y_i(1) x_i(-1) \in G$. For any $w \in W$ with a reduced expression $w = s_{i_1} \cdots s_{i_n}$, we set $\dot{w} = \dot{s}_{i_1} \cdots \dot{s}_{i_n} \in G$. The element $\dot{w}$ is independent of the choice of reduced expression. 

For $J\subseteq I$, let $L_{J,\ge0}$ be the totally nonnegative part of $L_J$ defined from the restriction of the pinning of $G$. Then $L_{J,\ge0} = L_J\cap G_{\ge0}$.

For $w\in W$, choose a reduced expression
$w=s_{i_1}\cdots s_{i_n}$ and set
\[
\begin{aligned}
U^-_{w,>0}
&=\{y_{i_1}(a_1)\cdots y_{i_n}(a_n)|
       a_1,\ldots,a_n>0\},\\
U^+_{w,>0}
&=\{x_{i_1}(a_1)\cdots x_{i_n}(a_n)|
       a_1,\ldots,a_n>0\}.
\end{aligned}
\]
By \cite[Lemma~2.3]{L94}, these subsets are independent of the reduced expression. Moreover, Lusztig \cite[Proposition~2.7 \& Corollary~2.8]{L94} showed that for $v,w\in W$, $G_{\ge0}\cap B^+\dot wB^+\cap B^-\dot vB^-
   =U^-_{w,>0}T_{>0}U^+_{v,>0}$.
In particular, we have the totally positive part $G_{>0}:=U^-_{w_0,>0}T_{>0}U^+_{w_0,>0} = U_{w_0,>0}^+T_{>0}U_{w_0,>0}^-$.

\subsection{The wonderful compactification}\label{sec:prelim_wonderful}
In this subsection, we assume that $G$ is of adjoint type. The {\it wonderful compactification} of $G$, denoted by $\overline G$, was introduced by De Concini and Procesi \cite{DP83}. It is a smooth projective $G \times G$-equivariant completion of $G$ and the boundary is a union of smooth divisors with normal crossings. We review some basic properties of $\overline G$, and refer the reader to \cite{DP83,He05,Spr07,HL11} for comprehensive discussions.

Let $G_{\text{sc}}$ be the simply connected cover of $G$. Let $\lambda$ be a regular dominant weight of $G_{\text{sc}}$ and $V_\lambda$ be the highest weight representation associated to $\lambda$. The action of $G_{\text{sc}}$ on $V_\lambda$ gives an embedding $G\hookrightarrow\mathbb P(\End(V_\lambda))$ since $G$ is of adjoint type. The Zariski closure of the image of $G$ in $\mathbb P(\End(V_\lambda))$ is isomorphic to $\overline G$.

The variety $G$ carries a $G\times G$-action by $(g_1,g_2).g = g_1gg_2^{-1}$. This action extends to a $G\times G$-action on $\overline G$. For $J\subseteq I$, we choose the standard point $z_J\in\overline G$, such that the stabilizer of $z_J$ in $G\times G$ is $$\text{Stab}_{G\times G}(z_J) = (U_{P_J^+},U_{P_J^-})\{(l_1,l_2)\in L_J\times L_J|l_1l_2^{-1}\in Z(L_J)\}.$$
Here, $Z(L_J)$ is the center of $L_J$. We have $\overline G = \bigsqcup_{J\subseteq I}(G\times G)\cdot z_J$. We shall write an element in $\overline G$ as $(g_1,g_2)\cdot z_J$.

The \emph{totally nonnegative part} of the wonderful compactification is $\overline{G_{\ge0}}\subseteq\overline G(\mathbb R)$,
where the closure is taken in the Hausdorff topology. 
The totally positive part $G_{>0}$ is dense in $G_{\ge0}$. Consequently, we have $\overline{G_{>0}}=\overline{G_{\ge0}}$.

\begin{example}\label{ex:rank_one}

We have the embedding
$$PGL_2\hookrightarrow\mathbb P^3,\quad \begin{pmatrix}
    a &b \\
    c &d
\end{pmatrix}\mapsto [a:b:c:d]$$
and we identify $PGL_2$ with 
$\left\{[a:b:c:d]\in\mathbb P^3\big|ad-bc\neq0\right\}.$
Then $\overline{PGL_2}\simeq\mathbb P^3$. It follows from a result of Whitney \cite{Whit52} that the totally nonnegative part of the wonderful compactification is
$$\overline{PGL_{2,\ge0}} = \left\{[a:b:c:d]\in\mathbb P^3\big|a,b,c,d,ad-bc\ge0\right\}.$$
We return to the details of this example in \S\ref{sec:rank_one_details}.
\end{example}

\subsection{A stratification on the wonderful compactification}\label{sec:wonderful_strata}

We consider the collection of triples
$$\CA = \{(J;x,y)|J\subseteq I,x\in W^J,y\in W\}.$$
For $\alpha = (J;x,y)\in\CA$, we denote
$$[\alpha]_+ = (B^+,B^+)(\dot x,\dot y)\cdot z_J,\ [\alpha]_- = (B^-,B^-)(\dot x,\dot y)\cdot z_J.$$
We define partial orders $\leq_+$ and $\leq_-$ on $\CA$ as follows. Suppose that $\alpha = (J;x,y)$ and $\alpha' = (J';x',y')$, we have $\alpha'\leq_+\alpha$ if $J'\subseteq J$ and there exists $u\in W_J$ such that $x'w_{J',0}\leq xu$ and $yu\leq y'w_{J',0}$, where $w_{J',0}$ is the longest element in $W_{J'}$; we have $\alpha'\leq_-\alpha$ if $J'\subseteq J$ and there exists $u\in W_J$ such that $xu\leq x'$ and $y'\leq yu$.

According to \cite[\S3.3]{HL11}, we have
\begin{enumerate}
    \item $\overline G = \bigsqcup_{\alpha\in\CA}[\alpha]_+ = \bigsqcup_{\alpha\in\CA}[\alpha]_- $ are two stratifications;
    \item the Zariski closures $\overline{[\alpha]_+} = \bigsqcup_{\alpha'\leq_+\alpha}[\alpha']_+$ and $\overline{[\alpha]_-} = \bigsqcup_{\alpha'\leq_-\alpha}[\alpha']_-$.
\end{enumerate}

Suppose that $\alpha = (J_1;x,y)$ and $\beta = (J_2;v,w)$ with $J_1=J_2 = J$, we define the concatenation of $\alpha$ and $\beta$ by $\alpha\diamond\beta = (J;x,y;v,w)$, and let
$$[\alpha\diamond\beta] = [\alpha]_+\bigcap[\beta]_-.$$
It follows from \cite[Proposition~3.1 \& Theorem~3.2]{HL11} that
\begin{enumerate}
    \item for $\alpha\diamond\beta = (J;x,y;v,w)$, we have $[\alpha\diamond\beta]\neq\emptyset$ if and only if $v\leq x$ and $y^J\leq w^J$;
    \item $\overline G = \bigsqcup [\alpha\diamond\beta]$ is a stratification;
    \item the Zariski closure $\overline{[\alpha\diamond\beta]} = \bigsqcup_{\alpha'\leq_+\alpha,\beta'\leq_-\beta} [\alpha'\diamond\beta']$.
    \item if $[J;x,y;v,w]\neq\emptyset$, then $$\dim [J;x,y;v,w]=|J|+\ell(x)+\ell(w)+\ell(y_J)-\ell(v)-\ell(y^J).$$
\end{enumerate}
Let $\mathcal{Q} = \{ \alpha \diamond \beta \mid [\alpha \diamond \beta] \neq \emptyset \}$, and define a partial order $\le_{\mathcal{Q}}$ on $\mathcal{Q}$ by $\alpha' \diamond \beta' \le_{\mathcal{Q}} \alpha \diamond \beta$ if $\alpha' \le_+ \alpha$ and $\beta' \le_- \beta$. For $\alpha\diamond\beta\in\CQ$, set $[\alpha\diamond\beta]_{>0} = \overline{G_{\ge0}}\bigcap [\alpha\diamond\beta]$. Then we have $\overline{G_{\ge0}} = \bigsqcup_{\alpha\diamond\beta\in\CQ} [\alpha\diamond\beta]_{>0}$.

Note that minimal elements of the poset $\CQ$ are $(\emptyset;r_1,r_2;r_1,r_2)$ for $r_1,r_2\in W$. We denote $\xi_{(r_1,r_2)}:=(\emptyset;r_1,r_2;r_1,r_2)$. The following description follows directly from the definition of the partial order. 

\begin{lemma} \label{atoms}
Let $r_1,r_2\in W$ and $q=(J;x,y;v,w)\in\CQ$. We have $\xi_{(r_1,r_2)}\leq_{\mathcal Q}q$ if and only if there exist $z_1,z_2\in W_J$, such that
$vz_2\leq r_1\leq xz_1$, and $yz_1\leq r_2\leq wz_2$. In particular, we have $\xi_{(v,y^J)}\leq q$, and if $\xi_{(r_1,r_2)}\leq q$, then $v\leq r_1$, $y^J\leq r_2$.
\end{lemma}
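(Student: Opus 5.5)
The plan is to unwind the two partial orders $\le_+$ and $\le_-$ in the special case $J'=\emptyset$, and then use the standard fact that for $x\in W^J$ and $u\in W_J$ we have $\ell(xu)=\ell(x)+\ell(u)$, hence $x\le xu$. Note that since $\CA$ requires the first Weyl group entry to lie in $W^J$, both $x$ and $v$ lie in $W^J$.

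\emph{The equivalence.} By definition, $\xi_{(r_1,r_2)}\le_{\CQ}q$ means that $(\emptyset;r_1,r_2)\le_+(J;x,y)$ and $(\emptyset;r_1,r_2)\le_-(J;v,w)$.
\begin{itemize}
\item The inclusion $\emptyset\subseteq J$ is automatic.
\item Since $w_{\emptyset,0}=1$, the first condition says there is $z_1\in W_J$ with $r_1\le xz_1$ and $yz_1\le r_2$.
\item The second condition says there is $z_2\in W_J$ with $vz_2\le r_1$ and $r_2\le wz_2$.
\end{itemize}
Combining these two gives exactly the stated criterion, so this part is a direct translation.

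\emph{The element $\xi_{(v,y^J)}$.} Take $r_1=v$ and $r_2=y^J$, and choose $z_2=1$ and $z_1=y_J^{-1}$.
\begin{itemize}
\item $vz_2=v\le v$ holds trivially.
\item Since $q\in\CQ$, the nonemptiness criterion gives $v\le x$. Since $x\in W^J$, we have $x\le xy_J^{-1}$. Hence $v\le xz_1$.
\item $yz_1=y^Jy_Jy_J^{-1}=y^J$, so $yz_1\le y^J$.
\item Nonemptiness also gives $y^J\le w^J$, and $w^J\le w^Jw_J=w$, so $y^J\le wz_2$.
\end{itemize}

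\emph{The necessary conditions.} Suppose $\xi_{(r_1,r_2)}\le q$, and take $z_1,z_2$ as in the criterion.
\begin{itemize}
\item Since $v\in W^J$ and $z_2\in W_J$, we get $v\le vz_2\le r_1$.
\item Write $yz_1=y^J(y_Jz_1)$ with $y_Jz_1\in W_J$. Since $y^J\in W^J$, we get $y^J\le yz_1\le r_2$.
\end{itemize}

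No step is a real obstacle. The only points needing care are remembering that $v\in W^J$ (which comes from the definition of $\CA$) and choosing $z_1=y_J^{-1}$ in the second part.
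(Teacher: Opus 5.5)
Your proposal is correct and matches the paper, which simply states that the lemma follows directly from the definitions of $\le_+$ and $\le_-$. Your unwinding with $J'=\emptyset$ and $w_{\emptyset,0}=1$ is exactly that argument, and your uses of length-additivity on $W^J\times W_J$ (with $z_2=1$, $z_1=y_J^{-1}$ for $\xi_{(v,y^J)}$) correctly supply the ``in particular'' claims.
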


\section{A Bruhat atlas for the wonderful compactification}\label{section:bruhat_atlas}

\subsection{Kac--Moody groups and flag varieties}
Let $I$ be a finite set, and let $A = (a_{ij})_{i,j\in I}$ be a symmetrizable generalized Cartan matrix. A {\it Kac--Moody datum} associated with $A$ is a  sextuple $\mathcal D = (I,A,X,Y,(\alpha_i)_{i\in I}, (\alpha^\vee_i)_{i\in I})$ where $X$ and $Y$ are two free $\mathbb Z$-modules dual to each other, and the simple roots $\alpha_i\in X$ and simple coroots $\alpha_j^\vee\in Y$ satisfy $\langle\alpha_i,\alpha^\vee_j\rangle = a_{ij}$ for all $i,j\in I$. The {\it minimal Kac--Moody group} $\tilde G$ over $\mathbb C$ associated with $\mathcal D$ is the split group over $\mathbb C$ generated by the split torus $T$ associated with $Y$ and the root groups $U_{\pm\alpha_i}$ for all $i\in I$, subject to the Tits relations. 

The notions of Borel subgroups, parabolic subgroups, Levi subgroups, etc. in \S\ref{sec:semisimple_group} generalize to the Kac--Moody setting. We use $\tilde B^\pm$, $\tilde P_J^\pm$, $\tilde L_J$ etc. to distinguish the Kac--Moody objects from finite type objects whenever there might be a confusion.

Let $\tilde{\mathcal B}=\tilde G/\tilde B^+$ be the flag variety of $\tilde G$.
For $\tilde v,\tilde w\in\tilde W$, write
\[
\mathring{\tilde{\mathcal B}}_{\tilde w}=\tilde B^+\dot{\tilde w}\tilde B^+/\tilde B^+,
\qquad
\mathring{\tilde{\mathcal B}}^{\tilde v}=\tilde B^-\dot{\tilde v}\tilde B^+/\tilde B^+
\]
for the Schubert and opposite Schubert cells, respectively.
Their intersection
\[
\mathring{\tilde{\mathcal B}}_{\tilde v,\tilde w}=\mathring{\tilde{\mathcal B}}_{\tilde w}\cap\mathring{\tilde{\mathcal B}}^{\tilde v}
\]
is the \emph{open Richardson variety}. It is nonempty if and
only if $\tilde v\leq\tilde w$. These varieties give the stratifications
\[
\tilde{\mathcal B}
 =\bigsqcup_{\tilde w\in\tilde W}\mathring{\tilde\CB}_{\tilde w}
 =\bigsqcup_{\tilde v\in\tilde W} \mathring{\tilde\CB}^{\tilde v}
 =\bigsqcup_{\tilde v\leq \tilde w}\mathring{\tilde\CB}_{\tilde v,\tilde w}.
\]

Denote their Zariski closures by
$\tilde{\mathcal B}_{\tilde w}$,
$\tilde{\mathcal B}^{\tilde v}$, and
$\tilde{\mathcal B}_{\tilde v,\tilde w}$, respectively.
We have 
\[
\tilde{\mathcal B}_{\tilde w}
 =\bigsqcup_{\tilde w'\leq \tilde w}\mathring{\tilde\CB}_{\tilde w'}, \quad
\tilde{\mathcal B}^{\tilde v}
 =\bigsqcup_{\tilde v'\geq\tilde v}\mathring{\tilde\CB}^{\tilde v'}, \quad
\tilde{\mathcal B}_{\tilde v,\tilde w}
 =\bigsqcup_{\tilde v\leq \tilde v'\leq\tilde w'\leq\tilde w}\mathring{\tilde{\mathcal B}}_{\tilde v',\tilde w'}.
\]

\subsection{Bruhat atlas}

Let $M$ be a finite dimensional irreducible variety with a stratification $M = \bigsqcup_{y \in \CY} \mathring{M}_y$ by finitely many strata. A {\it Bruhat atlas} on $(M, \CY)$ consists of the following data:

\begin{itemize}
    \item A Kac--Moody group $\tilde G$
    
    \item an open covering $M=\cup_{f\in F} U_f$;

     \item for each $f\in F$, an embedding $\phi_f:U_f\to\tilde\CB$, where $\tilde\CB$ is the flag variety of $\tilde G$ such that whenever $\mathring M_y \cap U_f \neq \emptyset$, the image $\phi_f(U_f \cap \mathring M_y)$ is an open Richardson variety in $\tilde\CB$.
\end{itemize}

The main result of this section is the construction of such an atlas for the wonderful compactification $\overline G$.

\subsection{The atlas group}
\label{sec:atlas_group}

We construct a larger Kac--Moody group whose flag variety will
provide the target of our atlas maps. Its Weyl group will also
provide a combinatorial model for the closure poset $\mathcal Q$.

Let $I$ be the index set of simple roots of $G$, and $A = (a_{ij})$ its Cartan matrix. Take three copies $I^\flat, I^\natural, I^\sharp$ of $I$ and set
\[
\tilde I = I^\flat \sqcup I^\natural \sqcup I^\sharp.
\]
Define a generalized Cartan matrix $\tilde A = (\tilde a_{\tilde i,\tilde j})$ by:
\begin{itemize}
    \item for $i,j\in I$: $\tilde a_{i^\flat,j^\flat} = \tilde a_{i^\sharp,j^\sharp} = a_{ij}$;
    \item for $\tilde i\in I^\flat$, $\tilde j\in I^\sharp$: $\tilde a_{\tilde i,\tilde j} = \tilde a_{\tilde j,\tilde i} = 0$;
    \item for $i\in I$: $\tilde a_{i^\flat,i^\natural} = \tilde a_{i^\natural,i^\flat} = -1$, $\tilde a_{i^\sharp,i^\natural} = \tilde a_{i^\natural,i^\sharp} = -1$;
    \item for $i\neq j$: $\tilde a_{i^\flat,j^\natural} = \tilde a_{j^\natural,i^\flat} = 0$, $\tilde a_{i^\sharp,j^\natural} = \tilde a_{j^\natural,i^\sharp} = 0$, $\tilde a_{i^\natural,j^\natural} = \tilde a_{j^\natural,i^\natural} = 0$;
    \item for all $\tilde i\in\tilde I$: $\tilde a_{\tilde i,\tilde i} = 2$.
\end{itemize}
Let $\tilde G$ be the minimal Kac–Moody group associated to $(\tilde I,\tilde A)$. 

\begin{example}
When the Coxeter system $(W,I)$ is of type $F_4$, the Coxeter diagram of $\tilde W$ is as follows:
\begin{center}
\begin{tikzpicture}[baseline=0]
	\node  (a) at (0,0) {$\circ$};
	\node   at (0,-0.3) {$\scriptstyle 1^\flat$};
	\node (b) at (1,0) {$\circ$};
	\node at (1, -0.3) {$\scriptstyle 2^\flat$};
	\node (c) at (2,0) {$\circ$};
	\node at (2, -0.3) {$\scriptstyle 3^\flat$};
    \node (d) at (3,0) {$\circ$};
    \node at (3,-0.3) {$\scriptstyle 4^\flat$};
    
    \node (e) at (0,1) {$\circ$};
    \node at (0.3,1) {$\scriptstyle 1^\natural$};
    \node (f) at (1,1) {$\circ$};
    \node at (1.3,1) {$\scriptstyle 2^\natural$};
    \node (g) at (2,1) {$\circ$};
    \node at (2.3,1) {$\scriptstyle 3^\natural$};
    \node (h) at (3,1) {$\circ$};
    \node at (3.3,1) {$\scriptstyle 4^\natural$};

    \node (i) at (0,2) {$\circ$};
    \node at (0,2.3) {$\scriptstyle 1^\sharp$};
    \node (j) at (1,2) {$\circ$};
    \node at (1,2.3) {$\scriptstyle 2^\sharp$};
    \node (k) at (2,2) {$\circ$};
    \node at (2,2.3) {$\scriptstyle 3^\sharp$};
    \node (l) at (3,2) {$\circ$};
    \node at (3,2.3){$\scriptstyle 4^\sharp$};
    
	\draw (a) -- (b) ;
    \draw[>=stealth, double, double distance=1pt, <-] (b) -- (c);
    \draw (c) -- (d);
    \draw (i) -- (j);
    \draw[>=stealth, double, double distance=1pt, <-] (j) -- (k);
    \draw (k) -- (l);
    \draw (a) -- (e) --  (i);
    \draw (b) -- (f) --  (j);
    \draw (c) --  (g) --  (k);
    \draw (d) -- (h) -- (l);
	\end{tikzpicture}.
\end{center}
\end{example}

Denote by $\tilde L_{I^\flat}$ and $\tilde L_{I^\sharp}$ the standard Levi subgroups corresponding to $I^\flat$ and $I^\sharp$. By definition, both $\tilde L_{I^\sharp}/Z(\tilde L_{I^\sharp})$ and $\tilde L_{I^\flat}/Z(\tilde L_{I^\flat})$ are isomorphic to $G$. These identifications induce isomorphisms 
$$U^\pm\xrightarrow[]{\sim}\tilde U_{I^\sharp}^\pm, u\mapsto u^\sharp,\text{ and }U^\pm\xrightarrow[]{\sim}\tilde U_{I^\flat}^\pm, u\mapsto u^\flat,$$

We choose a pinning $(\tilde T,\tilde B^+,\tilde B^-,\tilde x_{\tilde i},\tilde y_{\tilde i};\tilde i\in\tilde I)$ of $\tilde G$ so that
$$x_i(a)^\sharp = \tilde x_{i^\sharp}(a),\enspace y_i(a)^\sharp = \tilde y_{i^\sharp}(a),\enspace x_i(a)^\flat =\tilde x_{i^\flat}(a),\enspace y_i(a)^\flat = \tilde y_{i^\flat}(a).$$
For each $\tilde i\in\tilde I$, we have the notations $\tilde\alpha^\vee_{\tilde i}$ and $\dot{\tilde s}_{\tilde i}$ for $\tilde G$ as in \S\ref{sec:prelim_positivity}. This also induces the following notations
 $$   \dot s_i^\sharp = x_i(-1)^\sharp y_i(1)^\sharp x_i(-1)^\sharp,\enspace \dot s_i^\flat = x_i(-1)^\flat y_i(1)^\flat x_i(-1)^\flat, $$
 $$\alpha_i^\vee(a)^\sharp = x_i(a)^\sharp y_i(-a^{-1})^\sharp x_i(a)^\sharp\dot s_i^\sharp,\enspace \alpha^\vee_i(a)^\flat = x_i(a)^\flat y_i(-a^{-1})^\flat x_i(a)^\flat\dot s_i^\flat$$
 for each $i\in I$. 

For $t\in T\subseteq G$, we write $t = \prod_{i\in I}\alpha_i^\vee(a_i)$, and denote
$$t^\sharp = \prod_{i\in I}\alpha^\vee_i(a_i)^\sharp,\enspace t^\flat = \prod_{i\in I}\alpha^\vee_i(a_i)^\flat.$$
However, since $G$ is assumed to be of adjoint type, the product expression of $t$ is not unique. Therefore $t^\sharp$ (resp. $t^\flat$) is only defined up to a central element in the derived subgroup of $\tilde L_{I^\sharp}$ (resp. $\tilde L_I^\flat$).

\subsection{Combinatorial atlas}\label{sec:combinatorial_atlas}

Let $\tilde W$ be the Weyl group of $\tilde G$, considered as a poset with the usual Bruhat order. In this section, we embed $\CQ$ into $\tilde W$. We have the standard identifications: \[ W\simeq\tilde W_{I^\flat},\qquad W\simeq\tilde W_{I^\sharp}. \] For $w\in W$, denote its images by $w^\flat$ and $w^\sharp$.

For $K\subseteq I$, let $K^\natural = \{k^\natural \mid k\in K\}$ and $c_{K^\natural} = \prod_{k\in K} s_{k^\natural}$ be the Coxeter element in $\tilde W_{K^\natural}$. Let $w_0$ be the longest element of $W$.

Consider $\mathcal A\diamond\mathcal A := \{\alpha\diamond\beta \mid \alpha,\beta\in\mathcal A\}$ with the partial order $\leq_{\mathcal A\diamond\mathcal A}$ given by $\alpha'\diamond\beta' \leq_{\mathcal A\diamond\mathcal A} \alpha\diamond\beta$ if $\alpha'\leq_+\alpha$ and $\beta'\leq_-\beta$. Define
\begin{gather*}
\varphi_+ : \mathcal A \to \tilde W,\qquad (J;x,y) \mapsto y^\sharp \, c_{(I\setminus J)^\natural} \, (x^{-1}w_0)^\sharp, \\
\varphi_- : \mathcal A \to \tilde W,\qquad (J;x,y) \mapsto x^\flat \, c_{(I\setminus J)^\natural} \, (y^{-1}w_0)^\flat,\\
\varphi : \mathcal A\diamond\mathcal A \to \tilde W,\qquad (J;x,y;v,w) \mapsto v^\flat \, y^\sharp \, c_{(I\setminus J)^\natural} \, (x^{-1}w_0)^\sharp \, (w^{-1}w_0)^\flat.
\end{gather*}

According to \cite[Theorem 2.3]{CD03}, we have:
\begin{itemize}
    \item $\varphi_+$ defines an order-reversing embedding from $(\mathcal A,\leq_+)$ into $(\tilde W,\leq)$ with convex image $\bigcup_{K\subseteq I} \tilde W_{I^\sharp} c_{K^\natural} \tilde W_{I^\sharp}$;
    \item $\varphi_-$ defines an order-reversing embedding from $(\mathcal A,\leq_-)$ into $(\tilde W,\leq)$ with convex image $\bigcup_{K\subseteq I} \tilde W_{I^\flat} c_{K^\natural} \tilde W_{I^\flat}$.
\end{itemize}
It then follows that $\varphi$ is an order-reversing embedding from $(\mathcal A\diamond\mathcal A,\leq_{\mathcal A\diamond\mathcal A})$ into $(\tilde W,\leq)$ with convex image $\bigcup_{K\subseteq I} \tilde W_{I^\flat\sqcup I^\sharp} c_{K^\natural} \tilde W_{I^\flat\sqcup I^\sharp}$.

\begin{proposition} \label{combinatorial_atlas}
The restriction of $\varphi$ to $\CQ$ is an order-reversing embedding with convex image
$$\{\tilde w\in\tilde W\big|\tilde w\leq r_1^\flat r_2^\sharp c_{I^\natural}(r_1^{-1}w_0)^\sharp (r_2^{-1}w_0)^\flat\text{ for some }r_1,r_2\in W\}.$$

\begin{proof}
Denote $$\CC := \{\tilde w\in\tilde W\big|\tilde w\leq r_1^\flat r_2^\sharp c_{I^\natural}(r_1^{-1}w_0)^\sharp (r_2^{-1}w_0)^\flat,\text{ for some }r_1,r_2\in W\}.$$
Note that $\CC$ is a union of some Bruhat lower ideals and thus is convex. It is sufficient to show that $\varphi(\CQ) = \CC$. 

Suppose that $(J;x,y;v,w)\in\CQ$, we have $y^J\leq w^J$ and $v\leq x$. There exist $r_1,r_2\in W$ such that $v\leq r_1\leq x\leq xy_J^{-1}$ and $y^J\leq r_2\leq w^J\leq w$. Note that $c_{(I\setminus J)^\natural}$ commutes with $r^\flat$ and $r^\sharp$ for any $r\in W_J$. We may write
$$\varphi(J;x,y;v,w) = v^\flat (y^J)^\sharp c_{(I\setminus J)^\natural}(y_Jx^{-1}w_0)^\sharp (w^{-1}w_0)^\flat,$$
and we have
$\varphi(J;x,y;v,w)\leq r_1^\flat r_2^\sharp c_{I^\natural}(r_1^{-1}w_0)^\sharp (r_2^{-1}w_0)^\flat$. Thus, $\varphi(\CQ)\subseteq\CC$.

Conversely, for $\tilde w\in\CC$, we see that there exist $(J;x,y;v,w)\in\CA\diamond\CA$ such that $\varphi(J;x,y;v,w) = \tilde w$. Suppose that $\varphi(J;x,y;v,w)\leq
r_1^\flat r_2^\sharp c_{I^\natural}
(r_1^{-1}w_0)^\sharp(r_2^{-1}w_0)^\flat$. Then $v\leq r_1^J\leq x$ and $y^J\leq r_2^J\leq w^J$. We have $y^J\leq w^J$ and $v\leq x$. Therefore, $(J;x,y;v,w)\in\CQ$. We conclude that $\varphi(\CQ) = \CC$.
\end{proof}
\end{proposition}

\subsection{Open charts and atlas maps}\label{sec:atlas_map}

Set $\overline T^\circ := \bigsqcup_{J\subseteq I} (T,T)\cdot z_J$. For $i\in I$, let $\omega_i^\vee$ be the corresponding fundamental coweight of $G$. There is an isomorphism $\pi:\mathbb C^I \to \overline T^\circ$ given by
\[
\underline{a} = (a_i)_{i\in I} \;\longmapsto\; \Bigl(\prod_{i: a_i\neq0} \omega_i^\vee(a_i^{-1}),\; e\Bigr) \cdot z_{\{i\mid a_i\neq0\}}.
\]
Write $t_{\underline{a}} = \prod_{i\in I,a_i\neq0}\omega_i^\vee(a_i^{-1})$. For $r_1,r_2\in W$, define the open chart
\[
\mathcal O_{(r_1,r_2)} = (\dot r_1 U^-, \dot r_2 U^+) \cdot \overline T^\circ \subseteq \overline G.
\]
We have an identification
\[
\begin{aligned}
(\dot r_1U^-\dot r_1^{-1})\times
(\dot r_2U^+\dot r_2^{-1})\times\mathbb C^I
&\xrightarrow{\sim}\mathcal O_{(r_1,r_2)},\\
(u_1,u_2;\underline a)
&\longmapsto
(u_1\dot r_1,u_2\dot r_2)
\cdot\pi(\underline{a})
\end{aligned}
\]

These charts cover $\overline G$. Indeed, their union is a $T\times T$-stable open subset containing all the $T\times T$-fixed points $(\dot r_1,\dot r_2)\cdot z_\emptyset$. These are all the torus fixed points of $\overline G$. If the complement were nonempty, its projectivity and $T\times T$-stability would imply that it contains a torus fixed point, a contradiction.

First recall the following isomorphisms for $r\in W$:
\[
(\dot r U^- \dot r^{-1} \cap U^+) \times (\dot r U^- \dot r^{-1} \cap U^-) \xrightarrow{\sim} \dot r U^- \dot r^{-1},\quad (g_1,g_2)\mapsto g_1g_2,
\]
\[(\dot r U^- \dot r^{-1} \cap U^-) \times (\dot r U^- \dot r^{-1} \cap U^+) \xrightarrow{\sim} \dot r U^- \dot r^{-1},\quad (g_1',g_2')\mapsto g_1'g_2',\]
and analogously for $\dot r U^+ \dot r^{-1}$. Define the projections
\begin{gather*}
\sigma_{r,-} : \dot r U^- \dot r^{-1} \to \dot r U^- \dot r^{-1} \cap U^-,\quad g_1g_2 \mapsto g_2, \\
\sigma_{r,+} : \dot r U^- \dot r^{-1} \to \dot r U^- \dot r^{-1} \cap U^+,\quad g_1'g_2' \mapsto g_2',\\
\tau_{r,-} : \dot r U^+ \dot r^{-1} \to \dot r U^+ \dot r^{-1} \cap U^-,\quad h_1h_2 \mapsto h_2,\\
\tau_{r,+} : \dot r U^+ \dot r^{-1} \to \dot r U^+ \dot r^{-1} \cap U^+,\quad h_1'h_2' \mapsto h_2'.
\end{gather*}

Let 
\[
m : \mathbb C^I \to \tilde L_{I^\natural},\qquad
\underline{a} \mapsto \Bigl(\prod_{i: a_i\neq0} \tilde y_{i^\natural}(a_i^{-1})\tilde \alpha^\vee_{i^\natural}(a_i)\Bigr) \Bigl(\prod_{j: a_j=0} \dot{\tilde s}_{j^\natural}\Bigr),
\]
and let $w_0$ be the longest element of $W$. 

We define the {\it atlas map} $\phi_{(r_1,r_2)} : \mathcal O_{(r_1,r_2)} \longrightarrow \tilde{\mathcal B}$ by sending $(u_1,u_2;\underline{a})$ to
\[\begin{aligned}&\bigl(\iota(\tau_{r_2,-}(u_2)\dot r_2)\bigr)^{-1,\sharp}\; \bigl(\sigma_{r_1,+}(u_1)\dot r_1 \bigr)^\flat\:m(\underline{a})\\ &\quad  \bigl(\tau_{r_2,+}(u_2)\dot r_2\bigr)^{-1,\flat}\; \bigl(\iota(\sigma_{r_1,-}(u_1)\dot r_1 )\bigr)^\sharp\;\dot w_0^\sharp \dot w_0^\flat\tilde B^+/\tilde B^+.\end{aligned}\]
Note that the derived subgroups of $\tilde L_{I^\sharp}$ and $\tilde L_{I^\flat}$ commute with each other.

Denote 
\begin{align*}
 \phi_{(r_1,r_2)}^\sharp : \mathcal O_{(r_1,r_2)} &\to \tilde{\mathcal B}, \\   
 (u_1,u_2;\underline{a}) &\mapsto \bigl(\iota(\tau_{r_2,-}(u_2)\dot r_2)\bigr)^{-1,\sharp}\; m(\underline{a})\; \bigl(\iota(\sigma_{r_1,-}(u_1)\dot r_1 )\bigr)^\sharp\; \dot w_0^\sharp\tilde B^+/\tilde B^+; \\
 \phi_{(r_1,r_2)}^\flat : \mathcal O_{(r_1,r_2)} &\to \tilde{\mathcal B}, \\
 (u_1,u_2;\underline{a}) &\mapsto \bigl(\sigma_{r_1,+}(u_1)\dot r_1 \bigr)^\flat\; m(\underline{a})\; \bigl(\tau_{r_2,+}(u_2)\dot r_2\bigr)^{-1,\flat}\; \dot w_0^\flat\tilde B^+/\tilde B^+
\end{align*}
We often use a concatenation shorthand: $$\phi_{(r_1,r_2)}(u_1,u_2;\underline{a}) = \phi_{(r_1,r_2)}^\sharp(u_1,u_2;\underline{a}) \diamond \phi_{(r_1,r_2)}^\flat(u_1,u_2;\underline{a}).$$

\begin{example} \label{atlas_example}
Let $G = PGL_2$ and $I = \{1\}$. The atlas group $\tilde G = SL_4$ is of type $A_3$, and $\tilde I = \{1^\sharp,1^\natural,1^\flat\}$, where $1^\natural$ is the middle vertex in the Dynkin diagram of $\tilde G$. In this example, we write down explicitly the atlas maps.

An element in $\mathcal O_{(1,1)}$ can be written as $(y_{1}(a),x_1(-c))\cdot\pi(b)$. We have $u_1 = y_1(a)$ and $u_2 = x_1(-c)$. Then $\iota(\tau_{1,-}(u_2))^{-1} = \sigma_{1,+}(u_1) = e$, $\tau_{1,+}(u_2)^{-1} = x_1(c)$ and $\iota(\sigma_{1,-}(u_1)) = x_1(a)$. The atlas map is
\[
\phi_{(1,1)}(y_1(a),x_1(-c);b)
=\left\{
\begin{array}{ll}
   \tilde y_{1^\natural}(b^{-1})\tilde \alpha^\vee_{1^\natural}(b)\tilde x_{1^\flat}(c)\tilde x_{1^\sharp}(a)\dot{\tilde s}_{1^\sharp}\dot{\tilde s}_{1^\flat}\tB^+/\tB^+, &\text{if }b\neq0, \\
    \dot{\tilde s}_{1^\natural}\tilde x_{1^\flat}(c)\tilde x_{1^\sharp}(a)\dot{\tilde s}_{1^\sharp}\dot{\tilde s}_{1^\flat}\tB^+/\tB^+, &\text{if }b=0.
\end{array}
\right.
\]

Using the identity $\tilde y_{1^\natural}(b^{-1})\tilde\alpha^\vee_{1^\natural}(b) = \tilde x_{1^\natural}(b)\dot{\tilde s}_{1^\natural}\tilde x_{1^\natural}(b^{-1})$, we see that the two expressions are compatible when $b\to 0$.

An element in $\mathcal O_{(s_1,1)}$ can be written as $(x_1(a)\dot s_1,x_1(-c))\cdot\pi(b)$. We have $u_1 = x_1(a)$ and $u_2 = x_1(-c)$. Then $\iota(\tau_{1,-}(u_2))^{-1} = e$, $\sigma_{s_1,+}(u_1)\dot s_1 = x_1(a)\dot s_1$, $\tau_{1,+}(u_2)^{-1} = x_1(c)$, and $\iota(\sigma_{s_1,-}(u_1)\dot s_1) = \dot s_1^{-1}$. The atlas map is
\[
\phi_{(s_1,1)}(x_1(a),x_1(-c);b)
{}=\left\{\begin{array}{ll}
    \tilde x_{1^\flat}(a)\dot{\tilde s}_{1^\flat}\tilde y_{1^\natural}(b^{-1})\tilde\alpha^\vee_{1^\natural}(b)\tilde x_{1^\flat}(c)\dot{\tilde s}_{1^\flat}\tilde B^+/\tilde B^+ &\text{if }b\neq0, \\
    \tilde x_{1^\flat}(a)\dot{\tilde s}_{1^\flat}\dot{\tilde s}_{1^\natural}\tilde x_{1^\flat}(c)\dot{\tilde s}_{1^\flat}\tilde B^+/\tilde B^+ & \text{if }b=0.
\end{array}\right.
\]

The atlas maps over $\mathcal O_{(1,s_1)}$ and $\mathcal O_{(s_1,s_1)}$ can be computed similarly. 

\end{example}

\subsection{Images of atlas maps}
In this subsection, we  determine the images of the atlas maps and show that the atlas maps are algebraic isomorphisms onto their images. We first examine some properties of the morphisms $\sigma_{r,\pm}$ and $\tau_{r,\pm}$.

It follows from the definitions that $$\sigma_{r_1,+}(\dr_1U^-\dr_1^{-1})\dr_1\subseteq U^+\dr_1,\ \left(\tau_{r_2,+}(\dr_2U^+\dr_2^{-1})\dr_2\right)^{-1} \subseteq (\dr_2U^+)^{-1} = U^+\dr_2^{-1}.$$ Let $\tilde B_{I^\natural}^+ = \tilde L_{I^\natural}\cap\tB^+$. We have $m(\mathbb C^I)\tilde B^+_{I^\natural}/\tilde B^+_{I^\natural} =  \tilde{B}^+_{I^\natural}\dot c_{I^\natural}\tilde B^+_{I^\natural}/\tilde B^+_{I^\natural}$. Therefore,
$$\phi^\flat_{(r_1,r_2)}\left(\CO_{(r_1,r_2)}\right)\subseteq (U^+\dr_1)^\flat(\tB^+_{I^\natural}\dot c_{I^\natural}\tB_{I^\natural}^+)(U^+\dr_2^{-1}\dot w_0)^\flat\tilde B^+/\tilde B^+.$$
Similarly, we have
$$\iota\left(\sigma_{r_1,-}(\dr_1U^-\dr_1^{-1})\dr_1\right)\subseteq\iota(\dr_1U^-) = U^+\dr_1^{-1},\ \iota\left(\tau_{r_2,-}(\dr_2U^+\dr_2^{-1})\dr_2\right)^{-1} \subseteq U^+\dr_2,$$
and thus,
$$\phi^\sharp_{(r_1,r_2)}\left(\CO_{(r_1,r_2)}\right)\subseteq(U^+\dr_2)^\sharp (\tB^+_{I^\natural}\dot c_{I^\natural}\tB_{I^\natural}^+)(U^+\dr_1^{-1}\dot w_0)^\sharp\tilde B^+/\tilde B^+.$$
By construction,
\begin{align*}
\phi_{(r_1,r_2)}(\CO_{(r_1,r_2)})
&\subseteq (U^+)^\flat(U^+)^\sharp\dot r_1^\flat\dot r_2^\sharp
\tilde B^+_{I^\natural}\dot c_{I^\natural}\tilde B^+_{I^\natural}(U^+)^\flat(U^+)^\sharp
(\dot r_1^{-1}\dot w_0)^\sharp(\dot r_2^{-1}\dot w_0)^\flat
\tB^+/\tB^+\\
&\subseteq\tB^+\dot r_1^\flat\dot r_2^\sharp\dot c_{I^\natural}
(\dot r_1^{-1}\dot w_0)^\sharp(\dot r_2^{-1}\dot w_0)^\flat
\tB^+/\tB^+=\mathring{\tilde{\mathcal B}}_{\varphi(\xi_{(r_1,r_2)})}.
\end{align*}

We prove the main result of this subsection.

\begin{proposition}\label{image_atlas_map}
For $r_1,r_2\in W$, the atlas map 
\[
\phi_{(r_1,r_2)}:\CO_{(r_1,r_2)}
\to
\mathring{\tilde{\mathcal B}}_{\varphi(\xi_{(r_1,r_2)})}
\]
is an isomorphism
\begin{proof}
From the identity $\tilde y_{i^\natural}(a^{-1})\tilde\alpha_{i^\natural}(a) = \tilde x_{i^\natural}(a)\dot{\tilde s}_{i^\natural}\tilde x_{i^\natural}(a^{-1})$, we see that if we define
\[
n(\underline{a}) = \prod_{i\in I}\tilde x_{i^\natural}(a)\dot{\tilde s}_{i^\natural},
\]
then $\phi_{(r_1,r_2)}(u_1,u_2;\underline{a})$ can be written as
\[
\bigl(\iota(\tau_{r_2,-}(u_2)\dot r_2)\bigr)^{-1,\sharp}\; \bigl(\sigma_{r_1,+}(u_1)\dot r_1 \bigr)^\flat\:n(\underline{a})\; \bigl(\tau_{r_2,+}(u_2)\dot r_2\bigr)^{-1,\flat}\; \bigl(\iota(\sigma_{r_1,-}(u_1)\dot r_1 )\bigr)^\sharp\;\dot w_0^\sharp \dot w_0^\flat\tilde B^+/\tilde B^+ . \bigr.
\]
Therefore $\phi_{(r_1,r_2)}$ is algebraic.

We construct the inverse map. Since \[
\ell(r_1^\flat r_2^\sharp c_{I^\natural}(r_1^{-1}w_0)^\sharp(r_2^{-1}w_0)^\flat) = \ell(r_1)+\ell(r_2)+\ell(c_{I^\natural})+\ell(r_1^{-1}w_0)+\ell(r_2^{-1}w_0),
\]
every element in $\tilde x\tilde B^+/\tilde B^+\in \mathring{\tilde{\mathcal B}}_{\varphi(\xi_{(r_1,r_2)})}$ can be uniquely written as 
\[\tilde x\tilde B^+/\tilde B^+=
(g_1\dot r_1)^\flat (g_2\dot r_2)^\sharp n(\underline{a})(\dot r_1^{-1}h_1)^\sharp(\dot r_2^{-1}h_2)^\flat\dot w_0^\sharp\dot w_0^\flat\tilde B^+/\tilde B^+
\]
for some $g_1\in \dot r_1U^-\dot r_1^{-1}\cap U^+$, $g_2\in\dot r_2U^-\dot r_2^{-1}\cap U^+$, $\underline{a}\in\mathbb C^I$, $h_1\in \dot r_1U^+\dot r_1^{-1}\cap U^+$, and $h_2\in \dot r_2U^+\dot r_2^{-1}\cap U^+$.

The morphisms \[
(\sigma_{r_1,+},\sigma_{r_1,-}):(\dot r_1U^-\dot r_1^{-1}\cap U^+)\times (\dot r_1U^-\dot r_1^{-1}\cap U^-)\to\dot r_1U^-\dot r_1^{-1},
\]
\[
(\tau_{r_2,-},\tau_{r_2,+}):(\dot r_2U^+\dot r_2^{-1}\cap U^-)\times (\dot r_2U^+\dot r_2^{-1}\cap U^+)\to\dot r_2U^+\dot r_2^{-1}
\]
are invertible. Let \[u_1 = (\sigma_{r_1,-},\sigma_{r_1,+})^{-1}(g_1,\iota(h_1)),\enspace u_2 = (\tau_{r_2,-},\tau_{r_2,+})^{-1}(\iota(g_2)^{-1},h_2^{-1}).\]
It is easy to check that $\phi_{(r_1,r_2)}^{-1}(\tilde x\tilde B^+/\tilde B^+) = (u_1,u_2;\underline{a})$. The conclusion follows.
\end{proof}

\end{proposition}

\subsection{Compatibility with stratification}
In this subsection, we consider the compatibility of the atlas map with the stratification of $\overline G$, and conclude that $\{\phi_{(r_1,r_2)}\}_{r_1,r_2\in W}$ forms a Bruhat atlas.

\begin{proposition} \label{compatible_strata}
For every $r_1,r_2\in W$ and $\alpha\diamond\beta\in\CQ$, $\phi_{(r_1,r_2)}$ restricts to an isomorphism
$$\phi_{(r_1,r_2)}: [\alpha\diamond\beta]\bigcap\CO_{(r_1,r_2)} \xrightarrow{\sim} \mathring{\tilde{\mathcal B}}_{\varphi(\alpha\diamond\beta),\varphi(\xi_{(r_1,r_2)})}.$$
\end{proposition}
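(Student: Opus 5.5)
Since $[\alpha\diamond\beta]=[\alpha]_+\cap[\beta]_-$ and $\mathring{\tilde{\mathcal B}}_{\tilde v,\tilde w}=\mathring{\tilde{\mathcal B}}_{\tilde w}\cap\mathring{\tilde{\mathcal B}}^{\tilde v}$, I will treat the two halves separately. By \Cref{image_atlas_map}, $\phi_{(r_1,r_2)}$ is an isomorphism of $\CO_{(r_1,r_2)}$ onto $\mathring{\tilde{\mathcal B}}_{\varphi(\xi_{(r_1,r_2)})}$. It then suffices to prove two statements:
\[
\phi_{(r_1,r_2)}\bigl(\textstyle\bigcup_{\alpha}[\alpha]_+\cap\CO_{(r_1,r_2)}\bigr)\ \text{is compatible with the } \tilde B^+\text{-orbits on }\tilde{\mathcal B},
\]
and, more precisely, that
\[
\phi_{(r_1,r_2)}([\beta]_-\cap\CO_{(r_1,r_2)})=\mathring{\tilde{\mathcal B}}^{\varphi_-\text{-type element}}\cap\mathring{\tilde{\mathcal B}}_{\varphi(\xi_{(r_1,r_2)})}
\]
for the $-$ stratification, with the $+$ stratification handled similarly. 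Combining the two gives the statement. Here I use that $\varphi(\alpha\diamond\beta)$ factors as a "$\flat$-part from $\beta$" times a "$\sharp$-part from $\alpha$", and that the $\sharp$ and $\flat$ letters commute away from $I^\natural$.

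To carry this out, I will use the factorization $\phi=\phi^\sharp\diamond\phi^\flat$. The $[\beta]_-=(B^-,B^-)(\dot v,\dot w)\cdot z_J$ condition should be detected purely by $\phi^\flat$, since $\phi^\flat$ involves $\sigma_{r_1,+}(u_1)$, $\tau_{r_2,+}(u_2)$ and $\underline a$. Symmetrically, $[\alpha]_+$ should be detected by $\phi^\sharp$, which involves the $U^-$-components passed through $\iota$. Concretely, for a point $p=(u_1\dot r_1,u_2\dot r_2)\cdot\pi(\underline a)$, I will write $u_1=g_1'g_2'$ with $g_1'\in U^-$. Then $(u_1\dot r_1,\cdot)\cdot z$ lies in the same $B^-\times B^-$-orbit as $(\sigma_{r_1,+}(u_1)\dot r_1,\ \tau_{r_2,+}(u_2)\dot r_2)\cdot\pi(\underline a)$, because the $U^-$ factors are absorbed. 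The zero set of $\underline a$ determines $J$, and the torus factor $t_{\underline a}$ is absorbed into $B^-$. This reduces the question to the smaller variety
\[
(U^+\dot r_1,\,U^+\dot r_2)\cdot z_J\cap[\beta]_-.
\]

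On the Kac--Moody side, $\phi^\flat$ lands in the Levi of $I^\flat\sqcup I^\natural$, which is of the type considered in \cite{CD03}. I will show that $(g\dot r_1,h\dot r_2)\cdot z_J\in(B^-,B^-)(\dot v,\dot w)\cdot z_J$ holds if and only if
\[
(g\dot r_1)^\flat\,n(\underline a)\,(\dot r_2^{-1}h^{-1})^\flat\,\dot w_0^\flat\ \in\ \tilde B^-\,\varphi_-(\beta)\,\tilde B^+ .
\]
The approach is to use the Bruhat decomposition of $(g\dot r_1,h\dot r_2)$ relative to $B^-$, and then to use the same reduced-expression bookkeeping with $c_{(I\setminus J)^\natural}$ commuting with $W_J^\flat$ that appears in the proof of \Cref{combinatorial_atlas}. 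The $\sharp$ side is the same argument after applying $\iota$, which swaps $U^\pm$ and turns $B^+$-orbits into $B^-$-type statements. This matches the appearance of $\iota$ in $\phi^\sharp$ and the form $\varphi_+(J;x,y)=y^\sharp c\,(x^{-1}w_0)^\sharp$.

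Once set-theoretic equality is known, the isomorphism statement follows automatically: we are restricting an isomorphism to locally closed subvarieties that correspond. Nonemptiness is also consistent, since by \Cref{atoms} the intersection $[\alpha\diamond\beta]\cap\CO_{(r_1,r_2)}$ is nonempty exactly when $\xi_{(r_1,r_2)}\le_{\CQ}\alpha\diamond\beta$. By order reversal this is the condition $\varphi(\alpha\diamond\beta)\le\varphi(\xi_{(r_1,r_2)})$.

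I expect the main obstacle to be the one-sided orbit identification: showing that the $B^-\times B^-$-orbit of a point in $\overline G$ at boundary level $J$ corresponds exactly to the $\tilde B^-$-orbit in $\tilde{\mathcal B}$. The difficulty is the nonuniqueness modulo $W_J$, namely the "exists $u\in W_J$" in $\le_-$, together with the interaction of $L_J$ with the central quotient. I would first try to verify this on torus-fixed points $(\dot x,\dot y)\cdot z_J$. I would then propagate it using equivariance: $\phi^\flat$ intertwines left multiplication by $U^+$ on the $\flat$ side with the first $G$-factor, and right multiplication with the second factor. After that, I would count dimensions, using the dimension formula of \cite{HL11}, to confirm that the two strata match.
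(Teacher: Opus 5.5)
Your reduction matches the paper's. You replace $u_1,u_2$ by $\sigma_{r_1,+}(u_1),\tau_{r_2,+}(u_2)$ by absorbing the $U^-$-factors, read off $[\beta]_-$ from $\phi^\flat_{(r_1,r_2)}$ and $[\alpha]_+$ from $\phi^\sharp_{(r_1,r_2)}$, and use that $c_{(I\setminus J)^\natural}$ commutes with the $J$-part.

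The gap is the step you yourself call the main obstacle. The exact identification of the $B^-\times B^-$-orbit with the $\tilde B^-$-orbit is left unproved, and your plan for it would not work:
\begin{itemize}
\item Fixed points do not reach most strata. The points $(\dot x,\dot y)\cdot z_J$ with $x\in W^J$ lie only in the strata $[J;x,y;x,y]$, and the only $T\times T$-fixed point of $\CO_{(r_1,r_2)}$ is $(\dot r_1,\dot r_2)\cdot z_\emptyset$.
\item There is no group to propagate along. Left multiplication by $U^+$ does not preserve $[\beta]_-$, so $U^+$-equivariance cannot transport membership in $[\beta]_-$. The unipotent elements that do preserve $[\beta]_-\cap\CO_{(r_1,r_2)}$ leave $\sigma_{r_1,+}(u_1)$, $\tau_{r_2,+}(u_2)$ and $\underline a$ unchanged, so $\phi^\flat_{(r_1,r_2)}$ is constant along them.
\item A dimension count cannot upgrade an inclusion into an open Richardson variety to an equality.
\end{itemize}
Combining the two halves is also not formal. $\varphi(\alpha\diamond\beta)$ is not a product of $\varphi_+(\alpha)$ and $\varphi_-(\beta)$, since they share the single factor $c_{(I\setminus J)^\natural}$. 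You have to run the computation on the whole product defining $\phi_{(r_1,r_2)}$, using that the derived subgroups of $\tilde L_{I^\sharp}$ and $\tilde L_{I^\flat}$ commute.

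The missing observation is that the converse is unnecessary. It suffices to prove $\phi_{(r_1,r_2)}([\alpha\diamond\beta]\cap\CO_{(r_1,r_2)})\subseteq\mathring{\tilde{\mathcal B}}_{\varphi(\alpha\diamond\beta),\varphi(\xi_{(r_1,r_2)})}$ for every $\alpha\diamond\beta\in\CQ$. Three facts then force every inclusion to be an equality, and the restriction is then an isomorphism:
\begin{itemize}
\item the strata partition $\CO_{(r_1,r_2)}$;
\item $\phi_{(r_1,r_2)}$ is a bijection onto $\mathring{\tilde{\mathcal B}}_{\varphi(\xi_{(r_1,r_2)})}$ by \Cref{image_atlas_map};
\item the target cells are pairwise disjoint because $\varphi$ is injective.
\end{itemize}

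The forward inclusion is a direct group computation in which the $L_J$-ambiguity resolves itself:
\begin{itemize}
\item Membership in $[\beta]_-$ gives $\sigma_{r_1,+}(u_1)\dot r_1t_{\underline a}\in B^-\dot v\UPJp l$ and $\tau_{r_2,+}(u_2)\dot r_2\in B^-\dot w\UPJm l$ with the \emph{same} $l$ in the derived subgroup of $L_J$; this is the shape of the stabilizer of $z_J$.
\item $l^\flat$ commutes with $\dot c_{(I\setminus J)^\natural}$, so it cancels inside $\phi^\flat_{(r_1,r_2)}$.
\item $\dot c_{(I\setminus J)^\natural}(\UPJm)^\flat\dot c_{(I\setminus J)^\natural}^{-1}$ lies in $\tilde U_{\tilde P^-_{I^\flat}}\cap\tilde U^-_{I^\flat\cup I^\natural}$, which is normalized by $\tilde L_{I^\flat}$ and hence absorbed into $\tB^-$.
\item $(\UPJp)^\flat$ is pushed to the right into $\tB^+$.
\item $t_{\underline a}$ is matched with $t^\natural_{\underline a}$, because $t_{\underline a}^{\flat,-1}t^\natural_{\underline a}$ centralizes the $\flat$ root subgroups.
\end{itemize}

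Finally, your nonemptiness check is circular. The criterion ``$[\alpha\diamond\beta]\cap\CO_{(r_1,r_2)}\neq\emptyset$ iff $\xi_{(r_1,r_2)}\le_{\CQ}\alpha\diamond\beta$'' is \Cref{nonempty_intersection}, which the paper deduces from this very proposition together with \Cref{atoms}. \Cref{atoms} alone is purely combinatorial.
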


\begin{proof}
Consider $\alpha = (J;x,y)$ and $\beta = (J;v,w)$. As in \S\ref{sec:atlas_map}, we write an element in $\CO_{(r_1,r_2)}$ as $(u_1,u_2;\underline{a})$, where $u_1\in \dr_1U^-\dr_1^{-1}$, $u_2\in\dr_2U^+\dr_2^{-1}$ and $\underline{a}\in\mathbb C^I$. 

Suppose that $(u_1,u_2;\underline{a})\in [\beta]_-$. From the description of strata in \S\ref{sec:wonderful_strata}, we know that there exists $l$ in the derived subgroup of $L_J$, such that
$$u_1\dr_1t_{\underline a}\in B^-\dot v\UPJp l,\ u_2\dr_2\in B^-\dot w\UPJm l.$$
By definition, we have $\sigma_{r_1,+}(u_1)\in U^-u_1$ and $\tau_{r_2,+}(u_2)\in U^-u_2$, and thus,
$$\sigma_{r_1,+}(u_1)\dr_1t_{\underline a}\in B^-\dot v\UPJp l,\ \tau_{r_2,+}(u_2)\dr_2\in B^-\dot w\UPJm l.$$

Let $t^\natural_{\underline a} = \prod_{i\in J}\tilde\alpha_{i^\natural}^\vee(a_i)$. Then $m(\underline a) = \prod_{i\in J}\tilde y_{i^\natural}(a_i^{-1})t^\natural_{\underline a}\dot c_{(I\setminus J)^\natural}$. It follows from the construction of $\tilde G$ that
$$(\sigma_{r_1,+}(u_1)\dot r_1)^\flat \left(\prod_{i\in J}\tilde y_{i^\natural}(a_i^{-1})\right)(\sigma_{r_1,+}(u_1)\dot r_1)^{\flat,-1}\in\tilde U^-,$$
$$t_{\underline a}^{\flat,-1} t_{\underline a}^\natural (\tau_{r_2,+}(u_2)\dot r_2)^{-1,\flat}t_{\underline a}^\flat t_{\underline a}^{\natural,-1} = (\tau_{r_2,+}(u_2)\dot r_2)^{-1,\flat},$$
$$\dot c_{(I\setminus J)^\natural}l^\flat  = l^\flat\dot c_{(I\setminus J)^\natural}.$$
From these properties, we have
\begin{align*}
   \phi_{(r_1,r_2)}^\flat(u_1,u_2;\underline{a})&=\left(\sigma_{r_1,+}(u_1)\dr_1\right)^\flat m(\underline{a})\left(\tau_{r_2,+}(u_2)\dr_2\right)^{-1,\flat}\dot w_0^\flat\tilde B^+/\tilde B^+
   \\ &\in \tU^-(B^-\dot v U_{P_J^+})^\flat\dot c_{(I\setminus J)^\natural}(U_{P_J^-}\dot w^{-1})^\flat\dot w_0^\flat\tilde B^+/\tilde B^+ \\
   &\subseteq \tB^-(\dot v U_{P_J^+})^\flat\left(\dot c_{(I\setminus J)^\natural}(\UPJm)^\flat\dot c_{(I\setminus J)^\natural}^{-1}\right)\dot c_{(I\setminus J)^\natural}(\dot w^{-1}\dot w_0)^\flat \tilde B^+/\tilde B^+ \\
   & \subseteq \tB^-(\dot v U_{P_J^+})^\flat(\tilde{U}_{\tilde P_{I^\flat}^-}\cap\tilde U^-_{I^\flat\cup I^\natural})\dot c_{(I\setminus J)^\natural}(\dot w^{-1}\dot w_0)^\flat\tilde B^+/\tilde B^+ \\
   & \subseteq\tB^-(\dot v\UPJp)^\flat\dot c_{(I\setminus J)^\natural}(\dot w^{-1}\dot w_0)^\flat\tilde B^+/\tilde B^+.\\
   & \subseteq\tB^-\dot v^\flat\dot c_{(I\setminus J)^\natural}(\dot w^{-1}\dot w_0)^\flat\tB^+/\tB^+. 
\end{align*}
Then, we suppose that $(u_1,u_2;\underline{a})\in [\alpha]_+$. By a similar argument, we also have that, there exists $l'$ in the derived subgroup of $L_J$, such that
$$\iota(\tau_{r_2,-}(u_2)\dr_2)^{-1}\in B^-\dot yU_{P_J^+}\iota(l')^{-1},\ \iota(\sigma_{r_1,-}(u_1)\dr_1t_{\underline a})\in \iota(l')U_{P_J^-}\dot x^{-1}B^-,$$
and thus,
$\phi^\sharp_{(r_1,r_2)}(u_1,u_2;\underline{a})\in\tU^-\dot y^\sharp\dot c_{(I\setminus J)^\natural}(\dot x^{-1}\dot w_0)^\sharp\tB^+/\tB^+.$
By our construction of $\phi_{(r_1,r_2)}$, we have
$$\phi_{(r_1,r_2)}\left([\alpha\diamond\beta]\bigcap\CO_{(r_1,r_2)}\right)\subseteq \mathring{\tilde{\mathcal B}}_{\varphi(\alpha\diamond\beta),\varphi(\xi_{(r_1,r_2)})}.$$
Now, we have
\begin{align*}
\phi_{(r_1,r_2)}\left(\CO_{(r_1,r_2)}\right) &= \phi_{(r_1,r_2)}\left(\bigsqcup_{\alpha\diamond\beta\in\CQ}\left([\alpha\diamond\beta]\bigcap\CO_{(r_1,r_2)}\right)\right)\\
&\subseteq\bigsqcup_{\alpha\diamond\beta\in\CQ}\mathring{\tilde{\mathcal B}}_{\varphi(\alpha\diamond\beta),\varphi(\xi_{(r_1,r_2)})}\\
&=\mathring{\tilde{\CB}}_{\varphi(\xi_{(r_1,r_2)})}.
\end{align*}
We conclude from \Cref{image_atlas_map} that the inclusion above is actually equality. This completes the proof.
\end{proof}

\begin{remark}
Similar to the discussion on the torus elements in \S\ref{sec:atlas_group}, for $l\in L_J$, there is no canonical choice of elements $l^\sharp$ (resp. $l^\flat$). However, two different choices only differ by a central element in the derived subgroup of $\tilde L_{I^\sharp}$ (resp. $\tilde L_{I^\flat}$), and the argument is valid.
\end{remark}

Together with \Cref{atoms}, we obtain

\begin{corollary} \label{nonempty_intersection}
For every $r_1,r_2\in W$ and $\alpha\diamond\beta = (J;x,y;v,w)\in\CQ$, we have $[\alpha\diamond\beta]\cap\CO_{(r_1,r_2)}\neq\emptyset$ if and only if there exist $z_1,z_2\in W_J$, such that
$vz_2\leq r_1\leq xz_1$, and $yz_1\leq r_2\leq wz_2$.
\end{corollary}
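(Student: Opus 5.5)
The plan is to reduce the question to nonemptiness of an open Richardson variety, translate that into a Bruhat inequality in $\tilde W$, and then pull the inequality back to $\CQ$ via \Cref{combinatorial_atlas} and \Cref{atoms}.

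\textbf{Step 1 (reduction to a Richardson variety).} By \Cref{compatible_strata}, the atlas map $\phi_{(r_1,r_2)}$ restricts to an isomorphism
\[
[\alpha\diamond\beta]\cap\CO_{(r_1,r_2)}\xrightarrow{\sim}\mathring{\tilde{\mathcal B}}_{\varphi(\alpha\diamond\beta),\varphi(\xi_{(r_1,r_2)})}.
\]
Hence the left side is nonempty if and only if this open Richardson variety is nonempty. By the standard criterion recalled above, that holds if and only if $\varphi(\alpha\diamond\beta)\leq\varphi(\xi_{(r_1,r_2)})$ in $\tilde W$.

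\textbf{Step 2 (pulling back to $\CQ$).} By \Cref{combinatorial_atlas}, $\varphi$ restricted to $\CQ$ is an order-reversing embedding. Both $\alpha\diamond\beta$ and $\xi_{(r_1,r_2)}$ lie in $\CQ$: the latter is nonempty because $r_1\le r_1$ and $r_2\le r_2$.

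The one point needing care is that we must use order-reflection, i.e.\ the statement that $\varphi(q')\ge\varphi(q)$ implies $q'\le q$. This is not automatic from the word ``embedding''. It does follow from the cited \cite[Theorem 2.3]{CD03}, since $\varphi_\pm$ are order-reversing isomorphisms onto their images. The inequality for $\varphi$ decomposes into the $\sharp$-part and the $\flat$-part, which correspond to $\le_+$ and $\le_-$ respectively; this is exactly how $\leq_{\CA\diamond\CA}$ is defined.

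Granting order-reflection, we get
\[
\varphi(\alpha\diamond\beta)\leq\varphi(\xi_{(r_1,r_2)})
\iff
\xi_{(r_1,r_2)}\leq_{\CQ}\alpha\diamond\beta .
\]

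\textbf{Step 3 (combinatorial description).} Finally, \Cref{atoms} translates $\xi_{(r_1,r_2)}\leq_{\CQ}\alpha\diamond\beta$ into the existence of $z_1,z_2\in W_J$ with $vz_2\leq r_1\leq xz_1$ and $yz_1\leq r_2\leq wz_2$. This is exactly the claimed condition.

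The main obstacle is the order-reflection in Step 2, that is, making sure ``order-reversing embedding'' means an isomorphism of posets onto the image with the reversed order. I will justify it via the product decomposition of $\varphi$ into its commuting $\flat$ and $\sharp$ factors. I will also check that this decomposition is compatible with the Bruhat order on the convex image $\bigcup_K \tilde W_{I^\flat\sqcup I^\sharp}c_{K^\natural}\tilde W_{I^\flat\sqcup I^\sharp}$, as asserted just before \Cref{combinatorial_atlas}.
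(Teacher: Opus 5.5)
Your proposal is correct and follows the paper's own route: the corollary is obtained by using \Cref{compatible_strata} to reduce nonemptiness to $\varphi(\alpha\diamond\beta)\leq\varphi(\xi_{(r_1,r_2)})$, then the order-reversing embedding of \Cref{combinatorial_atlas} to turn this into $\xi_{(r_1,r_2)}\leq_{\CQ}\alpha\diamond\beta$, and finally \Cref{atoms}. Your concern about order-reflection is a fair point, but the paper takes ``order-reversing embedding'' from \cite[Theorem 2.3]{CD03} to mean an order anti-isomorphism onto the image, so nothing beyond the cited result is needed.
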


\section{Compatibility with total positivity}\label{section:positivity}

In this section, we prove that the Bruhat atlas constructed in
\S\ref{section:bruhat_atlas} is compatible with total positivity.
We use the cell decomposition in
\cite{He04} to show that every chart meeting a stratum contains
its nonnegative part, and then we identify the image of the top positive cell. A general positive-atlas argument gives the closure relations of totally nonnegative cells and the manifold structure for the closure of every cell.

\subsection{Remarkable polyhedral spaces and regularity property}
Lusztig described the totally nonnegative flag variety as a
remarkable polyhedral space. Based on later works, we provide the following definition. 

\begin{definition}
Let $X=\bigsqcup_{\gamma}X_\gamma$ be a stratification of a real
algebraic variety, or of an ind-variety with finite-dimensional strata. Let $X_{\ge0}\subseteq X(\mathbb R)$ be a
closed connected subset, and set
$X_{\gamma,>0}=X_{\ge0}\cap X_\gamma(\mathbb R)$.
We call $X_{\ge0}=\bigsqcup_\gamma X_{\gamma,>0}$ a
\emph{remarkable polyhedral space} if the following conditions
hold:
\begin{enumerate}
    \item (Connected components) $X_{\gamma,>0}$ is either empty or a connected component of $X_\gamma(\mathbb R)$;
    \item (Cell structure) If $X_{\gamma,>0} \neq \emptyset$, then $X_{\gamma,>0} \simeq\mathbb R_{>0}^{d_\gamma}$ is a semi-algebraic cell, where $d_\gamma=\dim_{\mathbb R} X_\gamma$;
    \item (Cellular decomposition) The Hausdorff closure $\overline{X_{\gamma,>0}}$ is the disjoint union of totally nonnegative cells $\bigsqcup_{\gamma'\leq\gamma} X_{\gamma',>0}$, where $\gamma'\leq\gamma$ if $X_{\gamma'}\subseteq\overline{X_{\gamma}}$.
\end{enumerate}
We say that $X_{\ge0}$ has the \emph{regularity property} if,
for every nonempty $X_{\gamma,>0}$, there is a homeomorphism from a closed ball to $\overline{X_{\gamma,>0}}$ that takes the interior onto $X_{\gamma,>0}$.

\end{definition}

Examples of remarkable polyhedral spaces with the regularity
property include:
\begin{itemize}
    \item totally nonnegative (partial) flag varieties \cite{GKL22, BH24};
    \item totally nonnegative twisted flag varieties \cite{HX26};
    \item totally nonnegative double flag varieties \cite{BH22, HX26};
    \item links of the identity in totally nonnegative reduced double Bruhat cells \cite{Her14, BH22,HX26};
    \item totally nonnegative matroid Schubert varieties \cite{HSX25}.
\end{itemize}

We will use the regularity theorem for totally nonnegative
Kac--Moody flag varieties in \cite{BH24}. The \emph{totally nonnegative flag variety} is defined to be
\[
\tilde{\mathcal B}_{\ge0}
 :=\overline{\tilde G_{\ge0}\tilde B^+/\tilde B^+}
  =\overline{\tilde U^-_{\ge0}\tilde B^+/\tilde B^+}\text{  (Hausdorff closure).}
\]
For $\tilde v\leq\tilde w$, set
$\tilde{\mathcal B}_{\tilde v,\tilde w,>0}
 :=\tilde{\mathcal B}_{\ge0}
   \cap\mathring{\tilde{\mathcal B}}_{\tilde v,\tilde w}(\mathbb R).$

\begin{theorem}[{\cite[Theorem B \& Theorem D]{BH24}}]
The totally nonnegative flag variety
\[
\tilde{\mathcal B}_{\ge0}
 =\bigsqcup_{\tilde v\leq\tilde w}
   \tilde{\mathcal B}_{\tilde v,\tilde w,>0}
\]
is a remarkable polyhedral space with regularity property.
\end{theorem}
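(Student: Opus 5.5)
This statement is imported from \cite{BH24}, so within this paper no new argument is needed: we simply invoke \cite[Theorem~B \& Theorem~D]{BH24}. Since it carries all the positivity input for the atlas, here is how we would reconstruct it if asked, in the order Bao--He do.

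\emph{Cells and connected components.} First fix $\tilde v\le\tilde w$ and a reduced expression of $\tilde w$. We would use the Kac--Moody version of the Marsh--Rietsch parametrization: the positive distinguished subexpression of $\tilde v$ inside it gives a map $\mathbb R_{>0}^{\ell(\tilde w)-\ell(\tilde v)}\to\mathring{\tilde{\mathcal B}}_{\tilde v,\tilde w}(\mathbb R)$, which is a homeomorphism onto its image. We would then show that this image equals $\tilde{\mathcal B}_{\tilde v,\tilde w,>0}$. One inclusion comes from writing elements of $\tilde U^-_{\ge0}$ as products of $\tilde y_{\tilde i}(a)$ and taking limits. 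The other comes from Lusztig's positivity of canonical bases, applied to the highest weight modules of $\tilde G$. Positivity of these coordinates, as a semi-algebraic condition that cuts out a connected open piece, gives condition~(1).

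\emph{Closure relations.} Next we would prove $\overline{\tilde{\mathcal B}_{\tilde v,\tilde w,>0}}=\bigsqcup_{\tilde v\le\tilde v'\le\tilde w'\le\tilde w}\tilde{\mathcal B}_{\tilde v',\tilde w',>0}$. The inclusion ``$\subseteq$'' follows from the Zariski closure formula for $\tilde{\mathcal B}_{\tilde v,\tilde w}$ recalled above. For ``$\supseteq$'' we would degenerate parameters in the Marsh--Rietsch chart to reach each codimension-one cell. This uses the action of $\tilde U^\pm_{\ge0}$ and the fact that Bruhat intervals are generated by covering relations.

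\emph{Regularity.} This is the main obstacle. Following Bao--He, we would first establish a \emph{product structure}: near a point of $\tilde{\mathcal B}_{\tilde v',\tilde w',>0}$, a chart of the form $\tilde U^-\dot{\tilde v'}\tilde B^+/\tilde B^+$ identifies a neighborhood of that point in $\overline{\tilde{\mathcal B}_{\tilde v,\tilde w,>0}}$ with
\[
\tilde{\mathcal B}_{\tilde v',\tilde w',>0}\times(\text{a cone over a link}).
\]
The link is itself a totally nonnegative space indexed by Bruhat intervals. This shows that $\overline{\tilde{\mathcal B}_{\tilde v,\tilde w,>0}}$ is a topological manifold with boundary whose interior is the open cell. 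Separately, Bruhat intervals of any Coxeter group are thin and EL-/CL-shellable (Björner--Wachs, Dyer), so the boundary face posets are spheres combinatorially. We would then argue by induction on $\ell(\tilde w)-\ell(\tilde v)$. The boundary is a regular CW complex with shellable face poset, hence a sphere, and the closure is a compact contractible manifold with sphere boundary. The generalized Poincar\'e conjecture, in the form of the criterion recalled in \S\ref{section:regularity}, then makes the closure a closed ball. The hardest point is the product structure, specifically the compatibility of the local chart with positivity. We would handle this by the Bao--He trick of using the $J$-map and the monoid action of $\tilde G_{\ge0}$ to move any point to a standard position.
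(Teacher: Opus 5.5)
Your proposal agrees with the paper: the paper gives no argument of its own and simply imports this result from \cite[Theorem~B \& Theorem~D]{BH24}, exactly as you do. If you ever expand the optional reconstruction, fix two points. First, $\tilde U^-\dot{\tilde v}'\tilde B^+/\tilde B^+=\mathring{\tilde{\mathcal B}}^{\tilde v'}$ is an opposite Schubert cell, not an open chart; use $\dot{\tilde v}'\tilde U^-\tilde B^+/\tilde B^+$ as in \Cref{inside_open_flag}. Second, the augmented face poset of $\overline{\tilde{\mathcal B}_{\tilde v,\tilde w,>0}}$ is the poset of subintervals of $[\tilde v,\tilde w]$ with $\hat0$ adjoined, not a Bruhat interval, so Bj\"orner--Wachs and Dyer do not by themselves give its shellability: the intervals starting at $\hat0$ need a separate labeling (Williams' theorem), and this is precisely the issue \S\ref{section:combinatorics} of this paper handles for $\hat{\mathcal Q}$.
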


\subsection{Positive atlases}\label{sec:positive_atlas}

Let $M$ be a finite-dimensional irreducible real algebraic
variety with a finite stratification
$M=\bigsqcup_{y\in\mathcal Y}\mathring M_y$. Let $M_{\ge0}\subseteq M(\mathbb R)$ be a connected closed subset of $M(\mathbb R)$. For $y\in\CY$, set $M_{y,>0}:= M_{\ge0}\cap\mathring M_y$. Let $y_0\in\CY$ such that $\mathring M_{y_0}$ is the open stratum. We assume that $M_{\ge0} = \overline{M_{y_0,>0}}$.

We call the Bruhat atlas a \emph{positive atlas} on
$(M,\mathcal Y,M_{\ge0})$ if
\begin{itemize}
\item $M_{y,>0}\subseteq U_f(\mathbb R)$ whenever
$U_f\cap\mathring M_y\neq\emptyset$;
\item for every $f$, the image $\phi_f(M_{y_0,>0})$ is the
totally nonnegative cell in the open Richardson variety
$\phi_f(U_f\cap\mathring M_{y_0})$.
\end{itemize}
A positive atlas transfers the
local topological properties from
$\tilde{\mathcal B}_{\ge0}$ to $M_{\ge0}$, as shown in the following proposition.

\begin{proposition}\label{positive_atlas}
Suppose that $\{\phi_f\}$ is a positive atlas on
$(M,\mathcal Y,M_{\ge0})$. Then every $M_{y,>0}$ is nonempty,
and
$M_{\ge0}=\bigsqcup_{y\in\mathcal Y}M_{y,>0}$ is a
remarkable polyhedral space. Moreover, for every $y$, the
Hausdorff closure $\overline{M_{y,>0}}$ is a topological
manifold with boundary
\[
\partial\overline{M_{y,>0}}
   =\bigsqcup_{y'<y}M_{y',>0}.
\]
\end{proposition}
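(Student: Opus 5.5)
The plan is to transport everything through the charts and use the fact that each chart already sees a whole neighbourhood of the cells it meets.

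\textbf{Step 1: the chart images of the cells.} Fix $f$ and write $\phi_f(U_f)=\mathring{\tilde{\mathcal B}}_{\tilde w_f}$, with $\phi_f(U_f\cap\mathring M_y)=\mathring{\tilde{\mathcal B}}_{\tilde v_y,\tilde w_f}$. I would show that $\phi_f$ carries $U_f(\mathbb R)\cap M_{\ge0}$ homeomorphically onto $U_f(\mathbb R)\cap\phi_f^{-1}\bigl(\tilde{\mathcal B}_{\ge0}\bigr)$, that is, onto $\bigsqcup_{\tilde v\le \tilde w_f}\tilde{\mathcal B}_{\tilde v,\tilde w_f,>0}$. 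Since $\phi_f$ is an algebraic isomorphism onto an open set, it is a homeomorphism on real points. The second axiom identifies $\phi_f(M_{y_0,>0})$ with $\tilde{\mathcal B}_{\tilde v_{y_0},\tilde w_f,>0}$. Taking Hausdorff closures inside $U_f(\mathbb R)$ and using $M_{\ge0}=\overline{M_{y_0,>0}}$, we get $\phi_f(M_{\ge0}\cap U_f)=\overline{\tilde{\mathcal B}_{\tilde v_{y_0},\tilde w_f,>0}}\cap\mathring{\tilde{\mathcal B}}_{\tilde w_f}$. By the cellular decomposition in the theorem of \cite{BH24}, this is $\bigsqcup_{\tilde v_{y_0}\le\tilde v\le\tilde w_f}\tilde{\mathcal B}_{\tilde v,\tilde w_f,>0}$. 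Intersecting with strata gives $\phi_f(M_{y,>0}\cap U_f)=\tilde{\mathcal B}_{\tilde v_y,\tilde w_f,>0}$ whenever $U_f\cap\mathring M_y\ne\emptyset$. The first axiom says $M_{y,>0}\subseteq U_f$, so in fact $\phi_f(M_{y,>0})=\tilde{\mathcal B}_{\tilde v_y,\tilde w_f,>0}$. This is nonempty because $\tilde v_y\le\tilde w_f$, and it is a cell of dimension $\ell(\tilde w_f)-\ell(\tilde v_y)=\dim\mathring M_y$.

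\textbf{Step 2: the first two axioms.} Every $\mathring M_y$ meets some $U_f$, since the charts cover $M$. Step 1 then gives nonemptiness and the cell structure. For the connected-component axiom, $\mathring M_y\cap U_f$ is isomorphic to an open Richardson variety, so $M_{y,>0}$ is a connected component of $(\mathring M_y\cap U_f)(\mathbb R)$. It remains to see that it is a component of all of $\mathring M_y(\mathbb R)$. It is closed in $\mathring M_y(\mathbb R)$, because $M_{\ge0}$ is closed. It is open in $\mathring M_y(\mathbb R)$, because it is open in the open subset $(\mathring M_y\cap U_f)(\mathbb R)$. Being connected, it is therefore a component.

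\textbf{Step 3: closure relations.} The main obstacle is to show $\overline{M_{y,>0}}=\bigsqcup_{y'\le y}M_{y',>0}$ globally, with $\le$ the Zariski closure order. The inclusion ``$\subseteq$'' is immediate: $\overline{M_{y,>0}}\subseteq\overline{\mathring M_y}\cap M_{\ge0}$.

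For ``$\supseteq$'', take $y'\le y$ and choose $f$ with $U_f\cap\mathring M_{y'}\neq\emptyset$. Since $U_f$ is open and $\mathring M_{y'}\subseteq\overline{\mathring M_y}$, the chart also meets $\mathring M_y$. Zariski closures of strata inside $U_f$ correspond to Richardson closures, so $\tilde v_y\le\tilde v_{y'}\le\tilde w_f$. Now apply \cite{BH24} inside $\mathring{\tilde{\mathcal B}}_{\tilde w_f}$: the closure of $\tilde{\mathcal B}_{\tilde v_y,\tilde w_f,>0}$ contains $\tilde{\mathcal B}_{\tilde v_{y'},\tilde w_f,>0}$. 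Pulling this back through $\phi_f$ gives $M_{y',>0}\subseteq\overline{M_{y,>0}}$. In particular the closure is a union of cells, which is the cellular-decomposition axiom.

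\textbf{Step 4: manifold with boundary.} Take a point $p\in M_{y',>0}$ with $y'\le y$, and choose a chart $U_f$ containing $M_{y',>0}$. Then $\phi_f$ identifies $\overline{M_{y,>0}}\cap U_f$ with $\overline{\tilde{\mathcal B}_{\tilde v_y,\tilde w_f,>0}}\cap\mathring{\tilde{\mathcal B}}_{\tilde w_f}$. By \cite{BH24} the set $\overline{\tilde{\mathcal B}_{\tilde v_y,\tilde w_f,>0}}$ is a closed ball whose interior is the open cell. Hence this intersection is an open subset of a closed ball containing the top-dimensional point $\phi_f(p)$.

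Consequently a neighbourhood of $p$ is a manifold with boundary. The point $p$ is an interior point exactly when $\phi_f(p)$ lies in the open cell, that is, when $y'=y$. Therefore $\partial\overline{M_{y,>0}}=\bigsqcup_{y'<y}M_{y',>0}$.
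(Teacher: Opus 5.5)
Your proposal is correct and follows essentially the same route as the paper's proof. You identify $\phi_f(M_{y,>0})$ with the nonnegative Richardson cell by closing up $\phi_f(M_{y_0,>0})$ inside the chart and intersecting with strata, obtain the connected-component property from the open-and-closed argument, and transfer the closure relations and the ball/boundary structure from \cite{BH24} through the charts; the only inessential imprecisions are that $\mathring{\tilde{\mathcal B}}_{\tilde w_f}$ is open in the Schubert variety rather than in $\tilde{\mathcal B}$, and that a common top element $\tilde w_f$ holds for this paper's atlas but not for the abstract definition, and neither affects the argument.
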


\begin{proof}
We denote $\bar\phi_f(y)$ a Bruhat interval of $\tilde W$ such that $\phi_f(\mathring M_y\cap U_f) =\mathring{\tilde\CB}_{\bar\phi_f(y)}$. For $y\in\CY$, we take $f\in F$ such that $\mathring M_y\cap U_f\neq\emptyset$. By assumption, we have $\phi_f(M_{y_0,>0}) =\tilde\CB_{\bar\phi_f(y_0),>0}$. Since $\phi_f$ is a stratified embedding, we have
$$\phi_f(M_{y,>0})=\phi_f\left(\overline{M_{y_0,>0}}\bigcap\mathring M_y\right) = \overline{\phi_f(M_{y_0,>0})}\bigcap \mathring{\tilde\CB}_{\bar\phi_f(y)} = \tilde\CB_{\bar\phi_f(y),>0}.$$
Therefore, $M_{y,>0}$ is a connected component of $\mathring M_y\cap U_f$ and a semi-algebraic cell. We also have $$\dim M_{y,>0} = \dim \tilde\CB_{\bar\phi_f(y)}=\dim(\mathring M_y\cap U_f).$$
Since $\{U_f\}_{f\in F}$ is an open covering of $M$, we have $\dim\mathring M_y(\mathbb R) = \dim M_{y,>0}$.

Since $U_f$ is open in $M$, we also have $M_{y,>0}$ is open in $\mathring M_y(\mathbb R)$. Since $M_{y,>0} = \mathring M_y\cap M_{\ge0}$ is closed, $M_{y,>0}$ is a connected component of $\mathring M_y(\mathbb R)$.

For $y,y'\in\CY$ such that $y'\leq y$, since $\{U_f\}_{f\in F}$ is an open covering of $M$, there is an $f\in F$ such that $U_f\cap\mathring M_{y'}\neq\emptyset$. Since $\mathring M_{y'}$ is contained in the Zariski closure of $\mathring M_y$, we have $U_f\cap\mathring M_y\neq\emptyset$. 
Since $\tilde\CB_{\ge0}$ is a remarkable polyhedral space, we have 
$$\phi_f(M_{y',>0})\subseteq \phi_f\left(\overline{M_{y,>0}}\bigcap U_f\right),$$
which shows that $M_{y',>0}\subseteq\overline{M_{y,>0}}$. Since $\overline{\mathring{M}}_y = \bigsqcup_{y'\leq y}\mathring{M}_{y'}$, we have $\overline{M_{y,>0}}\subseteq\bigsqcup_{y'\leq y}M_{y',>0}$. This verifies the closure relation condition.

Finally, since $\overline{\phi_f(M_{y,>0})}$ is a closed ball with interior $\phi_f(M_{y,>0})$, the collection $\left(\overline{M_{y,>0}}\cap U_f,\phi_f\right)_{f\in F}$ gives an Euclidean chart on $\overline{M_{y,>0}}$, making it a topological manifold, with $M_{y,>0}$ lying in the interior. For $y'<y$, we have 
$$\phi_f(M_{y',>0})\subseteq\partial\overline{\phi_f(M_{y,>0})} = \partial\left(\phi_f\left(\overline{M_{y,>0}}\bigcap U_f\right)\right). $$
Therefore, $\partial\overline{M_{y,>0}} = \bigsqcup_{y'<y} M_{y',>0}$.
\end{proof}

\begin{remark}
For simplicity, we only consider $M$ a finite-dimensional varieties. Similar arguments apply to
ind-varieties with suitable extra conditions. For
example, Bao and He \cite{BH21b,BH24} constructed positive
atlases for Kac--Moody partial flag varieties with target space being certain $J$-totally nonnegative flag varieties. We do not
pursue this generality here.
\end{remark}

Now we state the main result of this subsection.

\begin{theorem}\label{positive_atlas_for_compactfication}
The maps $\{\phi_{(r_1,r_2)}\}_{r_1,r_2\in W}$ form a positive atlas on $(\overline G,\overline{G_{\ge0}})$.
\end{theorem}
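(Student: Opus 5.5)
We need to verify the two conditions in the definition of a positive atlas. The stratification is $\overline G=\bigsqcup_{q\in\CQ}[q]$, and the open stratum is $[I;e,e;w_0,w_0]=G\cap B^+\dot w_0B^+\cap B^-\dot w_0B^-$, whose nonnegative part is $G_{>0}$. By \S\ref{sec:prelim_wonderful}, $\overline{G_{\ge0}}=\overline{G_{>0}}$. The Bruhat atlas property is \Cref{compatible_strata}. I would treat the two conditions in the opposite order: the image of the top cell first, then containment of the nonnegative strata in the charts.

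\textbf{Step 1 (image of the top cell).} Fix $(r_1,r_2)$. Then $G_{>0}=U^-_{w_0,>0}T_{>0}U^+_{w_0,>0}$ lies in every chart, because $G_{>0}\subseteq \dot r_1U^-B^+\cap\ldots$. More precisely, an element of $G_{>0}$ lies in every double Bruhat cell $B^+\dot r B^-$ and $B^-\dot rB^+$ up to the appropriate sides, so it can be written as $(u_1\dot r_1,u_2\dot r_2)\cdot\pi(\underline a)$ with all $a_i\neq0$.

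I would prove $\phi_{(r_1,r_2)}(G_{>0})\subseteq\tilde{\CB}_{\ge0}$ by a direct computation. Write $g\in G_{>0}$ as $g=(\dot r_1 u_1')\,t\,(\dot r_2u_2')^{-1}$ and track through $\sigma,\tau,\iota$. The key point is that the flag $\phi_{(r_1,r_2)}(g)$ should equal the single expression
\[
\tilde x\,\tilde B^+,\qquad \tilde x=\iota(g_-)^{\sharp}\,g_+^{\flat}\,(\text{torus part})^\natural\,\tilde y_{I^\natural}\cdots,
\]
built from a factorization $g=g_+tg_-$ with $g_\pm\in U^\pm_{w_0,>0}$, multiplied by elements of $\tilde B^+$ on the right. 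This expression is independent of $(r_1,r_2)$ modulo the right $\tilde B^+$-action.

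I expect this to work as follows. Do the computation for $r_1=r_2=e$ (compare \Cref{atlas_example}), where the map is visibly $\tilde y$'s and $\tilde x$'s with positive parameters applied to $\dot w_0^\sharp\dot w_0^\flat$. Then show that the transition maps $\phi_{(r_1,r_2)}\circ\phi_{(e,e)}^{-1}$ restricted to $G$ are the identity on $\tilde\CB$. That is, all atlas maps agree on the overlap of $G$ with the charts; this is plausible since $\mathring{\tilde\CB}_{\varphi(\xi)}$ are different Schubert cells inside the same open Richardson-type region.

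Having landed in $\tilde\CB_{\ge0}\cap\mathring{\tilde\CB}_{\varphi(q_{top}),\varphi(\xi_{(r_1,r_2)})}$, the image is the positive Richardson cell. This follows because the image of $G_{>0}$ is open and closed of full dimension in it and connected, and the positive cell is a connected component (Bao--He). This agreement of the maps on $G_{>0}$, and its positivity, is the main obstacle. What I would try first is to express $g\in G_{>0}$ via a reduced word for $w_0$ in the $x_i,y_i$ and push each factor $x_i(a),y_i(a)$ through the definition, using $\iota$ to convert $y$'s to $x$'s on the $\sharp$ side.

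\textbf{Step 2 (charts contain nonnegative strata).} Let $q=(J;x,y;v,w)$ with $[q]\cap\CO_{(r_1,r_2)}\ne\emptyset$. By \Cref{nonempty_intersection} this is equivalent to a Bruhat condition. I would use the cell parametrization from \cite{He04}: $[q]_{>0}$ consists of points $(g_1,g_2)\cdot z_J$ with $g_1,g_2$ products of positive $x_i,y_i$, torus and $\dot u$ factors. From this I would show $[q]_{>0}\subseteq(\dot r_1U^-,\dot r_2U^+)\cdot\overline T^\circ$. This reduces to the statement that a totally nonnegative element in a Bruhat cell $B^-\dot vB^+$ lies in $\dot r U^-B^+$ for every $r\ge v$ in the appropriate sense, which is standard (Lusztig/Rietsch), together with the $W_J$-twisting $z_1,z_2$.

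Alternatively, avoiding \cite{He04}: $[q]_{>0}\subseteq\overline{G_{>0}}$, and the closure of $\phi(G_{>0})$ in $\tilde\CB_{\ge0}$ within the chart covers $\tilde\CB_{\varphi(q),\varphi(\xi),>0}$. This gives a nonempty piece of $[q]_{>0}$ inside the chart. One then needs $[q]_{>0}$ connected, which holds because it is a cell by \cite{He04}, and closed in $[q](\mathbb R)$, so that the open-and-closed piece is everything. Given Step 1, I would use this second route, and it finishes the theorem.
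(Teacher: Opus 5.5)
Your proposal has genuine gaps in both steps.

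\textbf{Step 1.} The reduction to the chart $(e,e)$ rests on a false claim. By \Cref{image_atlas_map}, $\phi_{(r_1,r_2)}$ maps $\CO_{(r_1,r_2)}$ onto the Schubert cell $\mathring{\tilde{\CB}}_{\varphi(\xi_{(r_1,r_2)})}$. Since $\varphi$ is injective, these cells are pairwise disjoint. So the transition map $\phi_{(r_1,r_2)}\circ\phi_{(e,e)}^{-1}$ goes between disjoint subsets of $\tilde{\CB}$ and cannot be the identity. For $g\in G_{>0}$, which does lie in every chart, the points $\phi_{(r_1,r_2)}(g)$ are pairwise distinct. In the $PGL_2$ example, $\phi_{(1,1)}$ lands in the cell of $s_{1^\natural}s_{1^\sharp}s_{1^\flat}$, while $\phi_{(s_1,1)}$ lands in the cell of $s_{1^\flat}s_{1^\natural}s_{1^\flat}$. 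Your own remark that the images are ``different Schubert cells'' already rules out agreement.

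There is therefore no chart-independent expression, and positivity of $\phi_{(r_1,r_2)}(G_{>0})$ must be proved chart by chart. This is the real content of \Cref{image_positive}:
one factors $\dot r_1^{-1}h_1=u_-g_1k_1t_1$ with $g_1\in U^-_{r_1^{-1}w_0,>0}$, $k_1\in U^+_{r_1^{-1},>0}$, $t_1\in T_{>0}$ (\cite[Lemma 5.6]{GKL22}, \cite[\S6.3]{BH24}), and does the same on the $U^+$ side;
one uses \cite[Theorem 5.3(1)]{BH24} to get $\sigma_{r_1,+}(u_1)\dot r_1\in U^-_{r_1,>0}B^+$;
one then shows the image lies in $(U^-_{r_1,>0})^\flat(U^-_{r_2,>0})^\sharp\tilde U^-_{c_{I^\natural},>0}(U^+_{w_0r_1,>0}\dot r_1^{-1}\dot w_0)^\sharp(U^+_{w_0r_2,>0}\dot r_2^{-1}\dot w_0)^\flat\tilde B^+/\tilde B^+$, which is the positive Richardson cell.
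Only after this does your connected-component argument close the step.

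\textbf{Step 2.} The second route does not work as stated. Granting Step 1, $P:=[q]_{>0}\cap\CO_{(r_1,r_2)}$ is nonempty and open in $[q]_{>0}$. It maps onto $\tilde{\CB}_{\varphi(q),\varphi(\xi_{(r_1,r_2)}),>0}$ and is closed in $([q]\cap\CO_{(r_1,r_2)})(\mathbb R)$. But that is closedness relative to the chart, not in $[q]_{>0}$. A sequence in $P$ may converge to a point of $[q]_{>0}$ outside $\CO_{(r_1,r_2)}$, with its images escaping into smaller Schubert cells in the boundary of $\mathring{\tilde{\CB}}_{\varphi(\xi_{(r_1,r_2)})}$. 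Nothing in your argument excludes this; closedness of $[q]_{>0}$ in $[q](\mathbb R)$ is a different statement.

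Excluding this escape is exactly the first condition of a positive atlas. In \Cref{positive_atlas} it is an input, not a consequence of the second condition. The paper proves it directly in \Cref{inside_open_compactification}, using:
the factorization of \Cref{cell};
the Levi-positivity statement \Cref{levi_nonnegative}, namely $\pi_J(\dot r^{-1}g)\in U^-_{b,>0}T_{>0}U^+_{a^{-1},>0}$ with $a,b$ the $J$-residues;
the inequalities (3.1), derived from $vz_2\le r_1\le xz_1$ and $yz_1\le r_2\le wz_2$;
and \Cref{inside_open_group}.

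Your first route points in this direction, but its reduction to ``a nonnegative element of $B^-\dot vB^+$ lies in $\dot rU^-B^+$'' misses the Levi part. Handling the $W_J$-components $z_1,z_2$ is precisely where \Cref{levi_nonnegative} is needed, and that is the nontrivial step.
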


\subsection{Marsh--Rietsch parametrizations}
\label{sec:cell_parametrizations}

Let $v,w\in W$ such that $v\leq w$. Fix a reduced expression $\mathbf w=(s_{i_1},\ldots,s_{i_n})$ of $w$. A subexpression is a sequence
$\mathbf v=(t_1,\ldots,t_n)$ with
$t_k\in\{1,s_{i_k}\}$.
It is called positive if
\[
t_1\cdots t_{k-1}
   <t_1\cdots t_{k-1}s_{i_k},
   \enspace 1\leq k\leq n.
\]
By \cite[Lemma~3.5]{MR04}, there is a unique positive subexpression with product $v$,
which we denote by $\mathbf v_+$.

Define
\[
G_{\mathbf v_+,\mathbf w,>0}
=
\left\{
g_1\cdots g_n \bigg|
\begin{array}{ll}
g_k=\dot s_{i_k},&\text{if }t_k=s_{i_k},\\
g_k=y_{i_k}(a_k),\ a_k>0,&\text{if }t_k=1
\end{array}
\right\}.
\]
It is obvious that this set is isomorphic to
$\mathbb R_{>0}^{\ell(w)-\ell(v)}$. It was proved in \cite[Theorem~11.3]{MR04} for groups of finite types and in \cite[Theorem~4.10]{BH21a} for Kac--Moody groups that 
\[
\mathcal B_{v,w,>0} = G_{\mathbf v_+,\mathbf w,>0}B^+/B^+.
\]

Note that $G_{\mathbf 1_+,\mathbf w,>0} = U_{w,>0}^-$. We will also use the notations
\[
\begin{aligned}
U^-_{w,<0}
&=\{y_{i_1}(a_1)\cdots y_{i_n}(a_n)|
       a_1,\ldots,a_n<0\} = \left(U_{w^{-1},>0}^-\right)^{-1},\\
U^+_{w,<0}
&=\{x_{i_1}(a_1)\cdots x_{i_n}(a_n)|
       a_1,\ldots,a_n<0\} = \left(U_{w^{-1},>0}^+\right)^{-1}.
\end{aligned}
\]

\subsection{Some operations in Weyl groups} \label{sec:Weyl_operations}
For $w,v\in W$, define
\begin{gather*}
    w*v = \max\{w'v'|w'\leq w,v'\leq v\},
\\  w\circ_l v = \min\{w'v|w'\leq w\}, \quad w\circ_r v = \min\{wv'|v'\leq v\}.
\end{gather*}
The existence of these elements follows from \cite[Lemma 1.4]{He07}. We recall some basic properties. For a proof, we refer to \cite[Appendix A]{HL11}.
\begin{lemma} \label{operations}
For $w,v,u\in W$, we have
\begin{enumerate}
    \item $(w*v)\circ_l u = w\circ_l(v\circ_l u),\enspace w\circ_r(v*u) = (w\circ_r v)\circ_r u$;
    \item $w\circ_l v\leq u$ if and only if $v\leq w^{-1}*u$;
    \item $w\circ_r v\leq u$ if and only if $w\leq u*v^{-1}$.
\end{enumerate}
\end{lemma}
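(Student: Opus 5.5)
The plan is to prove (2) first, deduce (1) from (2) by a Yoneda-type argument in the Bruhat poset, and obtain (3) from (2) by inversion. Throughout I use the standard facts from \cite{He07}: $*$ is the Demazure (0-Hecke monoid) product, so it is associative and satisfies $(a*b)^{-1}=b^{-1}*a^{-1}$. Moreover, $w*v$ can be written as $w'v'$ with $w'\le w$, $v'\le v$ and $\ell(w'v')=\ell(w')+\ell(v')$. Finally, the max and min in the definitions exist, so that every $w'v'$ with $w'\le w$, $v'\le v$ satisfies $w'v'\le w*v$, and every $w'v$ with $w'\le w$ satisfies $w\circ_l v\le w'v$. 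Two elementary facts follow directly from the definition of $\circ_l$. First, $w\circ_l v$ is antitone in $w$: if $w_1\le w_2$ then $w_2\circ_l v\le w_1\circ_l v$, since the minimum is taken over a larger set. Second, for a simple reflection $s$ we have $s\circ_l v=\min(v,sv)$.

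For (2), the direction ``$\Rightarrow$'' is immediate. If $w\circ_l v\le u$, pick $w'\le w$ with $w'v\le u$. Then $v=w'^{-1}\cdot(w'v)$ is a product $xy$ with $x\le w^{-1}$ and $y\le u$, so $v\le w^{-1}*u$ by the defining maximality of $*$. For ``$\Leftarrow$'', write $w^{-1}*u=xy$ with $x\le w^{-1}$, $y\le u$ and $\ell(xy)=\ell(x)+\ell(y)$. I will prove the key claim: if $v\le xy$ with $\ell(xy)=\ell(x)+\ell(y)$, then $x^{-1}\circ_l v\le y$. The proof is by induction on $\ell(x)$. Write $x=sx_1$ with $\ell(x)=1+\ell(x_1)$. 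Since $\ell(s\cdot x_1y)>\ell(x_1y)$, the lifting property gives $s\circ_l v=\min(v,sv)\le x_1y$. Induction then yields $x_1^{-1}\circ_l(s\circ_l v)\le y$.

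It remains to identify $x_1^{-1}\circ_l(s\circ_l v)$ with $x^{-1}\circ_l v$. This is the special case of (1) where the middle factor is a simple reflection, and I check it by hand. The set $\{x''v : x''\le x^{-1}=x_1^{-1}s\}$ equals the set $\{x'v\ \text{or}\ x'sv : x'\le x_1^{-1}\}$, by the subword property, because $x_1^{-1}s$ is reduced. Taking minima and using antitonicity in the first argument gives the equality. Finally, since $x^{-1}\le w$, antitonicity gives $w\circ_l v\le x^{-1}\circ_l v\le y\le u$.

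For (1), I compare upper sets. For any $z$, (2) gives $(w*v)\circ_l u\le z$ iff $u\le (w*v)^{-1}*z=v^{-1}*(w^{-1}*z)$. Applying (2) twice more, this holds iff $v\circ_l u\le w^{-1}*z$, iff $w\circ_l(v\circ_l u)\le z$. Two elements of a poset with the same principal upper sets are equal, so the first identity follows. The second identity in (1), and statement (3), follow by inversion. The key observation is $(w\circ_r v)^{-1}=v^{-1}\circ_l w^{-1}$, which is immediate from the definitions, together with $(a*b)^{-1}=b^{-1}*a^{-1}$ and the fact that Bruhat order is preserved by inversion. Applying these to the statements about $\circ_l$ gives the statements about $\circ_r$.

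The main obstacle is the ``$\Leftarrow$'' direction of (2). There one needs both the reduced factorization of the Demazure product and a careful use of the lifting property. The simple-reflection case of (1) is used inside that induction, so it must be proved directly there rather than deduced from (2), to avoid circularity.
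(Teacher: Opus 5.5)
Your proof is correct, but it follows a different route from the paper. The paper gives no argument here and simply refers to \cite[Appendix A]{HL11}.

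Your route treats (2) as the basic statement: $w\circ_l(-)$ is the lower adjoint of $w^{-1}*(-)$ on the Bruhat poset. You prove it by induction along a length-additive factorization $w^{-1}*u=xy$, using only the lifting property and the subword property. Then (1) follows formally as the adjoint form of associativity of the Demazure product, and (3) and the $\circ_r$ half of (1) follow by inversion. This makes the lemma self-contained, given standard facts about $*$: existence of the max/min, associativity, $(a*b)^{-1}=b^{-1}*a^{-1}$, and the reduced factorization. It also explains why the three statements belong together. The citation is shorter but hides this structure.

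One correction in the inductive step. You claim the equality $x^{-1}\circ_l v=x_1^{-1}\circ_l(s\circ_l v)$, but the set identity plus antitonicity in the first argument gives only one inequality, $x^{-1}\circ_l v\le x_1^{-1}\circ_l(s\circ_l v)$. This holds because $x_1^{-1}\circ_l(s\circ_l v)$ equals $x'v$ or $x'sv$ for some $x'\le x_1^{-1}$, so it lies in $\{x''v : x''\le x^{-1}\}$.

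The reverse inequality would need monotonicity of $\circ_l$ in its second argument: $x_1^{-1}\circ_l v_0\le x_1^{-1}\circ_l(sv_0)$ whenever $v_0<sv_0$. Antitonicity in the first argument does not give this. Fortunately, only the inequality you do have is needed to pass from $x_1^{-1}\circ_l(s\circ_l v)\le y$ to $x^{-1}\circ_l v\le y$, so the proof of (2) stands.

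For the same reason, your closing remark is unnecessary: you do not need to prove the simple-reflection case of (1) separately inside the induction. The full equality follows afterwards from (1) itself, since $x_1^{-1}*s=x_1^{-1}s$.
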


\begin{remark}
Lusztig \cite[Corollary 2.8]{L94} showed that for $w_1,w_2\in W$, we have $U_{w_1,>0}^\pm U_{w_2,>0}^\pm= U_{w_1*w_2,>0}^\pm$.
\end{remark}

We have two lemmas describing some Zariski open subsets that a totally nonnegative cell lies in. The first one follows from \cite[\S3.3(a) \& Lemma 5.1]{BH24}. 

\begin{lemma} \label{inside_open_flag}
Suppose that $v,r,w\in W$ with $v\leq w$. Then we have
$$G_{\mathbf v_+,\mathbf w,>0}B^+/B^+\subseteq \dot rB^-B^+/B^+$$
if and only if $v\leq r\leq w$.

\end{lemma}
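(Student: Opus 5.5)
The plan is to turn the inclusion $G_{\mathbf v_+,\mathbf w,>0}B^+/B^+\subseteq \dot rB^-B^+/B^+$ into the non-vanishing of a single extremal-weight matrix coefficient on the cell. The necessity of $v\le r\le w$ will come from a torus-contraction argument, and sufficiency from a positivity argument.

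\emph{Setup.} Fix a regular dominant weight $\lambda$. Let $V_\lambda$ be the integrable highest weight module, with highest weight vector $\eta_\lambda$. Let $\eta^*_{r\lambda}$ be the linear functional dual to the extremal weight vector $\dot r\eta_\lambda$, taken with respect to the canonical basis. Set
\[
p_r(gB^+)=\langle \eta^*_{r\lambda},\, g\,\eta_\lambda\rangle .
\]
This is well defined up to the character of $B^+$, so its vanishing is well defined on $\mathcal B$. The first step is to check that $gB^+\in \dot rB^-B^+/B^+$ if and only if $p_r(gB^+)\neq 0$. To see this, translate by $\dot r^{-1}$. The statement then becomes the standard fact that $gB^+\in B^-B^+/B^+$ if and only if the highest-weight coefficient of $g\eta_\lambda$ is nonzero. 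This follows from the decomposition $B^-\dot uB^+$ and the fact that $\lambda$ is regular. In the Kac--Moody case I would use the Birkhoff decomposition in the same way.

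\emph{Necessity.} Suppose the inclusion holds. Then the Richardson variety $\mathring{\mathcal B}_{v,w}$ meets the open set $\dot rB^-B^+/B^+$. Now $\dot rB^-B^+/B^+=\dot rU^-B^+/B^+$ is an affine space. A suitable cocharacter of $T$ (regular, and antidominant after conjugating by $r$) contracts it to the fixed point $\dot rB^+/B^+$. The closure $\mathcal B_{v,w}$ is $T$-stable and closed. Hence its intersection with this affine cell is nonempty, $T$-stable and closed in the cell, so it contains the limit point $\dot rB^+/B^+$. The $T$-fixed points of $\mathcal B_{v,w}$ are exactly the $\dot uB^+$ with $v\le u\le w$, which gives $v\le r\le w$. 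Running the same argument in reverse shows that when $v\le r\le w$, the function $p_r$ is not identically zero on $\mathring{\mathcal B}_{v,w}$. Indeed $\dot rB^+$ lies in the closure, so the open set $\dot rB^-B^+/B^+$ meets the irreducible variety $\mathcal B_{v,w}$ and hence its dense open stratum.

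\emph{Sufficiency, the main step.} Assume $v\le r\le w$. Write a point of the cell as $g_1\cdots g_nB^+$ using the Marsh--Rietsch parametrization, with $g_k$ either $\dot s_{i_k}$ or $y_{i_k}(a_k)$. Expand $p_r(g_1\cdots g_n B^+)$ as a polynomial in the variables $a_k$. The key input, which I take from \cite[\S3.3(a)]{BH24}, is Lusztig's positivity: the operators $y_i(a)$ with $a>0$ and the elements $\dot s_i$, applied to extremal weight vectors, have nonnegative coefficients in the canonical basis. In the terms of the expansion, $\dot s_i$ is only ever applied where $\mathbf v_+$ is positive, that is, in the direction that raises length, so it sends canonical basis elements to canonical basis elements without signs. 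Granting this, $p_r$ restricted to the cell is a polynomial in the $a_k$ with nonnegative coefficients. By the previous paragraph it is not identically zero on the Zariski-dense image of $\mathbb R^{\ell(w)-\ell(v)}$. A nonzero polynomial with nonnegative coefficients is strictly positive on $\mathbb R_{>0}^{\ell(w)-\ell(v)}$, which gives the inclusion.

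I expect the main obstacle to be the sign control for the $\dot s_i$ factors. I would handle it through the positivity of the subexpression $\mathbf v_+$, as in \cite[Lemma 5.1]{BH24}. As an alternative route to the same conclusion: $p_r$ does not vanish on the connected cell, since a zero would have to be interior to the nonnegative closure while $p_r\ge 0$ there; so it has constant sign.
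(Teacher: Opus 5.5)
\textbf{Gap in the sufficiency direction.} Your necessity argument is sound. It is a self-contained proof of the intersection criterion that the paper imports from \cite[\S3.3(a)]{BH24}, and it also correctly shows that $p_r\not\equiv0$ on $\mathring{\mathcal B}_{v,w}$ for $v\le r\le w$.

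The sufficiency direction, which the paper takes from \cite[Lemma~5.1]{BH24}, has a genuine gap at exactly the point you flagged. It is not true that $\dot s_i$ sends canonical basis elements to canonical basis elements without signs. This is guaranteed only for $b$ with $E_ib=0$; already for $SL_2$ acting on $\mathbb R^2$ one has $\dot s e_2=-e_1$.

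In $g_1\cdots g_n\eta_\lambda$, the factor $g_k=\dot s_{i_k}$ acts on $g_{k+1}\cdots g_n\eta_\lambda$, which is a combination of many canonical basis elements of different weights. Positivity of $\mathbf v_+$ is a condition on the Weyl group prefix $t_1\cdots t_{k-1}$. By itself it says nothing about whether the vector being acted on is $i_k$-highest. So you have not excluded negative coefficients in $p_r(g_1\cdots g_nB^+)$, and then your conclusion fails: a nonzero polynomial that is merely $\ge0$ on $\mathbb R_{>0}^m$ can vanish there, e.g.\ $(a_1-a_2)^2$. Your fallback has the same defect, since $p_r\ge0$ on the nonnegative part gives no contradiction from a zero inside the cell.

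\textbf{The missing lemma.} What you need is: if $t_k=s_{i_k}$, then $E_{i_k}$ annihilates $g_{k+1}\cdots g_n\eta_\lambda$. Granting this, together with nonnegativity of that vector from the inductive step and positivity of $E_{i_k}$ on the canonical basis, every $b$ in its support has $E_{i_k}b=0$. Hence $\dot s_{i_k}b=F_{i_k}^{(n_b)}b$ is nonnegative, and induction on $n$ yields your nonnegative-coefficient claim.

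The lemma is true but needs proof. Here is one way:
\begin{itemize}
\item Put $u=t_{k+1}\cdots t_n$ and $w''=s_{i_{k+1}}\cdots s_{i_n}$. A suffix of a positive subexpression is again positive, so $g_{k+1}\cdots g_n\in U^-\dot u\cap B^+\dot w''B^+$.
\item One has $s_{i_k}u>u$ and $s_{i_k}w''>w''$.
\item Uniqueness of positive subexpressions forces $s_{i_k}u\not\le w''$. Otherwise, $1$ followed by the positive subexpression for $s_{i_k}u$ in $(s_{i_{k+1}},\ldots,s_{i_n})$ would be a second positive subexpression for $s_{i_k}u$.
\item The vector therefore lies in the intersection of the Demazure module $V_{w''}(\lambda)=U(\mathfrak b^+)\dot w''\eta_\lambda$ with the opposite Demazure module $V^{u}(\lambda)=U(\mathfrak b^-)\dot u\eta_\lambda$. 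This intersection is spanned by the canonical basis elements indexed by $\mathbf B_{w''}(\lambda)\cap\mathbf B^{u}(\lambda)$.
\item Kashiwara's string property for Demazure and opposite Demazure crystals then shows the following. Any $i_k$-string of length $>1$ meeting this set would have its lowest element in $\mathbf B_{w''}(\lambda)\cap\mathbf B^{s_{i_k}u}(\lambda)$, which is empty because $s_{i_k}u\not\le w''$.
\end{itemize}
Without this lemma, or the citation of \cite[Lemma~5.1]{BH24} as the paper does, the key step of your sufficiency argument is unproved.
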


\begin{lemma} \label{inside_open_group}
Suppose that $v,w,r_1,r_2\in W$. Then we have
$$U^-_{w,>0}T_{>0}U^+_{v,>0}\subseteq \dot r_1B^-B^+\dot r_2^{-1}$$
if and only if $w\geq r_1\circ_r r_2^{-1}$ and $v\geq r_1\circ_l r_2^{-1}$.

\begin{proof}
Let $w' = w* r_2$, $v' = v\circ_l r_2$. It follows from \cite[Proposition 5.2]{BH21a} that
$$U^-_{w,>0}T_{>0}U_{v,>0}^+\dot r_2B^+/B^+\subseteq G_{\mathbf{v'}_+,\mathbf{w'},>0}B^+/B^+.$$ 
Suppose that $v'\leq r_1\leq w'$. By \Cref{inside_open_flag} we have
$$U^-_{w,>0}T_{>0}U^+_{v,>0}\dot r_2B^+/B^+\subseteq\dot r_1B^-B^+/B^+.$$
On the other hand, if $U^-_{w,>0}T_{>0}U^+_{v,>0}\dot r_2B^+/B^+\subseteq\dot r_1B^-B^+/B^+$, then $$\mathring{\mathcal B}_{v',w'}\bigcap\dot r_1B^-B^+/B^+ \neq \emptyset,$$ 
and by \cite[\S3.3 (a)]{BH24}, we know that $v'\leq r_1\leq w'$.

Finally, by \Cref{operations}, $v'\leq r_1\leq w'$ is equivalent to $w\geq r_1\circ_r r_2^{-1}$ and $v\geq r_1\circ_l r_2^{-1}$.
\end{proof}

\end{lemma}

\subsection{Levi factors of some parametrizing sets} \label{sec:levi_residue}
In this subsection, we assume that $G$ is of finite type. Let $J\subseteq I$. Let $p_J^\pm: P_J^\pm\to L_J$ be the natural projection from the parabolic subgroups to their Levi subgroup. For an element $g = b_-b_+$ where $b_\pm\in B^\pm$, we denote $\pi_J(g):=p_J^-(b_-)p_J^+(b_+)$. It is clear that $\pi_J$ is well-defined. Note that $\pi_J(g)\in B_J^-B_J^+$ where $B_J^\pm = B^\pm\cap L_J$. 

For $v,w\in W^J$ and $v\leq w$, we define the \emph{$J$-residue} of $v$ and $w$ to be
$$\text{Res}_J(v,w) := \max\{z\in W_J|vz\leq w\}.$$
The existence of $\text{Res}_J(v,w)$ follows from \cite[Lemma 11(ii)]{LLM02}.

\begin{lemma} \label{levi_nonnegative}
Suppose that $v,r,w\in W^J$ such that $v\leq r\leq w$. 
Let $a=\text{Res}_J(v,r)$ and $b=\text{Res}_J(r,w)$.
For $g\in G_{\mathbf v_+,\mathbf w,>0}$, we have $\pi_J(\dot r^{-1}g)\in U_{b,>0}^-T_{>0}U_{a^{-1},>0}^+$.

\begin{proof}
We prove the statement by induction on $l(w)$. The base case where $l(w) = 0$ is trivial. 

Let $\mathbf w = (s_{i_1},s_{i_2},\cdots, s_{i_n})$, and write $g = g_1g_2\cdots g_n$ as in the definition of $G_{\mathbf v_+,\mathbf w,>0}$. Set $w' = s_{i_1}w$, $r' = s_{i_1}\circ_l r$, $v' = s_{i_1}\circ_l v$, $a' = \text{Res}_J(v',r')$, and $b' = \text{Res}_J(r',w')$. Then $v',r',w'\in W^J$ and $v'\leq r'\leq w'$.

We divide the computation into several cases.
\begin{enumerate}[label = (\roman*)]
    \item $s_{i_1}r>r$ and $s_{i_1}r\in W^J$. Then $r' = r$ and $v\leq r\leq s_{i_1}w$. This forces $g_1\in y_{i_1}(\mathbb R_{>0})$ and $b' = b$. In this case, we have $\dot r^{-1}y_{i_1}(\mathbb R_{>0})\dot r\subseteq U_{P_J^-}$, and 
    $$\dot r^{-1}G_{\mathbf v_+,\mathbf w,>0}\subseteq U_{P_J^-}\left(\dot r^{\prime,-1}G_{\mathbf v_+,\mathbf w',>0}\right).$$

    \item $s_{i_1}r = rs_j$ for some $j\in J$. We still have $r' = r$, $v\leq r\leq s_{i_1}w$, and $g_1\in y_{i_1}(\mathbb R_{>0})$. Now $b = s_j*b'$. In this case, we have $\dot r^{-1}y_{i_1}(\mathbb R_{>0})\dot r = y_j(\mathbb R_{>0})$. Here, the sign is preserved according to \cite[Lemma 1.3.12]{He05}. Then
    $$\dot r^{-1}G_{\mathbf v_+,\mathbf w,>0}\subseteq U^-_{s_j,>0}\left(\dot r^{\prime,-1}G_{\mathbf v_+,\mathbf w',>0}\right).$$

    \item $s_{i_1}r<r$ and $g_1 = \dot s_{i_1}$. Then $r' = s_{i_1}r\in W^J$, $v' = s_{i_1}v$, and $a' = a$, $b' = b$. We have 
        $$\dot r^{-1}G_{\mathbf v_+,\mathbf w,>0}=\dot r^{\prime,-1}G_{\mathbf v'_+,\mathbf w',>0}.$$

    \item $s_{i_1}r<r$, $g_1= y_{i_1}(m)$ for $m>0$, and $s_{i_1}v\in W^J$. In this case, $r'=s_{i_1}r\in W^J$, $a' = a$, and $b' = b$.
    
    Using the identity $$y_{i_1}(m) = x_{i_1}(m^{-1})\dot s_{i_1}x_{i_1}(m)\alpha^\vee_{i_1}(m),$$
    we have $$\dot r^{-1}gB^+/B^+ = (\dot r^{-1}x_{i_1}(m^{-1})\dot r)\dot r^{\prime,-1}x_{i_1}(m)g'B^+/B^+,$$
    for some $g'\in G_{\mathbf v_+,\mathbf w',>0}$. By assumption, $\dot r^{-1}x_{i_1}(m^{-1})\dot r\in U_{P_J^-}$.
    
    Let $w_{J,0}$ denote the longest element in $W_J$. Since $v,v',w'\in W^J$, we have $$G_{\mathbf{vw_{J,0}}_+,\mathbf{w'w_{J,0}},>0} = G_{\mathbf v_+,\mathbf{w'},>0}\dot w_{J,0},\enspace G_{\mathbf{v'w_{J,0}}_+,\mathbf{w'w_{J,0}},>0} = G_{\mathbf v'_+,\mathbf{w'},>0}\dot w_{J,0}.$$ By \cite[Proposition 5.2]{BH21a}, we have
    \begin{equation*}
        x_{i_1}(m)g'\dot w_{J,0}B^+/B^+ = h\dot w_{J,0}B^+/B^+,\text{ for some }h\in G_{\mathbf{v'}_+,\mathbf{w'},>0}.
    \end{equation*}
    Since $w'\in W^J$, we may consider the map 
    \[
    \begin{aligned}
    \pi^{w'w_{J,0}}_{w'}:\quad
    U^+\dot w'\dot w_{J,0}B^+/B^+
       &\longrightarrow U^+\dot w'B^+/B^+,\\
    u\dot w'\dot w_{J,0}B^+/B^+
       &\longmapsto u\dot w'B^+/B^+.
    \end{aligned}
    \]
    as in \cite[\S4.3]{MR04}. Since $x_{i_1}(m)\in B^+$, we have
    \begin{align*}
      x_{i_1}(m)g'B^+/B^+ &= \pi^{w'w_{J,0}}_{w'}(x_{i_1}(m)g'\dot w_{J,0}B^+/B^+) \nonumber \\
      &=\pi^{w'w_{J,0}}_{w'}(h\dot w_{J,0}B^+/B^+)\nonumber \\
      &= hB^+/B^+.\
    \end{align*}
    It follows that we have 
    $$h^{-1}x_{i_1}(m)g'\in B^+\cap \dot w_{J,0}B^+\dot w_{J,0}^{-1} = TU_{P_J^+}.$$
    Write $x_{i_1}(m)g' = htu$, where $t\in T$ and $u\in U_{P_J^+}$. We show that $t\in T_{>0}$.

    For this purpose, we may assume that $G$ is simply-laced, because for groups of other types the result can be obtained from folding. Let $\lambda$ be a regular dominant weight, $V_\lambda$ the corresponding irreducible highest weight module and $\eta_\lambda$ a highest weight vector. Then we have
    $$x_{i_1}(m)g'\eta_\lambda =\lambda(t) h\eta_\lambda.$$
    Since $x_{i_1}(m)g'B^+\in\mathcal B_{\ge0}$ and $hB^+\in\mathcal B_{\ge0}$, the vectors $x_{i_1}(m)g'\eta_\lambda$ and $h\eta_\lambda$ are both nonnegative linear combinations of canonical basis \cite[Proposition 8.17 (b)]{L94}, and we have $\lambda(t)>0$. Since $\lambda$ is arbitrary, we have $t\in T_{>0}$.

    In conclusion, we have
    $$\dot r^{-1}G_{\mathbf v_+,\mathbf w,>0}\subseteq U_{P_J^-}\left(\dot r^{\prime,-1}G_{\mathbf {v'}_+,\mathbf{w'},>0}\right)T_{>0}U_{P_J^+}.$$

    \item $s_{i_1}r<r$, $g_1\in y_{i_1}(\mathbb R_{>0})$, and $s_{i_1}v=vs_j$ for $j\in J$. In this case, $r' = s_{i_1}r\in W^J$, $v' = v$, $a = s_j*a'$, and $b' = b$. 

    Similar to (iv), we have
    $$\dot r^{-1}G_{\mathbf v_+,\mathbf w,>0}\subseteq U_{P_J^-}\dot r^{\prime,-1}x_{i_1}(\mathbb R_{>0})G_{\mathbf v_+,\mathbf{w'},>0}.$$
    It follows from \cite[Lemma 1.3.12 \& Lemma 2.2.4]{He05} that
    $$x_{i_1}(\mathbb R_{>0})G_{\mathbf v_+,\mathbf{w'},>0}\subseteq G_{\mathbf v_+,\mathbf w',>0}x_j(\mathbb R_{>0})T_{>0}U_{P_J^+}.$$
    See also \cite[Lemma 4.6]{HX26} for a similar argument. Therefore, we have
    $$\dot r^{-1}G_{\mathbf v_+,\mathbf w,>0}\subseteq U_{P_J^-}\left(\dot r^{\prime,-1}G_{\mathbf{v}_+,\mathbf{w'},>0}\right)U^+_{s_j,>0}T_{>0}U_{P_J^+}.$$
\end{enumerate}
The statement then follows from the inductive hypothesis on $w'$.
\end{proof}

\end{lemma}

\subsection{Totally nonnegative cells in charts}
\label{sec:nonnegative_cells}

We now show that the nonnegative part of a stratum lies in every chart meeting that stratum. Here we use the factorization of elements in $\overline{G_{\ge0}}$ from \cite{He04}.

\begin{lemma}\label{cell}
Let $(J;x,y;v,w)\in\CQ$.
Every point of $[J;x,y;v,w]_{>0}$ can be written as $(g_1,\iota(g_2)^{-1}g_3^{-1})\cdot z_J$,
where $g_1\in G_{\mathbf v_+,\mathbf x,>0}$, $g_2\in G_{\mathbf{y^J}_+,\mathbf{w^J},>0}$, $g_3\in U^-_{y_J^{-1},>0}T_{>0}U^+_{w_J^{-1},>0}$.
\end{lemma}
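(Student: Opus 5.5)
We would derive this from the cell decomposition of $\overline{G_{\ge0}}$ in \cite{He04}. There, each point of $\overline{G_{\ge0}}$ lying in the $G\times G$-orbit of $z_J$ is written as $(g,g')\cdot z_J$, with $g$ and $g'$ taken from explicit totally nonnegative pieces of $G$. These pieces are products of $U^-_{\ge0}$, $T_{>0}$, $U^+_{\ge0}$ together with elements $\dot u$ for $u\in W$. The first step is to use this description of $\overline{G_{\ge0}}\cap (G\times G)\cdot z_J$. Concretely, we realize every such point as a limit of $g\in G_{>0}$: factor $g=u^-tu^+$, let $t$ degenerate along the coweights $\omega_i^\vee$ with $i\notin J$, and use that $(g_1,g_2)\cdot z_J$ depends only on the classes modulo $\text{Stab}_{G\times G}(z_J)$.

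Next we normalize the factorization. The stabilizer contains $(U_{P_J^+},U_{P_J^-})$ and the diagonal-mod-center copy of $L_J$. We use these to push the $L_J$-part of the right factor onto the left, or vice versa, so that the point takes the form $(g_1,\iota(g_2)^{-1}g_3^{-1})\cdot z_J$. Here $g_1$ is a Marsh--Rietsch element, $g_2$ is a Marsh--Rietsch element with parabolic indices, and $g_3\in L_{J,\ge0}$. Two tools do this work. The first is the identity $\iota(U^-_{w,>0})=U^+_{w,>0}$. The second is that products of positive pieces stay positive, namely Lusztig's $U^\pm_{w_1,>0}U^\pm_{w_2,>0}=U^\pm_{w_1*w_2,>0}$ and the rules in \cite[Lemma 1.3.12 \& Lemma 2.2.4]{He05}. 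The key point is that in $G_{\mathbf v_+,\mathbf x,>0}$ with $x\in W^J$, all remaining positive factors can be absorbed into $U_{P_J^+}$ modulo the stabilizer. The same holds for the $y^J,w^J$ part. This uses the decompositions $y=y^Jy_J$ and $w=w^Jw_J$.

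The final step identifies the indices. Membership in $[\alpha]_+$ and $[\beta]_-$ is read off from Bruhat cells. Since $g_1\in B^-\dot vB^-\cap B^+\dot xB^+$ by Lusztig's and Marsh--Rietsch's descriptions, the left factor lies in $B^+\dot x\cap B^-\dot v$ modulo $B^\pm$. For the right factor, $\iota(g_2)^{-1}$ is controlled by $y^J$ and $w^J$. The Levi piece $g_3$ lies in $U^-_{y_J^{-1},>0}T_{>0}U^+_{w_J^{-1},>0}$, which by \cite[Proposition~2.7]{L94} equals $L_{J,\ge0}\cap B_J^+\dot y_J^{-1}B_J^+\cap B_J^-\dot w_J^{-1}B_J^-$. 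Since the strata are disjoint, each positive point lands in exactly one cell of this shape, and that cell must be $[J;x,y;v,w]$.

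We expect the main obstacle to be the normalization step, specifically proving that the $L_J$-component is positive with exactly the indices $y_J^{-1}$ and $w_J^{-1}$ and is not merely nonnegative. To handle this, we would first try passing to limits from $G_{>0}$ and invoke Lemma~\ref{levi_nonnegative}, applied with $r=y^J$ or $r=w^J$, to control $\pi_J$ of the relevant factors.
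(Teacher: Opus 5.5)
Your overall frame matches the paper's. You take the factorization from \cite{He04}, check that each resulting piece lies in the stratum $[J;x,y;v,w]$, and conclude from disjointness of the strata. The gap is in the step you yourself call the main obstacle. The proposal leaves it open, and it would not go through as sketched.

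Letting only $t$ degenerate along $\omega_i^\vee$, $i\notin J$, with $u^\pm\in U^\pm_{w_0,>0}$ fixed, produces only points $(u^-t',(u^+)^{-1})\cdot z_J$. After normalizing, these all have $v=1$ and $y^J=1$, and they lie in the open cell of $\overline{G_{\ge0}}\cap(G\times G)\cdot z_J$. Cells in which $g_1$ or $g_2$ contains factors $\dot s_i$ arise only when $u^\pm$ degenerate as well, and controlling such limits is exactly what \cite{He04} does.

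The claim that leftover positive factors are absorbed into $U_{P_J^+}$ is also not right. The factors indexed by $W_J$ form a Levi component that must be carried across by the diagonal copy of $L_J$ in $\text{Stab}_{G\times G}(z_J)$; that is where $g_3$ comes from. Finally, \Cref{levi_nonnegative} does not produce factorizations of limit points. It computes $\pi_J(\dot r^{-1}g)$ for a \emph{given} Marsh--Rietsch element, and the paper uses it later for the chart containment in \Cref{inside_open_compactification}.

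The missing observation is that no normalization is needed. \cite[Theorem~3.14 \& Corollary~3.16]{He04} already states that every point of $\overline{G_{\ge0}}$ has the form $(g_1,\iota(g_2)^{-1}g_3^{-1})\cdot z_J$, with $g_1,g_2,g_3$ in exactly these sets, for some $J$, some $v\le x$ in $W^J$, and some $y,w$ with $y^J\le w^J$. The lemma therefore reduces to your third step, which is the paper's argument.

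In that step you need the sharp containments $G_{\mathbf v_+,\mathbf x,>0}\subseteq B^+\dot xB^+\cap U^-\dot v$ and $\iota(G_{\mathbf{y^J}_+,\mathbf{w^J},>0})^{-1}\subseteq U^+\dot y^J\cap B^-\dot w^JB^-$, not just $g_1\in B^-\dot vB^-$. Suppose $g_1=b\dot vlu$ with $b\in B^-$, $l\in B_J^-$, $u\in U_{P_J^-}$. Then $u$ cannot be absorbed by the stabilizer, whose unipotent part in the left component is $U_{P_J^+}$. It also cannot be pushed into the left $B^-$, since $\dot vU_{P_J^-}\dot v^{-1}\not\subseteq B^-$ in general. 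So the double coset alone does not place the point in $(B^-,B^-)(\dot v,\dot w)\cdot z_J$.

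With the sharp containments, together with $g_3\in B_J^+\dot y_J^{-1}B_J^+\cap B_J^-\dot w_J^{-1}B_J^-$, the conditions $x,v\in W^J$, and the stabilizer, the point lies in $[J;x,y;v,w]$. Disjointness of the strata then finishes the proof.
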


\begin{proof}
By \cite[Theorem~3.14 \& Corollary~3.16]{He04}, every point
of $\overline{G_{\ge0}}$ has such a factorization for some
$J$, some pairs $v\leq x$ and $y,w\in W$ such that $y^J\leq w^J$. In particular, this is a cell decomposition. We show that each cell lies in a unique stratum.

The Marsh-Rietsch parametrization satisfies
\[
G_{\mathbf v_+,\mathbf x,>0}
   \subseteq B^+\dot xB^+\cap U^-\dot v,\enspace 
\iota\bigl(
G_{\mathbf{y^J}_+,\mathbf{w^J},>0}
\bigr)^{-1}
   \subseteq U^+\dot{y}^J
      \cap B^-\dot{w}^JB^-.
\]
We also have
\[
U^-_{y_J^{-1},>0}T_{>0}U^+_{w_J^{-1},>0}
\subseteq
B_J^+\dot{y}_J^{-1}B_J^+
\cap
B_J^-\dot{w}_J^{-1}B_J^-.
\]
Using the stabilizer of $z_J$ and the assumptions
$x,v\in W^J$, these containments show that the resulting
point lies in
\[
(B^+,B^+)(\dot x,\dot y)\cdot z_J
\ \bigcap\
(B^-,B^-)(\dot v,\dot w)\cdot z_J
=[J;x,y;v,w].
\]
The conclusion follows since the strata are disjoint.
\end{proof}

We apply this parameterization together with the lemmas in
\S\ref{sec:Weyl_operations} and \S\ref{sec:levi_residue}
to prove the main result of this subsection.

\begin{proposition} \label{inside_open_compactification}
Suppose that $\CO_{(r_1,r_2)}\cap[\alpha\diamond\beta]\neq\emptyset$. Then $[\alpha\diamond\beta]_{>0}\subseteq\CO_{(r_1,r_2)}$.

\begin{proof}
Denote $\alpha\diamond\beta = (J;x,y;v,w)$. Recall from \Cref{nonempty_intersection} that when $\CO_{(r_1,r_2)}\cap [\alpha\diamond\beta]\neq\emptyset$, we have $z_1,z_2\in W_J$, such that
$vz_2\leq r_1\leq xz_1$, and $yz_1\leq r_2\leq wz_2$. From this we have $v\leq r_1^J\leq x$ and $y^J\leq r_2^J\leq w^J$.

We first deduce some conditions on the parabolic parts. Recall from \S\ref{sec:levi_residue} the definition of $J$-residue: for $v,w\in W^J$ with $v\leq w$, we define
$$\text{Res}_J(v,w) := \max\{z\in W_J|vz\leq w\}.$$
Let $a_1 = \text{Res}_J(v,r_1^J)$, $b_1 = \text{Res}_J(r_1^J,x)$, $a_2 = \text{Res}_J(y^J,r_2^J)$, $b_2 = \text{Res}_J(r_2^J,w^J)$.
Then the condition
$vz_2\leq r_1\leq xz_1,\enspace yz_1\leq r_2\leq wz_2$
implies that 
\[z_2\leq a_1*r_{1,J},\enspace r_{1,J}\leq b_1*z_1,\]
\[y_J\circ_l z_1\leq y_Jz_1\leq a_2*r_{2,J},\enspace r_{2,J}\leq b_2*(w_Jz_2)\leq b_2*w_J*z_2.\]
From \Cref{operations}, we have $z_1\leq y_J^{-1}*a_2*r_{2,J}$. Combining these inequalities, we obtain
$r_{2,J}\leq b_2*w_J*a_1*r_{1,J},\enspace r_{1,J}\leq b_1*y_J^{-1}*a_2*r_{2,J},$
or equivalently,
\begin{equation}
r_{1,J}\circ_l r_{2,J}^{-1}\leq a_1^{-1}*w_J^{-1}*b_2^{-1},\enspace r_{1,J}\circ_r r_{2,J}^{-1}\leq b_1*y_J^{-1}*a_2.    \tag{3.1}
\end{equation}

Now, we turn to compute $(\dot r_1^{-1},\dot r_2^{-1})\cdot [\alpha\diamond\beta]_{>0}$. By \Cref{inside_open_flag} and \Cref{levi_nonnegative}, we have
\begin{gather*}\dot r_1^{J,-1}G_{\mathbf v_+,\mathbf x,>0}\subseteq U_{P_J^-}\left(U^-_{b_1,>0}T_{>0}U^+_{a_1^{-1},>0}\right)U_{P_J^+},\\ \dot r_2^{J,-1}G_{\mathbf {y^J}_+,\mathbf {w^J},>0}\subseteq U_{P_J^-}\left(U^-_{b_2,>0}T_{>0}U^+_{a_2^{-1},>0}\right)U_{P_J^+}.\end{gather*}
By \Cref{cell}, an arbitrary point of
$[\alpha\diamond\beta]_{>0}$ has the form $(g_1,\iota(g_2)^{-1}g_3^{-1})\cdot z_J,$
with
$g_1\in G_{\mathbf v_+,\mathbf x,>0},
g_2\in G_{\mathbf{y^J}_+,\mathbf{w^J},>0},
g_3\in U^-_{y_J^{-1},>0}T_{>0}U^+_{w_J^{-1},>0}.$

For such a point, we have
\begin{align*}
&(\dot r_1^{-1}g_1,\dot r_2^{-1}\iota(g_2)^{-1}g_3^{-1})\cdot z_J=(\dot r_1^{-1}g_1g_3,\iota(\dot r_2^{-1}g_2)^{-1})\cdot z_J \\
\in &\left(U_{P_J^-}\dot r_{1,J}^{-1}U^-_{b_1*y_J^{-1},>0}T_{>0}U^+_{a_1^{-1}*w_J^{-1},>0}, U_{P_J^+}\dot r_{2,J}^{-1}U^+_{b_2,<0}T_{>0}U^-_{a_2^{-1},<0}\right)\cdot z_J.
\end{align*}
To show that $(\dot r_1^{-1},\dot r_2^{-1})\cdot [\alpha\diamond\beta]_{>0}\subseteq\mathcal O_{(1,1)}$, or equivalently, $[\alpha\diamond\beta]_{>0}\subseteq\mathcal O_{(r_1,r_2)}$, it remains to show that
$$\dot r_{1,J}^{-1}U^-_{b_1*y_J^{-1}*a_2,>0}T_{>0}U_{a_1^{-1}*w_J^{-1}*b_2^{-1},>0}^+\dot r_{2,J}\subseteq B_J^-B_J^+,$$
and this follows directly from (3.1) and \Cref{inside_open_group}.
\end{proof}

\end{proposition}

\subsection{Images of nonnegative cells}
In this subsection, we consider the atlas map $\phi_{(r_1,r_2)}$ restricted to $G$, and compute the image of some totally nonnegative cells in $G_{\ge0}$. As a consequence, we show that the collection of maps $\{\phi_{(r_1,r_2)}\}$ is a positive atlas on $(\overline{G},\overline{G_{\ge0}})$.

Recall that $\text{Stab}_{G\times G}(z_I) = G_{\text{diag}}$. We may identify $(G\times G)\cdot z_I$ with $G$ by $(g_1,g_2)\to g_1g_2^{-1}$. Under this identification, we have
\[
\CO_{(r_1,r_2)}\bigcap (G\times G)\cdot z_I
\xrightarrow{\sim}\dot r_1U^-TU^+\dot r_2^{-1},
\]
\[
[I;1,w_1^{-1};1,w_2^{-1}]
\xrightarrow{\sim}B^+\dot w_1B^+\cap B^-\dot w_2B^-.
\]
In this subsection, we consider the restriction of $\phi_{(r_1,r_2)}$ to $\dot r_1U^-TU^+\dot r_2^{-1}$.

\begin{proposition} \label{image_positive}
Suppose $w_1,w_2\in W$ such that $r_1\leq w_1$, $\ell(r^{-1}_1w_1) = \ell(w_1)-\ell(r_1)$, and $r_2\leq w_2$, $\ell(w_2r_2) = \ell(w_2)-\ell(r_2)$. Then
\[
\phi_{(r_1,r_2)}(U^-_{w_1,>0}T_{>0}U^+_{w_2,>0})
=\tilde{\CB}_{\varphi((I;1,w_1^{-1})\diamond(I;1,w_2^{-1})),\varphi((\emptyset;r_1,r_2)\diamond(\emptyset;r_1,r_2)),>0}.
\]
In particular, $\phi_{(r_1,r_2)}(U_{w_0,>0}^-T_{>0}U^+_{w_0,>0})$ is a totally nonnegative cell in $\tilde{\mathcal B}_{\ge0}$.

\begin{proof}
We first write an element in $U^-_{w_1,>0}T_{>0}U^+_{w_2,>0}$ in the form of $u_1\dr_1 t\dr_2^{-1}u_2^{-1}$ for $u_1\in\dr_1U^-\dr_1^{-1}$, $t\in T$ and $u_2\in\dr_2U^+\dr_2^{-1}$. For this, we consider $h_1\in U^-_{w_1,>0}$, $h_2\in U^+_{w_2^{-1},<0}$ and $t'\in T_{>0}$. 

By \cite[Lemma 5.6]{GKL22} (see also \cite[\S6.3]{BH24}), there exist $u_-\in U^-\cap\dr_1^{-1}U^+\dr_1$, $g_1\in U_{r_1^{-1}w_1,>0}^-$, $k_1\in U_{r_1^{-1},>0}^+$ and $t_1\in T_{>0}$ such that $\dr_1^{-1}h_1 = u_-g_1k_1t_1$. Similarly, there exist $u_+\in U^+\cap\dr_2^{-1}U^-\dr_2$, $g_2\in U_{r_2^{-1}w_2^{-1},<0}^+$, $k_2\in U_{r_2^{-1},<0}^-$ and $t_2\in T_{>0}$ such that $\dr_2^{-1}h_2 = u_+g_2k_2t_2$.

An element in $U^-_{w_1,>0}T_{>0}U^+_{w_2,>0}$ can be written as
$$h_1t'h_2^{-1} = \dr_1u_-g_1k_1t_1t't_2^{-1}k_2^{-1}g_2^{-1}u_+^{-1}\dr_2^{-1}.$$
Let $k_1'\in U^+_{r_1^{-1},>0}$ and $k_2'\in U^-_{r_2^{-1},<0}$ and $t\in T_{>0}$ such that
$$k_1t_1t't_2^{-1}k_2^{-1} = (k_2')^{-1}tk_1'.$$
Then we may write
$$h_1t'h_2^{-1} = \dr_1u_-g_1(k_2')^{-1}tk_1'g_2^{-1}u_+^{-1}\dr_2^{-1}.$$

Take $u_1 = \dr_1(u_-g_1(k_2')^{-1})\dr_1^{-1}$ and $u_2 = \dr_2(u_+g_2(k_1')^{-1})\dr_2^{-1}$. We first compute $\sigma_{r_1,+}(u_1)\dr_1t$ and $\tau_{r_2,+}(u_2)\dr_2$. 

Note that, by definition, 
$$u_1\dr_1B^+ = h_1t'h_2^{-1}\dot r_2B^+ \in  U^-_{w_1,>0}T_{>0}U^+_{w_2,>0}\dot r_2B^+.$$
Furthermore, by \cite[Proposition 5.2]{BH21a}, we have
$$u_1\dot r_1B^+/B^+\in U^-_{w_1,>0}T_{>0}U^+_{w_2,>0}\dot r_2B^+\subseteq U^-_{w_1*r_2,>0}B^+/B^+.$$
By \cite[Theorem 5.3 (1)]{BH24}, $\sigma_{r_1,+}(u_1)\dr_1\in h_1'B^+$ for some $h_1'\in U_{r_1,>0}^-$. We also have 
\begin{align*}
 \sigma_{r_1,+}(u_1)\dr_1\in U^-u_1\dr_1 &= U^-\dr_1u_-g_1(k_2')^{-1}
 \\
 &= U^-\dr_1u_-g_1k_1t_1(t_1^{-1}k_1^{-1}(k'_2)^{-1}) \\
 &= U^-h_1(t_1^{-1}k_1^{-1}(k_2')^{-1})\\
 &=U^-t_1^{-1}k_1^{-1}(k_2')^{-1}\\
 &=U^-t't_2^{-1}k_2^{-1}(k_1')^{-1}t^{-1}\\
 &=U^-t't_2^{-1}(k_1')^{-1}t^{-1}
\end{align*}
Thus, $\sigma_{r_1,+}(u_1)\dr_1 = h_1't't_2^{-1}(k_1')^{-1}t^{-1}$. On the other hand, we compute that 
\begin{align*}
    (\tau_{r_2,+}(u_2)\dr_2)^{-1}\in \dr_2^{-1}u_2^{-1}U^- &= k_1'g_2^{-1}u_+^{-1}\dr_2^{-1}U^- \\
    & = k_1'g_2^{-1}\dr_2^{-1}U^-.
\end{align*}
Note that these arguments are independent of isogeny class of $G$. We have
$$(\sigma_{r_1,+}(u_1)\dot r_1)^\flat = (h_1't't_2^{-1}(k_1')^{-1}t^{-1})^\flat,\enspace (\tau_{r_2,+}(u_2)\dr_2)^{-1,\flat}\in  (k_1'g_2^{-1}\dr_2^{-1})^\flat (U^-)^\flat$$
In particular, all the torus elements lie in $\tilde T_{>0}$.

Suppose that $t = \prod_{i\in I}\omega^\vee_i(a^{-1}_i)$. Write $t^\natural = \prod_{i\in I}\tilde{\alpha}^\vee_{i^\natural}(a_i)$. It follows from our construction of $\tilde G$ that
$$t^{\flat,-1}t^\natural(k_1')^\flat t^{\natural,-1}t^{\flat}= (k_1')^\flat,$$
and then
\begin{align*}
\phi_{(r_1,r_2)}^\flat(u_1,u_2;\underline{a}) &\in (h_1't't_2^{-1}(k_1')^{-1}t^{-1})^\flat \left(\prod_{i\in I}y_{i^\natural}(a_i^{-1})\right)t^\natural(k_1'g_2^{-1}\dr_2^{-1})^\flat (U^-)^\flat\dot w_0^\flat\tilde B^+/\tilde B^+
\\
&\in (h_1't't_2^{-1})^\flat\tU^-_{c_{I^\natural},>0}(U^+_{w_2r_2,>0})^\flat\dr_2^{-1} (U^-)^\flat\dot w_0^\flat\tilde B^+/\tilde B^+ \\
&\subseteq (U_{r_1,>0}^-)^\flat\tU^-_{c_{I^\natural},>0}(U^+_{w_2r_2,>0})^\flat(\dr_2^{-1}\dot w_0)^\flat(B^+)^\flat.
\end{align*}

Similarly, 
we have
$$\phi^\sharp_{(r_1,r_2)}(u_1,u_2;\underline{a})\in (U^-_{r_2,>0})^\sharp\tU^-_{c_{I^\natural},>0}(U^+_{w_1^{-1}r_1,>0})^\sharp(\dot r_1^{-1}\dot w_0)^\sharp\tilde B^+/\tilde B^+.$$
By construction, we conclude that $\phi_{(r_1,r_2)}(U^-_{w_1,>0}T_{>0}U^+_{w_2,>0})$ is contained in 
\begin{align*}
&(U^-_{r_1,>0})^\flat(U^-_{r_2,>0})^\sharp\tU_{c_{I^\natural},>0}^-(U^+_{w_1^{-1}r_1,>0}\dr_1^{-1}\dot w_0)^\sharp(U^+_{w_2r_2,>0}\dr_2^{-1}\dot w_0)^\flat\tB^+/\tB^+ \\
=&\tilde{\CB}_{\varphi((I;1,w_1^{-1})\diamond(I;1,w_2^{-1})),\varphi((\emptyset;r_1,r_2)\diamond(\emptyset;r_1,r_2)),>0}.
\end{align*}
By \Cref{compatible_strata}, $\phi_{(r_1,r_2)}$ restricts to an isomorphism 
$$B^+\dot w_1B^+\cap B^-\dot w_2B^-\cap\dr_1U^-TU^+\dr_2^{-1}\simeq \mathring{\tilde{\CB}}_{\varphi((I;1,w_1^{-1})\diamond(I;1,w_2^{-1})),\varphi((\emptyset;r_1,r_2)\diamond(\emptyset;r_1,r_2))}.$$
On either side, the totally nonnegative cell is a connected component of the respective variety, and we have the desired equality. 
\end{proof}

\end{proposition}

\begin{proof}[Proof of \Cref{positive_atlas_for_compactfication}]
The two conditions in the definition of a positive atlas follow from \Cref{inside_open_compactification} and \Cref{image_positive}, where the latter is applied with $w_1=w_2=w_0$. 
\end{proof}

\subsection{Revisit $PGL_2$}
In \Cref{atlas_example}, we write down explicitly the atlas maps for $\overline{PGL_2}$. Now, we demonstrate that these maps preserve total positivity.

An element in $\mathcal O_{(1,1)}\cap G$ can be written as $g=y_1(a)\omega_1^\vee(b^{-1})x_1(c)$.
For $ac\ne0$, its image is
\begin{align*}
\phi_{(1,1)}(g)
&=\tilde y_{1^\natural}(b^{-1})\tilde\alpha_{1^\natural}^\vee(b)
  \tilde x_{1^\flat}(c)\tilde x_{1^\sharp}(a)
  \dot{\tilde s}_{1^\sharp}\dot{\tilde s}_{1^\flat}
  \tilde B^+/\tilde B^+\\
&=\tilde y_{1^\natural}(b^{-1})
  \tilde y_{1^\flat}(bc^{-1})
  \tilde y_{1^\sharp}(ba^{-1})\tilde B^+/\tilde B^+.
\end{align*}
The condition for either side to be totally positive is $a,b,c>0$.

An element in $\mathcal O_{(s_1,1)}\cap G$ can be written as
\[
g=x_1(a)\dot s_1\omega_1^\vee(b^{-1})x_1(c)
 =\begin{pmatrix}
 ab^{-1}&ab^{-1}c-1\\
 b^{-1}&b^{-1}c
 \end{pmatrix}\in PGL_2.
\]
For $a(ac-b)\ne0$, its image is
\begin{align*}
\phi_{(s_1,1)}(g)
&=\tilde x_{1^\flat}(a)\dot{\tilde s}_{1^\flat}
  \tilde y_{1^\natural}(b^{-1})\tilde\alpha_{1^\natural}^\vee(b)
  \tilde x_{1^\flat}(c)\dot{\tilde s}_{1^\flat}
  \tilde B^+/\tilde B^+\\
&=\tilde y_{1^\flat}(a^{-1})
  \tilde y_{1^\natural}(ab^{-1})
  \tilde y_{1^\flat}\!\left(\frac{b}{a(ac-b)}\right)
  \tilde B^+/\tilde B^+.
\end{align*}
The condition for either side to be totally positive is $a,b,ac-b>0$.

The calculations for $\phi_{(s_1,s_1)}$ and $\phi_{(1,s_1)}$ are similar.

\section{Combinatorial properties of the face poset}\label{section:combinatorics}
In this section, we study the face poset $\CQ$. Let $\hat\CQ = \CQ\sqcup\{\hat 0\}$ the augmented poset with a unique minimum adjoined. We show that $\hat\CQ$ is thin and CL-shellable.

\subsection{Properties of posets}\label{sec:comb}
Let $(P,\leq)$ be a poset. The covering relation is denoted by $\lessdot$.
For $x,y\in P$ with $x\leq y$, a maximal chain from $x$ to $y$ is a finite sequence
$x=z_0\lessdot z_1\lessdot\cdots\lessdot z_n=y$.
In general, a maximal chain may not exist.

The poset $P$ is \emph{pure} if, for any $x\leq y$, maximal chains from
$x$ to $y$ exist and all have the same length. A pure poset is
\emph{graded} if it contains a unique minimum and a unique maximum.
A pure poset is \emph{thin} if every interval of length $2$ has exactly
$4$ elements.

The \emph{order complex} $\Delta_{\mathrm{ord}}(P)$ of a finite poset
$P$ is the simplicial complex whose vertices are the elements of $P$
and whose simplices are its chains. A finite pure poset is
\emph{shellable} if its order complex is shellable: its maximal faces
can be ordered $F_1,\dots,F_n$ so that
$F_k\cap\bigl(\bigcup_{i<k}F_i\bigr)$ is a nonempty union of maximal
proper faces of $F_k$ for $k=2,\dots,n$.

\subsection{Thinness of $\hat\CQ$}
Recall that $\hat\CQ = \CQ\sqcup\{\hat 0\}$, where $\hat 0$ is an adjoined minimum. For $\alpha\diamond\beta\in\CQ$, let $\text{rank}(\alpha\diamond\beta) = \dim [\alpha\diamond\beta]+1$, and $\text{rank}(\hat 0) = 0$. It follows from \Cref{combinatorial_atlas} that $\hat\CQ$ is graded.

\begin{proposition}\label{thin}
The poset $\hat\CQ$ is thin.
\end{proposition}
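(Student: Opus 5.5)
We need to show that every length-2 interval in $\hat\CQ$ has exactly four elements. There are two kinds of length-2 intervals: those $[q',q]$ lying entirely in $\CQ$, and those $[\hat 0,q]$ with $\operatorname{rank}(q)=2$, i.e.\ $\dim[q]=1$.

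For intervals inside $\CQ$ we use \Cref{combinatorial_atlas}. The map $\varphi$ is an order-reversing embedding of $\CQ$ into $\tilde W$ with convex image $\CC$. Convexity means that for $q'\le_{\CQ} q$ every element of the Bruhat interval $[\varphi(q),\varphi(q')]$ lies in $\CC$, hence equals $\varphi(q'')$ for a unique $q''$. Since $\varphi$ is an order embedding, $q''$ automatically lies in $[q',q]$. So $\varphi$ gives an anti-isomorphism $[q',q]\cong[\varphi(q),\varphi(q')]$.

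The rank function is compatible with this. By \Cref{compatible_strata}, $\dim[\alpha\diamond\beta]=\ell(\varphi(\xi_{(r_1,r_2)}))-\ell(\varphi(\alpha\diamond\beta))$ for any chart meeting the stratum. Hence $\operatorname{rank}(q)-\operatorname{rank}(q')=\ell(\varphi(q'))-\ell(\varphi(q))$. Length-2 intervals in $\CQ$ therefore correspond to length-2 Bruhat intervals in the Coxeter group $\tilde W$. These have exactly four elements by the classical result (Bj\"orner--Brenti) that Bruhat intervals of length 2 are diamonds.

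For intervals $[\hat 0,q]$, the elements strictly between are the atoms of $\hat\CQ$ below $q$, namely the $\xi_{(r_1,r_2)}\le_{\CQ} q$. We must show there are exactly two of them when $\dim[q]=1$. Choose a chart $\CO_{(r_1,r_2)}$ meeting $[q]$, which exists because the charts cover $\overline G$. Via $\varphi$, the interval $[\xi_{(r_1,r_2)},q]$ corresponds to $[\varphi(q),\varphi(\xi_{(r_1,r_2)})]$ of length $1$, so $\varphi(\xi_{(r_1,r_2)})$ covers $\varphi(q)$.

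We then count directly. The dimension formula $|J|+\ell(x)+\ell(w)+\ell(y_J)-\ell(v)-\ell(y^J)=1$ leaves only two cases.
\begin{enumerate}
\item $J=\emptyset$ with $\ell(x)-\ell(v)+\ell(w)-\ell(y)=1$. Then \Cref{atoms} (with $W_J$ trivial) gives $v\le r_1\le x$ and $y\le r_2\le w$. Either $v\lessdot x$ and $y=w$, or $v=x$ and $y\lessdot w$. In each subcase there are exactly two choices of $(r_1,r_2)$.
\item $J=\{j\}$ with $x=v$, $y=w$ and $y\in W^J$, so that $y_J=1$ and $w_J=1$. By \Cref{atoms}, the atoms below $q$ are the $(r_1,r_2)$ with $vz_2\le r_1\le vz_1$ and $yz_1\le r_2\le yz_2$ for some $z_1,z_2\in W_J=\{1,s_j\}$. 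Since $v,y\in W^J$, the orders on $vW_J$ and $yW_J$ are the obvious ones. The solutions are $z_1=z_2$, giving $(r_1,r_2)=(v,y)$ or $(vs_j,ys_j)$. Other mixed choices would need $r_1$ to lie strictly between $v$ and $vs_j$, which is impossible, so they give nothing new.
\end{enumerate}
This second case is the step most at risk: one must check carefully that no mixed choice of $z_1,z_2$ produces a third atom.

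An alternative route for $[\hat 0,q]$ is geometric. By \Cref{positive_atlas}, the closure of a one-dimensional cell is an arc with boundary $\bigsqcup_{y'<y}M_{y',>0}$, and the cells of smaller dimension are the points $\xi$. A closed arc has exactly two boundary points, and this is cleaner than the case analysis. I would present the geometric argument, using the counting argument as a check. The main obstacle is the bookkeeping ensuring that the boundary of the arc really consists of two distinct zero-dimensional cells and not a single cell.
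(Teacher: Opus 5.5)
Your counting argument is correct and is the paper's proof. By \Cref{combinatorial_atlas}, length-$2$ intervals inside $\CQ$ are anti-isomorphic to length-$2$ Bruhat intervals of $\tW$, hence diamonds. For $[\hat 0,q]$ with $\dim[q]=1$, the dimension formula leaves exactly three shapes: $(\emptyset;x,y;v,y)$ with $v\lessdot x$, $(\emptyset;x,y;x,w)$ with $y\lessdot w$, and $(\{j\};v,y;v,y)$ with $v,y\in W^{\{j\}}$. Each has exactly two atoms by \Cref{atoms}.

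One justification needs fixing in the last case. The mixed choices $z_1\neq z_2$ are not excluded because $r_1$ would have to lie strictly between $v$ and $vs_j$; for $(z_1,z_2)=(s_j,1)$, $r_1\in\{v,vs_j\}$ is allowed. They are excluded because one sandwich is empty: $(s_j,1)$ requires $ys_j\le r_2\le y$, and $(1,s_j)$ requires $vs_j\le r_1\le v$. Both are impossible since $y<ys_j$ and $v<vs_j$. Also, the chart you choose before the case analysis is never used.

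Your preferred geometric route also works, and the bookkeeping you worry about is immediate. The closure $\overline{[q]_{>0}}$ is compact, since $\overline G$ is projective, and connected. By \Cref{positive_atlas} it is a $1$-manifold whose boundary is the union of the $0$-cells below $q$. This boundary is nonempty because $[q]_{>0}\cong\mathbb R_{>0}$ is not compact, so the closure is a closed arc. Its two endpoints are two different $0$-cells, because each $0$-cell is a single point. There is no circularity, since the positive-atlas results do not use thinness.

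The difference is one of cost. The arc argument routes a purely combinatorial statement through the positive atlas, and hence through the Bao--He regularity theorem for $\tilde{\mathcal B}_{\ge0}$. The paper's three-case check is elementary and keeps the combinatorial input to \Cref{boundary_regular} independent of the geometry.
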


\begin{proof}

By \Cref{combinatorial_atlas} and the thinness of $\tW$, every interval of length $2$ in $\mathcal Q$ contains exactly four elements. It remains to check that for $q\in\hat{\mathcal Q}$ with $\operatorname{rank}(q)=2$, there are exactly two rank-$1$ elements below $q$. Three cases occur:
\begin{itemize}
    \item $q = (\emptyset;x,y;v,y)$ with $v\le x$ and $\ell(x)=\ell(v)+1$: the two rank-$1$ elements are $\xi_{(v,y)}$ and $\xi_{(x,y)}$.
    \item $q = (\emptyset;x,y;x,w)$ with $y\le w$ and $\ell(w)=\ell(y)+1$: the two rank-$1$ elements are $\xi_{(x,y)}$ and $\xi_{(x,w)}$.
    \item $q = (\{i\};x,y;x,y)$ with $x,y\in W^{\{i\}}$: the two rank-$1$ elements are $\xi_{(x,y)}$ and $\xi_{(xs_i,ys_i)}$. 
\end{itemize}
In each case exactly two rank-$1$ elements are obtained. Hence $\hat{\mathcal Q}$ is thin. 
\end{proof}

\subsection{Shellability of $\hat\CQ$}
We prove shellability by constructing a CL-labeling of the entire
augmented poset.
\subsubsection{CL-shellable}
Let $P$ be a graded poset, and denote by $\hat 0$ (resp. $\hat 1$) the unique minimal (resp. maximal) element. We consider the set
$$E^*(P) = \{(m,x,y)|m\text{ a maximal chain of } [\hat 0,\hat 1],x,y\in m,x\lessdot y\}.$$
Let $\Lambda$ be a totally ordered set, a \emph{chain-edge labeling} of $P$ is a map $\lambda:E^*(P)\to\Lambda$ that satisfies the following
condition: If two maximal chains of $[\hat 0,\hat 1]$ coincide along their first $d$ edges, then they have the same
labels on these edges. In other words, if $m = (\hat 0 = x_0\lessdot x_1\lessdot\cdots\lessdot x_n =\hat 1)$ and $m' = (\hat 0 = x_0'\lessdot x_1'\lessdot\cdots\lessdot x_n' = \hat 1)$, where $x_i = x_i'$ for $i=0,1,\cdots,d$, then
$\lambda(m,x_{i-1},x_i) = \lambda(m',x_{i-1}',x_i')$ for $i=1,\cdots,d$. Given a chain-edge labeling $\lambda$, for every maximal chain $m = (\hat 0 = x_0\lessdot x_1\lessdot\cdots\lessdot x_n = \hat 1)$, we can assign a tuple of labels $(\lambda(m,x_0,x_1),\lambda(m,x_1,x_2),\cdots,\lambda(m,x_{n-1},x_n))$. A chain $m$ is called increasing if $\lambda(m,x_{i-1},x_i)<\lambda(m,x_i,x_{i+1})$ for $i=1,2,\cdots,n-1$.

For $x,y\in P$ and $x\leq y$, and $r$ a maximal chain of $[\hat 0,x]$, the pair $([x,y],r)$ is called a \emph{rooted interval} with $r$ the \emph{root}. We denote it by $[x,y]_r$. 

Consider a rooted interval $[x,y]_r$, and let $c$ be a maximal chain of $[x,y]$. Let $m$ be a maximal chain of $[\hat 0,\hat 1]$ extending $(r,c)$, and we have the label sequence of $m$. Let $\lambda_r(c)$ be the subsequence contributed by $c$, and by definition of chain-edge labeling, this label sequence only depends on $r$ and $c$, but does not depend on the extension $m$. We say that $c$ is an increasing chain if $\lambda_r(c)$ is an increasing sequence. Given a rooted interval, we can order its maximal chains by lexicographic order of their label sequence.

A chain-edge labeling is said to be a \emph{CL-labeling} if the following two conditions hold:
\begin{enumerate}
    \item for every rooted interval $[x,y]_r$, there is a unique increasing chain $c$;
    \item the label sequence $\lambda_r(c)$ is lexicographically minimal among all the label sequences of maximal chains in $[x,y]_r$.
\end{enumerate}
If a graded poset $P$ admits a CL-labeling, we say that $P$ is \emph{CL-shellable}. Bj\"orner and Wachs \cite{BW82} showed that if a graded poset is CL-shellable, then it is shellable. 

\begin{remark}
If the dual poset $P^*$ of $P$ is CL-shellable, we say that $P$ is dual CL-shellable. Note that $\triangle_{\text{ord}}(P)\simeq\triangle_{\text{ord}}(P^*)$. Therefore, $P$ being dual CL-shellable also implies that $P$ is shellable.
\end{remark}

\begin{remark} \label{EL-labeling}
There is a similar notion called EL-labeling which is automatically CL. The key difference is that, in an EL-labeling, a label only depends on the covering relation $x\lessdot y$ but not on the choice of a maximal chain. Therefore, one does not have to fix a root $r$ to talk about the label sequence of a maximal chain $c$ of an interval $[x,y]$. 
\end{remark}

\subsubsection{Reflection order}
Consider a Coxeter system $(W,I)$ together with the Bruhat order $\leq$. Let $T$ be the set of all reflections in $W$. A \emph{reflection order} is a total order $\prec$ on $T$ such that for every $s,t\in T$, we have either $t\prec sts\prec tstst\prec\cdots $ or $s\prec tst\prec ststs\prec\cdots$. 

For any interval $[w_1,w_2]$ of $(W,\leq)$, Dyer \cite[\S3]{Dy92} constructed a labeling as follows: if $v_1\lessdot v_2$, then $v_1^{-1}v_2\in T$, and for any maximal chain $m$ of $[w_1,w_2]$ that contains $v_1$ and $v_2$, define $\lambda_W(m,v_1,v_2) = v_1^{-1}v_2$, and this is a CL-labeling. With the same labeling, $[w_1,w_2]$ is also dual CL-shellable.

\begin{remark}
In fact, the labeling does not depend on $m$, and as in \Cref{EL-labeling}, this is an EL-labeling.
\end{remark}

\subsubsection{A CL-labeling on $\hat\CQ$}
The idea of the construction of a CL-labeling is as follows. Let $\min(\CQ)$ be the set of minimal elements in $\CQ$. For a chain-edge triple $(m,\hat 0, \xi)$ where $\xi\in\min(\CQ)$, we put $\lambda(m,\hat 0,\xi) = (\xi,\eta_\xi)$. To define other labels, we view $\CQ^*$ as a convex subset of $\tilde W$ as in \Cref{combinatorial_atlas}. For a tuple $(m,q_1,q_2)$, where $m$ is a maximal chain in $\hat\CQ$ passing through $\xi\in\min(\CQ)$, and $q_1,q_2\in m\cap\CQ$ such that $q_1\lessdot_{\CQ} q_2$, we put the label $\lambda(m,q_1,q_2) = (\xi,\varphi(q_2)\varphi(q_1)^{-1})$. 

It remains to compare the labels. We will first define a total order $\triangleleft_{\min(\CQ)}$ on $\min(\CQ)$. For each $\xi\in\min(\CQ)$, we choose a reflection order $\prec_\xi$ in the set $\tilde T$ of all reflections in $\tilde W$, and extend it to a total order on $\tilde T\sqcup\{\eta_\xi\}$ in a certain way. Then, we order the labels lexicographically, namely,
$$(\xi,\tilde t)\prec(\xi',\tilde t')\text{ if }\xi\triangleleft_{\min(\CQ)}\xi',\text{ or }\xi =\xi'\text{ and }\tilde t\prec_\xi\tilde t'.$$
Here the second factor may be the extra label $\eta_\xi$ or $\eta_{\xi'}$. Note that we never have to compare the second factor unless the two labels have the same first factor, so this total order is well-defined.

For a maximal chain $m$ of $\hat\CQ$ passing through $\xi_{(r_1,r_2)}\in\min(\CQ)$, we put $\lambda(m,\hat 0,\xi_{(r_1,r_2)}) = (\xi_{(r_1,r_2)},\eta_{(r_1,r_2)})$. For a maximal chain $m$ of $\hat\CQ$ passing through $\xi_{(r_1,r_2)}$, $q_1,q_2\in m\cap\CQ$ such that $q_1\lessdot_{\CQ} q_2$, we put $\lambda(m,q_1,q_2) = (\xi_{(r_1,r_2)},\varphi(q_2)\varphi(q_1)^{-1})$.

We refine the Bruhat order on $W$ to be a total order $<_W$, and define the total order $\triangleleft_{\min(\CQ)}$ as the lexicographic order on $(W,<_W)^2 = \min\{\CQ\}$.

To define the reflection order $\prec_{(r_1,r_2)}$, we will make use of the following lemma by Dyer \cite[\S2.11]{Dy93}.

\begin{lemma} \label{initial_section}
Let $(W,I)$ be a Coxeter system with length function $\ell$, and $T$ the set of reflections. For $w\in W$, let
$$N(w) = \{t\in T|\ell(tw) < \ell(w)\}.$$
There is a reflection order $\prec$ on $T$ such that $t_1\prec t_2$ for $t_1\in N(w)$ and $t_2\notin N(w)$.
\end{lemma}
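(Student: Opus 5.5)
The aim is to show that the inversion set $N(w)$ is an initial section for some reflection order. Reflection orders correspond to reduced expressions of the longest element when $W$ is finite. In general they correspond to limits of reduced words, and the standard characterization is in terms of biclosed sets. So the plan is to build an infinite reduced word that starts with a reduced word of $w$, and to read off the order from it.

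\textbf{Step 1 (finite case).} Assume first that $W$ is finite with longest element $w_0$. Fix a reduced expression $w^{-1}=s_{i_1}\cdots s_{i_k}$ and extend it to a reduced expression $s_{i_1}\cdots s_{i_N}$ of $w_0$. This is possible because $\ell(w^{-1}\cdot w^{-1}w_0... )$ is additive; more precisely, $w_0 = w^{-1}(ww_0)$ with lengths adding.

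\textbf{Step 2 (the order from the word).} Set
\[
t_j=s_{i_1}\cdots s_{i_{j-1}}s_{i_j}s_{i_{j-1}}\cdots s_{i_1}.
\]
Dyer's theorem says that ordering the reflections as $t_1\prec t_2\prec\cdots\prec t_N$ is a reflection order. The first $k$ of these reflections form exactly the left inversion set of $w^{-1}$ in the sense $\{t : \ell(tw^{-1})<\ell(w^{-1})\}$. Since the statement uses $N(w)=\{t:\ell(tw)<\ell(w)\}$, I will instead start from a reduced expression of $w$ itself, read so that the prefix reflections give $N(w)$. After matching these conventions, $N(w)$ is an initial segment of the order, which is what we need.

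\textbf{Step 3 (infinite case, the main obstacle).} When $W$ is infinite (the relevant case for $\tilde W$), there is no $w_0$, and a single infinite reduced word need not exhaust $T$. Here I would use Dyer's characterization of initial sections of reflection orders as biclosed sets, i.e.\ sets that are closed with complement also closed, for the dihedral reflection subgroups. $N(w)$ is finite and biclosed. The key point is that any finite biclosed set $A$ is an initial section of some reflection order. To prove this, construct the order by transfinite extension: take an order on $A$ that is compatible within each dihedral subgroup, then append an order on $T\setminus A$ coming from a reflection order for which the translated set $w^{-1}(T\setminus A)w$ behaves well. Concretely, take any reflection order $\prec'$ and define
\[
t_1\prec t_2 \iff \text{(Step 2 order on } N(w)\text{)}, \text{ or } t_1\in N(w)\not\ni t_2, \text{ or } wt_1w^{-1}\prec' wt_2w^{-1} \text{ for } t_1,t_2\notin N(w).
\]
Then check the dihedral condition separately for each dihedral reflection subgroup. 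Such a subgroup meets $N(w)$ in an initial segment of its own dihedral order, because $N(w)$ is biclosed; this is the delicate verification. This is exactly Dyer's \S2.11 result, which I would cite for the details.
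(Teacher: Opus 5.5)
The paper gives no argument of its own here; it simply cites Dyer \cite[\S2.11]{Dy93}.

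Your Steps~1--2 are fine for finite $W$ once the conventions are fixed. Take a reduced word $w=s_{i_1}\cdots s_{i_k}$ and extend it to a reduced word of $w_0$ using $\ell(w_0)=\ell(w)+\ell(w^{-1}w_0)$; the prefix reflections $t_1,\dots,t_k$ are then exactly $N(w)$. However, the lemma is used for $\tilde W$, which is infinite, so everything rests on Step~3, and the construction there is wrong.

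With an arbitrary reflection order $\prec'$ it already fails in $S_3$. Take $w=s_1$ and $s_1\prec' s_1s_2s_1\prec' s_2$, the order coming from the reduced word $s_1s_2s_1$. Then $N(w)=\{s_1\}$, conjugation by $w$ swaps $s_2$ and $s_1s_2s_1$, and your recipe outputs $s_1\prec s_2\prec s_1s_2s_1$. This is not a reflection order: in both reflection orders of $S_3$ the reflection $s_1s_2s_1$ sits in the middle. Note that $N(w)$ is biclosed and does meet the dihedral group in an initial segment. The failure is in the order you put on the complement, which biclosedness of $N(w)$ does not control. Identifying reflections with positive roots, the problematic triples are $\gamma=a\alpha+b\beta$ with $a,b>0$, $\alpha\in N(w)$ and $\beta,\gamma\notin N(w)$. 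Then $b\,w^{-1}\beta=w^{-1}\gamma+a(-w^{-1}\alpha)$ with $-w^{-1}\alpha\in N(w^{-1})$. So the required $\gamma\prec\beta$ is guaranteed only if you transport by $w^{-1}$ (not $w$) and choose $\prec'$ with $N(w^{-1})$ as a \emph{final} section.

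With that correction the gluing does work. But the existence of such a $\prec'$ is, after reversing the order, exactly the lemma for $w^{-1}$, so the argument is circular. As written, Step~3 just falls back on the same citation the paper uses.

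If you want a self-contained proof valid for infinite $W$, use a linear construction instead. Fix $x$ in the interior of the fundamental chamber of the dual of the geometric representation. Order positive roots $\beta$ lexicographically by the vector $\bigl(\langle\beta,wx\rangle,\langle\beta,y_2\rangle,\dots,\langle\beta,y_m\rangle\bigr)/\langle\beta,x\rangle$, where $x,y_2,\dots,y_m$ span the dual space, so the vector determines $\beta$. If $\gamma=a\alpha+b\beta$ with $a,b>0$, the vector for $\gamma$ is a strict convex combination of those for $\alpha$ and $\beta$, hence lies strictly between them. By the root-theoretic characterization of reflection orders, this is therefore a reflection order. Its first coordinate $\langle w^{-1}\beta,x\rangle/\langle\beta,x\rangle$ is negative exactly for $\beta\in N(w)$, so $N(w)$ is an initial section.
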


For $r\in W$, denote $D(r):=\{i\in I|rs_i<r\}$ the right descent set of $r$. Recall that
$$\varphi(\xi_{(r_1,r_2)}) = r_1^\flat r_2^\sharp c_{(D(r_1))^\natural}c_{(I\setminus D(r_1))^\natural}(r_1^{-1}w_0)^\sharp(r_2^{-1}w_0)^\flat.$$
Let $\prec_{(r_1,r_2)}$ be a total order on $\tilde T\sqcup\{\eta_{(r_1,r_2)}\}$, which restricts to a reflection order on $\tilde T$, such that
$$\tilde t_1\prec_{(r_1,r_2)}\eta_{(r_1,r_2)}\prec_{(r_1,r_2)}\tilde t_2,\text{ for }\tilde t_1\in N(r_1^\flat r_2^\sharp c_{(D(r_1))^\natural}),\tilde t_2\in\tilde T\setminus N(r_1^\flat r_2^\sharp c_{(D(r_1))^\natural}).$$
By \Cref{initial_section}, such a total order exists.

Finally, we order the labels lexicographically, namely, 
$$(\xi_{(r_1,r_2)},\tilde t)\prec(\xi_{(r_1',r_2')},\tilde t')\text{ if }\xi_{(r_1,r_2)}\triangleleft_{\min(\CQ)}\xi_{(r_1',r_2')},\text{ or }\xi_{(r_1,r_2)} =\xi_{(r_1',r_2')}\text{ and }\tilde t\prec_{(r_1,r_2)}\tilde t'.$$
Here the second factor may be the extra label $\eta_{(r_1,r_2)}$ or $\eta_{(r_1',r_2')}$. Note that we never have to compare the second factor unless the two labels have the same first factor, so this total order is well-defined.

This defines a chain-edge labeling $\lambda$. We will use \Cref{atoms} and the following two lemmas, which are concerned with the rank-$2$ elements in $\hat\CQ$, to prove that it is a CL-labeling. Recall that $\hat\CQ$ is thin by \Cref{thin}.

\begin{lemma} \label{diamond_labels}
Let $q\in\hat\CQ$, $\xi_{(r_1,r_2)},\xi_{(r_1',r_2')}\in\min(\CQ)$, and $\xi_{(r_1,r_2)}\lessdot_\CQ q$, $\xi_{(r_1',r_2')}\lessdot_{\CQ}q$. Suppose that $\xi_{(r_1,r_2)}\triangleleft_{\min(\CQ)}\xi_{(r_1',r_2')}$. Then we have $$\varphi(q)\varphi(\xi_{(r_1,r_2)})^{-1}\notin N(r_1^\flat r_2^\sharp c_{(D(r_1))^\natural}),\enspace \varphi(q)\varphi(\xi_{(r_1',r_2')})^{-1}\in N((r_1')^\flat (r_2')^\sharp c_{(D(r_1'))^\natural}).$$

\begin{proof}
Let $A_{(r_1,r_2)}^1 = r_1^\flat r_2^\sharp c_{(D(r_1))^\natural}$, $A_{(r_1,r_2)}^2 = c_{(I\setminus D(r_1))^\natural}(r_1^{-1}w_0)^\sharp(r_2^{-1}w_0)^\flat$. Then $\varphi(\xi_{(r_1,r_2)}) = A^1_{(r_1,r_2)}A^2_{(r_1,r_2)}$ and $\ell(\varphi(\xi_{(r_1,r_2)})) = \ell(A_{(r_1,r_2)}^1)+\ell(A_{(r_1,r_2)}^2)$, and as a result, we have
$$N(A_{(r_1,r_2)}^1)\bigcap A_{(r_1,r_2)}^1N(A_{(r_1,r_2)}^2)(A^1_{(r_1,r_2)})^{-1} = \emptyset.$$

Similar to \Cref{thin}, there are three cases:
\begin{itemize}
   \item $q = (\emptyset;x,y;v,y)$ with $v\le x$ and $\ell(x)=\ell(v)+1$. In this case $r_1 = v$, $r_1' = x$, $r_2 = r_2' = y$. We have
   $$\varphi(q)\varphi(\xi_{(r_1,r_2)})^{-1} = v^\flat y^\sharp c_{I^\natural}(x^{-1}v)^\sharp c_{I^\natural}(y^{-1})^\sharp (v^{-1})^\flat\in A_{(r_1,r_2)}^1N(A_{(r_1,r_2)}^2)(A^1_{(r_1,r_2)})^{-1};$$
   $$\varphi(q)\varphi(\xi_{(r_1',r_2')})^{-1} = (vx^{-1})^\flat\in N(A_{(r_1',r_2')}^1).$$
   Hence the conclusion.
   
    \item $q = (\emptyset;x,y;x,w)$ with $y\le w$ and $\ell(w)=\ell(y)+1$. In this case $r_1 = r_1' = x$, $r_2 = y$, $r_2' = w$. A similar analysis as in the first case applies.
    
    \item $q = (\{i\};x,y;x,y)$ with $i\notin D(x)\cup D(y)$. In this case $r_1 = x$, $r_2 = y$ and $r_1' = xs_i$, $r_2' = ys_i$. We have
    $$\varphi(q)\varphi(\xi_{(r_1,r_2)})^{-1} = x^\flat y^\sharp s_{i^\natural}(y^{-1})^\sharp (x^{-1})^\flat\notin N(A_{(r_1,r_2)}^1);$$
   $$\varphi(q)\varphi(\xi_{(r_1',r_2')})^{-1} = (xs_i)^\flat (ys_i)^\sharp s_{i^\natural}(s_iy^{-1})^\sharp (s_ix^{-1})^\flat\in N(A_{(r_1',r_2')}^1).$$
\end{itemize}
This completes the proof.
\end{proof}

\end{lemma}

\begin{lemma} \label{decreasing_label}
Let $q = (J;x,y;v,w)\in\CQ$ and $\xi_{(r_1,r_2)}\in\min(\CQ)$ with  $\xi_{(r_1,r_2)}\leq_\CQ q$ and $\xi_{(r_1,r_2)}\neq q$. If $(r_1,r_2)\neq(v,y^J)$, then there exist $\xi_{(r_1',r_2')}\in\min(\CQ)$ and $q'\in\CQ$, such that
$$\xi_{(r_1',r_2')}\triangleleft_{\min(\CQ)}\xi_{(r_1,r_2)},\enspace \xi_{(r_1',r_2')}\lessdot_\CQ q'\leq_\CQ q,\enspace \xi_{(r_1,r_2)}\lessdot_\CQ q'.$$

\begin{proof}
Since $\xi_{(r_1,r_2)}\leq_{\CQ}q$, there exist $z_1,z_2\in W_J$, such that
$vz_2\leq r_1\leq xz_1$, and $yz_1\leq r_2\leq wz_2$. We consider three cases.

\begin{itemize}
    \item $r_1\neq vz_2$. Take $r_1'$ such that $vz_2\leq r_1'\lessdot r_1$, and let $q'=(\emptyset;r_1,r_2;r_1',r_2)$. We have
    $$vz_2\leq r_1'\leq r_1\leq xz_1,\enspace yz_1\leq r_2\leq wz_2,$$
    so $\xi_{(r_1',r_2)}\leq_\CQ q'\leq_\CQ q$ and $\xi_{(r_1',r_2)}\triangleleft_{\min(\CQ)}\xi_{(r_1,r_2)}$. 
    
    \item $r_1 = vz_2$ and $z_2\neq e$. Take $i\in D(z_2)$. Then $r_1s_i\leq r_1$. Since $z_2\in W_J$, we have $i\in J$. Suppose that $r_2\leq r_2s_i$. Then $r_2\leq wz_2s_i$, and we have
    $$vz_2s_i=r_1s_i\leq r_1\leq xz_1,\enspace yz_1\leq r_2\leq wz_2s_i.$$
    Take $q' = (\emptyset;r_1,r_2;r_1s_i;r_2)$ and then $\xi_{(r_1,r_2)}\lessdot_\CQ q'$, $\xi_{(r_1s_i,r_2)}\lessdot_{\CQ} q'\leq_\CQ q$ and $\xi_{(r_1s_i,r_2)}\triangleleft_{\min(\CQ)}\xi_{(r_1,r_2)}$.

    Suppose instead that $r_2s_i\leq r_2$. Put $r_1'=r_1s_i$, $r_2' = r_2s_i$. Let $q' = (\{i\};r_1',r_2';r_1',r_2')$. Then, $\xi_{(r_1',r_2')}\lessdot_{\CQ}q'$, $\xi_{(r_1,r_2)}\lessdot_{\CQ}q'$, and $\xi_{(r_1',r_2')}\triangleleft_{\min(\CQ)}\xi_{(r_1,r_2)}$. It remains to show that $q'\leq_\CQ q$, which follows from
    $$r_1's_i = r_1\leq xz_1,\enspace yz_1\leq r_2 = r_2's_i,\enspace vz_2s_i\leq r_1',\enspace r_2'\leq wz_2s_i.$$
    
    \item $v = r_1$. Since $(r_1,r_2)\neq (v,y^J)$, we have $r_2\neq y^J$. Take $r_2'$ such that $y^J\leq r_2'\lessdot r_2$, and let $q' = (\emptyset;r_1,r_2';r_1,r_2)$. Then $\xi_{(r_1,r_2)}\lessdot q'$, $\xi_{(r_1,r_2')}\lessdot q'$, and $\xi_{(r_1,r_2')}\triangleleft_{\min(\CQ)}\xi_{(r_1,r_2)}$. It remains to show that $q'\leq_{\CQ}q$, which we obtain from
    $$v = r_1\leq xy_J^{-1},\enspace yy_J^{-1} = y^J\leq r_2'\leq r_2\leq w.$$
\end{itemize}
This completes the proof.
\end{proof}

\end{lemma}

We are ready to show that the construction above gives a CL-labeling.

\begin{theorem} \label{CL-shellable}
The chain-edge labeling $\lambda$ on $\hat\CQ$ is CL. As a consequence, $\hat\CQ$ is shellable.

\begin{proof}
We first consider a rooted interval $[q_1,q_2]_r$ where $q_1\neq\hat 0$. By \Cref{combinatorial_atlas}, this is isomorphic to a Bruhat interval. By our construction, the labeling is CL from the result of Dyer \cite[\S3]{Dy92}.

Now, we consider an interval $[\hat 0,q]$ where $q = (J;x,y;v,w)$. Here, the root is trivial. By our definition of $\triangleleft_{\min(\CQ)}$, the lexicographically minimal chain passes through
$\xi_{(v,y^J)}$. Since the labeling is CL over the interval $[\xi_{(v,y^J)},q]$, the lexicographical minimal chain 
$$c = (\xi_{(v,y^J)}\lessdot_\CQ q_1\lessdot_{\CQ}q_2\lessdot_\CQ\cdots\lessdot_\CQ q)$$ 
is increasing, and the first label in this chain is $(\xi_{(v,y^J)},\varphi(q_1)\varphi(\xi_{(v,y^J)})^{-1})$, by our definition of $\prec_{(v,y^J)}$ and \Cref{diamond_labels}, we have
$$(\xi_{(v,y^J)},\eta_{(v,y^J)})\prec(\xi_{(v,y^J)},\varphi(q_1)\varphi(\xi_{(v,y^J)})^{-1}).$$
Therefore the maximal chain $(\hat 0,c)$ is lexicographically minimal and increasing. Moreover, there is only one increasing chain that passes through $\xi_{(v,y^J)}$.

Now we suppose that a maximal chain
$$(\hat 0\lessdot_{\hat\CQ}\xi_{(r_1,r_2)}\lessdot_{\hat\CQ} q_1'\lessdot_{\hat\CQ}q_2'\lessdot_{\hat\CQ}\cdots\lessdot_{\hat\CQ} q)$$
is increasing but $(r_1,r_2)\neq (v,y^J)$. Since $\lambda$ defines a CL-labeling on $[\xi_{(r_1,r_2)},q]$, we know that 
$$c' =(\xi_{(r_1,r_2)}\lessdot_{\hat\CQ} q_1'\lessdot_{\hat\CQ}q_2'\lessdot_{\hat\CQ}\cdots\lessdot_{\hat\CQ} q)$$
is lexicographically minimal. However, by \Cref{decreasing_label}, there is $q''_1\in\CQ$ and $\xi_{(r_1',r_2')}\in\min(\CQ)$ such that $\xi_{(r_1,r_2)}\lessdot_{\CQ}q''_1\leq_{\CQ} q$, $\xi_{(r_1',r_2')}\lessdot_\CQ q''_1$ and $\xi_{(r_1',r_2')}\triangleleft_{\min(\CQ)}\xi_{(r_1,r_2)}$. By \Cref{diamond_labels}, we have $\varphi(q_1'')\varphi(\xi_{(r_1,r_2)})^{-1}\prec_{(r_1,r_2)}\eta_{(r_1,r_2)}$. Since $c'$ is lexicographically minimal, we have 
$$\varphi(q_1')\varphi(\xi_{(r_1,r_2)})^{-1}\preceq_{(r_1,r_2)}\varphi(q_1'')\varphi(\xi_{(r_1,r_2)})^{-1}\prec_{(r_1,r_2)}\eta_{(r_1,r_2)},$$
and thus, $(\xi_{(r_1,r_2)},\varphi(q_1')\varphi(\xi_{(r_1,r_2)})^{-1})\prec (\xi_{(r_1,r_2)},\eta_{(r_1,r_2)})$. This is a contradiction, and a maximal chain passing through $\xi_{(r_1,r_2)}$ with $(r_1,r_2)\neq(v,y^J)$ can never be increasing. This completes the proof.
\end{proof}

\end{theorem}

\section{The regularity theorem}\label{section:regularity}

We now combine the positive atlas with the combinatorial properties of the face poset.
The positive atlas supplies a manifold structure on the closure of each cell. Shellability and thinness of the face poset
identify their boundaries as spheres. A topological criterion
based on the generalized Poincar\'e conjecture then proves the
following theorem.

\begin{theorem}\label{regular_remarkable}
The decomposition
\[
\overline{G_{\ge0}}
 =\bigsqcup_{\alpha\diamond\beta\in\CQ}
     [\alpha\diamond\beta]_{>0}
\]
is a remarkable polyhedral
space with the regularity property. 
\end{theorem}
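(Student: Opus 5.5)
The remarkable polyhedral space part is already done: by \Cref{positive_atlas_for_compactfication} the maps $\{\phi_{(r_1,r_2)}\}$ form a positive atlas on $(\overline G,\overline{G_{\ge0}})$. So \Cref{positive_atlas} says that every $[\alpha\diamond\beta]_{>0}$ is a nonempty semi-algebraic cell of dimension $\dim[\alpha\diamond\beta]$, and that closures are unions of smaller cells in the order $\le_{\CQ}$. It also says each closure $\overline{[\alpha\diamond\beta]_{>0}}$ is a compact topological manifold with boundary $\bigsqcup_{q'<q}[q']_{>0}$ and interior the cell itself. What remains is the regularity property. I will use the standard criterion: a compact topological manifold with boundary whose boundary is homeomorphic to a sphere, and whose interior is an open cell, is homeomorphic to a closed ball. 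This follows from the generalized Poincar\'e conjecture (Smale, Freedman, Perelman) applied to the double, or directly via the collar theorem together with the topological Schoenflies/annulus-type argument, as in \cite{GKL22,BH24}. In low dimensions ($\le 2$) it is checked directly.

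The argument is by induction on $\operatorname{rank}(q)=\dim[q]+1$ for $q\in\CQ$. For the base case, the minimal cells $[\xi_{(r_1,r_2)}]_{>0}$ are points. Assume every cell of smaller dimension has closure homeomorphic to a closed ball, with boundary its lower cells. Then $\partial\overline{[q]_{>0}}=\bigsqcup_{q'<q}[q']_{>0}$ is a finite regular CW complex, and its face poset is the open interval $(\hat 0,q)$ in $\hat\CQ$. For a regular CW complex, the space is homeomorphic to the order complex of its face poset (Björner). Hence $\partial\overline{[q]_{>0}}\cong\Delta_{\mathrm{ord}}((\hat 0,q))$.

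Next I show this order complex is a sphere of dimension $\operatorname{rank}(q)-2=\dim[q]-1$. The interval $[\hat 0,q]$ is graded, and it is thin by \Cref{thin}. It is CL-shellable by \Cref{CL-shellable}, since CL-labelings restrict to intervals because the definition uses rooted intervals. By Björner's theorem, a thin shellable graded poset has the order complex of its proper part homeomorphic to a sphere: shellability gives a wedge of spheres, and thinness makes it a pseudomanifold, which forces a single sphere. So $\partial\overline{[q]_{>0}}\cong S^{\dim[q]-1}$. Combined with the manifold-with-boundary structure from \Cref{positive_atlas}, the criterion shows $\overline{[q]_{>0}}$ is a closed ball with interior $[q]_{>0}$, which completes the induction.

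The main obstacle is applying the topological criterion correctly: showing that a compact manifold with sphere boundary and cell interior is a ball. I will reuse the exact criterion from \cite[\S3]{GKL22} (also \cite[Theorem 2.3]{BH24}). Its hypotheses are that the closure is compact, that it is a manifold with boundary (given by the positive atlas), that the boundary is a sphere (given by the combinatorics above), and that the interior is an open ball (given by the cell structure). I will only need to check that these inputs match the hypotheses there verbatim. The top cell is the case $q$ maximal, and it gives that $\overline{G_{\ge0}}$ is a ball of dimension $\dim G$.
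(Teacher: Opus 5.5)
Your proposal is correct and follows the paper's own argument. The positive atlas (\Cref{positive_atlas_for_compactfication} with \Cref{positive_atlas}) gives the remarkable polyhedral structure and the manifold-with-boundary closures, which are compact because $\overline G$ is projective; induction on dimension, using that each $[\hat 0,q]$ is graded, thin and shellable together with Bj\"orner's sphere criterion (\Cref{shellable_regular}, which you split into ``regular CW complex $\cong$ order complex of its face poset'' and ``thin shellable $\Rightarrow$ sphere'') and the Poincar\'e-conjecture ball criterion, then yields regularity exactly as in \Cref{boundary_regular}.
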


\subsection{Regular CW complexes}
In this subsection, we recall some topological results. The first result
\cite{Bjo84} connects shellability to the topology of regular CW complexes.

\begin{proposition}\label{shellable_regular}
Let $X$ be a regular CW complex with face poset $P$. If
$P\sqcup\{\hat0,\hat1\}$, obtained by adjoining a minimum and a
maximum, is graded, thin, and shellable, then $X$ is homeomorphic to
a sphere of dimension
$\operatorname{rank}(P\sqcup\{\hat0,\hat1\})-2$.
\end{proposition}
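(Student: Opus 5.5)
This is the classical result attributed to \cite{Bjo84}, so the plan is to reproduce Bj\"orner's argument rather than invent a new one. Write $\hat P=P\sqcup\{\hat0,\hat1\}$ and let $d=\operatorname{rank}(\hat P)-2$. The first step is to pass from $X$ to a simplicial complex. Since $X$ is a regular CW complex, it is homeomorphic to the order complex $\Delta_{\mathrm{ord}}(P)$: barycentrically subdivide each closed cell inductively by coning off from an interior point, using that each closed cell is a ball whose boundary is a union of closed cells. Moreover $\Delta_{\mathrm{ord}}(P)$ coincides with the order complex of the open interval $(\hat0,\hat1)$ of $\hat P$. It therefore suffices to show that $\Delta_{\mathrm{ord}}((\hat0,\hat1))$ is a PL sphere of dimension $d$.

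The second step uses gradedness and thinness to make $\Delta=\Delta_{\mathrm{ord}}((\hat0,\hat1))$ a pseudomanifold. Gradedness makes $\Delta$ pure of dimension $\operatorname{rank}(\hat P)-2=d$. Now take a codimension-one face, that is, a maximal chain of $\hat P$ with one element $z$ deleted, with neighbours $x\lessdot z\lessdot y$. Its cofaces correspond to the elements strictly between $x$ and $y$, and thinness says there are exactly two of them. So every ridge lies in exactly two facets.

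The third step combines the pseudomanifold property with shellability. Shellability of $\hat P$ means the facets of $\Delta$ admit a shelling order. A shellable pseudomanifold without boundary is a PL sphere; this is Danaraj--Klee. The argument is induction along the shelling. The first $k$ facets form a PL $d$-ball for each $k<n$, because each new facet meets the previous union in a union of proper facets of the simplex which is neither empty nor the whole boundary, so adding it preserves being a PL ball. The last facet must meet the union along its entire boundary, since every ridge has exactly two facets. Gluing a simplex along its full boundary to a PL ball gives a PL sphere. The key point here is that at intermediate stages the intersection cannot be all of $\partial F_k$; if it were, the union would already be closed and some ridge would lie in more than two facets or outside any later facet.

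The main obstacle is the first step, the homeomorphism $X\cong\Delta_{\mathrm{ord}}(P)$. That identification must be done carefully by induction on skeleta using the Alexander trick to extend homeomorphisms from boundary spheres over balls. I would invoke the standard statement that a regular CW complex is homeomorphic to the geometric realization of its face poset, as in \cite{Bjo84}, rather than reprove it. With that, $X$ is homeomorphic to a sphere of dimension $\operatorname{rank}(\hat P)-2$.
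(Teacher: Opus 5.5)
Your proposal is correct and is essentially Bj\"orner's argument in \cite{Bjo84}, which the paper cites without reproducing: identify $X$ with $\Delta_{\mathrm{ord}}(P)$, use gradedness and thinness to see that $\Delta_{\mathrm{ord}}((\hat0,\hat1))$ is a pseudomanifold, and conclude from the shelling by Danaraj--Klee. In your sketch of Danaraj--Klee, state explicitly that $F_k\cap\bigcup_{i<k}F_i$ lies in the boundary of the ball $\bigcup_{i<k}F_i$; this follows because each ridge lies in at most two facets, and it is what makes the gluing step preserve a PL ball.
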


The following theorem follows from the generalized Poincar\'e
conjecture and Brown's collar theorem; see
\cite[Theorem~2.2]{BH24} and the references therein.

\begin{theorem}\label{thm:poincare}
Let $X$ be a compact $n$-dimensional topological manifold with
boundary such that
\begin{enumerate}
    \item $\partial X$ is homeomorphic to an $(n-1)$-dimensional sphere;
    \item $X\setminus\partial X$ is homeomorphic to an
    $n$-dimensional open ball.
\end{enumerate}
Then $X$ is homeomorphic to an $n$-dimensional closed ball $D^n$.
\end{theorem}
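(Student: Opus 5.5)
The statement immediately above is \Cref{thm:poincare}, which the paper takes from \cite[Theorem~2.2]{BH24}. Here is how I would prove it from standard results. The plan is to use the collar of the boundary to reduce to a statement about an open manifold, and then use the closed-ball structure coming from the sphere boundary.

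First, by Brown's collar theorem, $\partial X$ has an open collar in $X$: there is an embedding
\[
c:\partial X\times[0,1)\hookrightarrow X,\qquad c(p,0)=p,
\]
with open image. Gluing the cone on $\partial X\cong S^{n-1}$, namely a closed disk $D^n$, along $\partial X$ produces a compact manifold without boundary,
\[
Y=X\cup_{\partial X}D^n .
\]
The collar in $X$ together with the radial collar in $D^n$ gives charts near the gluing sphere. So $Y$ is a closed topological $n$-manifold.

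Second, I would show that $Y$ is a homotopy sphere. The key point is that $Y$ is a union of two open $n$-cells. One is the interior $X\setminus\partial X$, an open ball by hypothesis (2). The other is the open disk $D^n$ enlarged by the collar, $\mathrm{int}(D^n)\cup c(\partial X\times[0,1))$, which is homeomorphic to $\mathbb R^n$. A closed $n$-manifold covered by two open sets homeomorphic to $\mathbb R^n$ is simply connected for $n\ge2$, by van Kampen, since the intersection $\partial X\times(0,1)$ is connected for $n\ge2$. Mayer--Vietoris then gives it the homology of $S^n$, and Whitehead's theorem makes it a homotopy sphere. Alternatively, one can cite the fact that such a manifold is homeomorphic to $S^n$ directly; for $n\ge5$ this is Brown/Newman. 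By the generalized Poincar\'e conjecture (Smale, Freedman, Perelman in the respective dimensions; $n\le 2$ is classical), $Y\cong S^n$.

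Third, I would recover $X$ as the complement of an open disk. Inside $Y\cong S^n$, the sphere $\Sigma=\partial X$ is bicollared: one side is the collar in $X$, the other the radial collar in $D^n$. The generalized Schoenflies theorem (Brown) then says the closure of each complementary component of $\Sigma$ in $S^n$ is a closed $n$-ball. One of these closures is exactly $X$, so $X\cong D^n$. The low-dimensional cases $n=0,1$ are trivial and are checked directly.

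The main obstacle is the second step, identifying $Y$ with $S^n$. It relies on deep inputs (the Poincar\'e conjecture in all dimensions) plus the verification that $Y$ is a homotopy sphere, and the latter needs some care about the union of the two cells. I would try first to cite the "two open cells" criterion; failing that, I would argue via van Kampen and Mayer--Vietoris as above.
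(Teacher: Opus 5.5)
Your argument is correct. It follows the route the paper points to: the paper gives no proof itself, citing \cite[Theorem~2.2]{BH24} and invoking Brown's collar theorem and the generalized Poincar\'e conjecture, and you make explicit that Brown's generalized Schoenflies theorem supplies the last step.

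A side remark: once you have shown $Y$ is a union of two open $n$-cells, Schoenflies alone already gives $Y\cong S^n$ in every dimension. Applying it to the bicollared sphere $c(\partial X\times\{1/2\})$ inside $X\setminus\partial X\cong\mathbb R^n$ even yields $X\cong D^n$ directly. So the van Kampen/Whitehead/Poincar\'e detour, and the restriction ``$n\ge5$, Brown/Newman'' in your alternative, are not actually needed.
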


We use the above results to give a criterion for the regularity property of a remarkable polyhedral space. 

\begin{proposition}\label{boundary_regular}
Let $(X=\bigsqcup X_\gamma,X_{\ge0})$ be a remarkable polyhedral
space, and write $X_{\gamma,>0}=X_\gamma\cap X_{\ge0}$.
Let $P$ be the closure poset of its \emph{nonempty} nonnegative
cells, and put $\hat P=P\sqcup\{\hat0\}$. Suppose that
\begin{enumerate}
    \item for every $\gamma\in P$, the interval $[\hat0,\gamma]$
    is finite, thin, and shellable;
    \item for every $\gamma\in P$, the Hausdorff closure
    $\overline{X_{\gamma,>0}}$ is a compact topological manifold
    with boundary
    \[
    \partial\overline{X_{\gamma,>0}}
      =\bigsqcup_{\gamma'\in P,\,\gamma'<\gamma}X_{\gamma',>0}.
    \]
\end{enumerate}
Then $X_{\ge0}$ has the regularity property.
\end{proposition}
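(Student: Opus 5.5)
We argue by induction on $\dim X_{\gamma,>0}$, and show for each $\gamma\in P$ that $\overline{X_{\gamma,>0}}$ is homeomorphic to a closed ball whose interior is $X_{\gamma,>0}$. If $\dim X_{\gamma,>0}=0$, then $\gamma$ is minimal in $P$ and the cell is a point, so there is nothing to prove. For the inductive step, suppose the claim holds for all $\gamma'<\gamma$.

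First, $Y:=\partial\overline{X_{\gamma,>0}}=\bigsqcup_{\gamma'<\gamma}X_{\gamma',>0}$ is a regular CW complex. By the remarkable polyhedral condition (3), the closure of each of its cells is a union of cells of $Y$. By the inductive hypothesis, each closed cell $\overline{X_{\gamma',>0}}$ is a closed ball of dimension $d_{\gamma'}$ with interior $X_{\gamma',>0}$, so its boundary is the union of lower cells. Compactness of $\overline{X_{\gamma,>0}}$ and finiteness of $[\hat0,\gamma]$ make $Y$ a finite complex. Hence $Y$ is a finite regular CW complex whose face poset is the open interval $(\hat0,\gamma)$ in $\hat P$.

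Next, apply \Cref{shellable_regular} to $Y$. The poset obtained by adjoining a minimum and a maximum to $(\hat0,\gamma)$ is exactly $[\hat0,\gamma]$, which is finite, thin and shellable by hypothesis (1). Gradedness comes from purity, which is part of our notion of thin and shellable. We still need to match the rank with the dimension. In a regular CW complex, the closure poset of the cells is graded by $\dim+1$. Thinness of the rank-2 intervals together with the induction forces a covering relation $\gamma''\lessdot\gamma'$ inside $(\hat0,\gamma)$ to have $d_{\gamma''}=d_{\gamma'}-1$. Hence $\operatorname{rank}[\hat0,\gamma]=d_\gamma+1$, and $Y$ is homeomorphic to $S^{d_\gamma-1}$. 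One subtle point is the top step, where we need $d_{\gamma'}=d_\gamma-1$ for coatoms $\gamma'\lessdot\gamma$. This follows because $Y$ is the boundary of a $d_\gamma$-manifold and so has dimension $d_\gamma-1$, and because the shellable order complex is pure.

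Finally, by hypothesis (2), $\overline{X_{\gamma,>0}}$ is a compact $d_\gamma$-manifold with boundary $Y\simeq S^{d_\gamma-1}$. Its interior $X_{\gamma,>0}\simeq\mathbb R_{>0}^{d_\gamma}$ is an open ball by condition (2) of the definition of a remarkable polyhedral space. \Cref{thm:poincare} then gives a homeomorphism $\overline{X_{\gamma,>0}}\simeq D^{d_\gamma}$, and this homeomorphism carries the interior to $X_{\gamma,>0}$ because manifold interiors are preserved under homeomorphism.

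The main obstacle is the rank/dimension bookkeeping needed to invoke \Cref{shellable_regular} with the correct sphere dimension. If that matching fails, we can fall back on a homological argument: a shellable thin poset yields a sphere of some dimension $k$, and since $Y$ is the boundary of a compact $d_\gamma$-manifold, $Y$ has dimension $d_\gamma-1$ (for instance via local homology). Together these force $k=d_\gamma-1$.
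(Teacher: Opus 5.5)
Your proof is correct and follows the paper's argument. You induct on dimension and use the inductive hypothesis to make $\partial\overline{X_{\gamma,>0}}$ a finite regular CW complex with face poset $(\hat0,\gamma)$; then \Cref{shellable_regular} identifies it as a sphere and \Cref{thm:poincare} finishes. Your rank/dimension bookkeeping, which shows $\operatorname{rank}[\hat0,\gamma]=d_\gamma+1$ via purity and the fact that the boundary is a $(d_\gamma-1)$-manifold, fills in a step the paper leaves implicit. One small correction: the drop of one dimension along each covering relation comes from the regular CW structure supplied by induction, not from thinness.
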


\begin{proof}
We induct on $n=\dim X_{\gamma,>0}$. If $n=0$, the cell and its closure are a single point.

Now we consider general $\gamma$. By induction hypothesis, $\partial \overline{X_{\gamma,>0}}$ is a regular CW complex, with face poset $[\hat 0,\gamma]\setminus\{\hat 0,\gamma\}$. It then follows from condition (1) and \Cref{shellable_regular} that $\partial\overline{X_{\gamma,>0}}$ is homeomorphic to a sphere of codimension $1$ in $\overline{X_{\gamma,>0}}$. Finally, it follows from \Cref{thm:poincare} that $\overline{X_{\gamma,>0}}$ is homeomorphic to a closed ball with interior $X_{\gamma,>0}$. This completes the proof.
\end{proof}

\subsection{Proof of \Cref{regular_remarkable}}
It follows from \Cref{positive_atlas_for_compactfication} that $(\overline{G},\overline{G_{\ge0}})$ admits a positive atlas, and thus by \Cref{positive_atlas} is a remarkable polyhedral space. Moreover, the closure of each cell $[\alpha\diamond\beta]_{>0}$ is a topological manifold with boundary
$$\partial\overline{[\alpha\diamond\beta]_{>0}} = \bigsqcup_{\alpha'\diamond\beta'<_{\mathcal Q}\alpha\diamond\beta}[\alpha'\diamond\beta']_{>0}.$$
Since $\overline{G}$ is projective, the closure $\overline{[\alpha\diamond\beta]_{>0}}$ is compact. The augmented face poset is graded, thin, and shellable by \Cref{combinatorial_atlas}, \Cref{thin} and \Cref{CL-shellable}. It then follows from \Cref{boundary_regular} that $\overline{G_{\ge0}}$ has the regularity property.

\section{Examples of types \texorpdfstring{$A_1$ and $A_2$}{A1 and A2}} \label{section:examples}
In this section, we present concrete examples of the totally nonnegative part of the wonderful compactification of $PGL_2$ and $PGL_3$.

\subsection{Wonderful compactification of projective linear groups} \label{linear_compactification}
We consider the projective linear group $G = PGL_n$. Let $G_{\text{sc}} = SL_n$ be the simply connected cover of $G$. Denote $I = \{1,2,\cdots, n-1\}$. For $i\in I$, we denote $\omega_i$ the corresponding fundamental weight of $G_{\text{sc}}$, and $V_{\omega_i}$ the corresponding irreducible highest weight module. Then $V_{\omega_1}\simeq\mathbb R^n$ is the natural representation of $SL_n$, and $V_{\omega_k} = \wedge^k\mathbb R^n$ is the $k$th exterior power of the natural representation. 

For $g\in PGL_n(\mathbb R)$, choose a matrix representative
$A\in GL_n(\mathbb R)$. The map
\[
PGL_n\longrightarrow\prod_{i\in I}\mathbb P(\operatorname{End}(V_{\omega_i})),
\qquad
[A]\longmapsto([A],[\wedge^2 A],\ldots,[\wedge^{n-1}A])
\]
is independent of this choice. According to \cite{DP83}, the wonderful compactification $\overline G$ is the Zariski closure of the image.

\subsection{Type $A_1$}\label{sec:rank_one_details}
Recall from \Cref{ex:rank_one} that $\overline{PGL_2}\simeq\mathbb P^3$, and 
$$\overline{PGL_{2,\ge0}} = \{[a:b:c:d]\in\mathbb P^3\big|a,b,c,d,ad-bc\ge0\}.$$
We now describe its cells and closure relations.

The unique open $G\times G$-orbit is isomorphic to $PGL_2$ and its totally nonnegative part is the monoid $PGL_{2,\ge0}$. It consists of one $3$-cell where all minors are positive, two $2$-cells:
\begin{align*}
U^-_{s_1,>0}T_{>0} = \left\{[a:0:c:1]\in\mathbb P^3\big|a,c>0\right\}, \\
U_{s_1,>0}^+T_{>0} = \left\{[a:b:0:1]\in\mathbb P^3\big|a,b>0\right\},
\end{align*}
and one $1$-cell:
$$T_{>0}=\left\{[a:0:0:1]\in\mathbb P^3\big|a>0\right\}.$$

The unique closed $G\times G$-orbit is isomorphic to $\mathbb P^1\times\mathbb P^1$, and its totally nonnegative part is $\mathbb P^1_{\ge0}\times\mathbb P^1_{\ge0}$. It consists of one $2$-cell, four $1$-cells, and four $0$-cells. 

The Hasse diagram of the face poset and the regular CW complex are illustrated as follows.

\begin{figure}[htbp]
\begin{minipage}{0.45\linewidth}
\begin{tikzpicture}[scale=0.6,
        every node/.style={font=\footnotesize, circle, draw=blue, minimum size=16pt, inner sep=1pt}]
\node (1) at(4,8){};
\node (2) at(2,6){};
\node (3) at(4,6){};
\node (4) at(6,6){};
\node (5) at(0,4){};
\node (6) at(2,4){};
\node (7) at(4,4){};
\node (8) at(6,4){};
\node (9) at(8,4){};
\node (10) at(1,2){};
\node (11) at(3,2){};
\node (12) at(5,2){};
\node (13) at(7,2){};

\draw[-] (1)--(2);
\draw[-] (1)--(3);
\draw[-] (1)--(4);
\draw[-] (2)--(5);
\draw[-] (2)--(6);
\draw[-] (2)--(7);
\draw[-] (3)--(5);
\draw[-] (3)--(6);
\draw[-] (3)--(8);
\draw[-] (3)--(9);
\draw[-] (4)--(7);
\draw[-] (4)--(8);
\draw[-] (4)--(9);
\draw[-] (5)--(10);
\draw[-] (5)--(12);
\draw[-] (6)--(10);
\draw[-] (6)--(11);
\draw[-] (7)--(11);
\draw[-] (7)--(12);
\draw[-] (8)--(12);
\draw[-] (8)--(13);
\draw[-] (9)--(11);
\draw[-] (9)--(13);
\end{tikzpicture}
\end{minipage}
\hfill
\begin{minipage}{0.45\linewidth}
  \IfFileExists{sl2.pdf}{\includegraphics[width=2in]{sl2.pdf}}{
\IfFileExists{sl2.png}{\includegraphics[width=2in]{sl2.png}}{
\begin{tikzpicture}[scale=.85]
\fill[red!5] (0,0)--(3,0)--(3,3)--(0,3)--cycle;
\draw[red,thick] (0,0)--(3,0)--(3,3)--(0,3)--cycle;
\draw[red,thick] (0,0)--(3,3);
\foreach \p in {(0,0),(3,0),(3,3),(0,3)}\fill[red] \p circle (2pt);
\node[red] at (2,.7) {$2$-cell};
\node[red] at (1,2.3) {$2$-cell};
\node[red,align=center,font=\scriptsize] at (1.5,-.6)
{Boundary sphere: the exterior\\is the third $2$-cell};
\end{tikzpicture}
}}
\end{minipage}
\caption{Hasse diagram and cell complex}
\end{figure}

The boundary is a two-dimensional sphere consisting of a quadrilateral
and two triangles. The quadrilateral is
$\mathbb P^1_{\ge0}\times\mathbb P^1_{\ge0}$. The two triangles are
$\overline{U^-_{s_1,>0}T_{>0}}$ and
$\overline{U^+_{s_1,>0}T_{>0}}$, and they share the edge
$\overline{T_{>0}}$, whose endpoints are $[1:0:0:0]$ and $[0:0:0:1]$.

\subsection{Type $A_2$}
For $G=PGL_3$, both fundamental representations $V_{\omega_1}$ and $V_{\omega_2}$ have dimension $3$. We think of $\mathbb P^8$ as the projective space of $\text{Mat}_3$, the space of $3\times 3$ matrices. After re-numbering and re-scaling coordinates, the embedding in \S\ref{linear_compactification} can be written as
$$PGL_3\hookrightarrow\mathbb P^8\times\mathbb P^8,\enspace g\mapsto ([g],[g^{-1}]).$$
The Zariski closure is 
$$\overline{PGL_3} = PGL_3\bigsqcup\left\{([A],[B])\in\mathbb P^8\times\mathbb P^8\big|AB=BA = 0\right\}.$$
The $G\times G$-action on $\overline G$ can be described as
$$(g_1,g_2)\cdot ([A],[B]) = ([g_1Ag_2^{-1}],[g_2Bg_1^{-1}]).$$

Let $\text{Mat}_{3,>0}$ be the space of all totally positive $3\times 3$ matrices, $\text{Mat}_{3,\ge0}$ be the space of totally nonnegative but not necessarily invertible matrices. By Whitney's Theorem \cite{Whit52}, the Hausdorff closure of $\text{Mat}_{3,>0}$ in $\text{Mat}_3(\mathbb R)$ is $\text{Mat}_{3,\ge0}$.

We have that $\overline{PGL_{3,\ge0}}$ is
$$PGL_{3,\ge0}\bigsqcup\{([A],[B])\in\mathbb P^8\times\mathbb P^8|AB=BA=0,A\in\text{Mat}_{3,\ge0},\dot w_0^{-1}B\dot w_0\in\text{Mat}_{3,\ge0}\}.$$
Here, $\dot w_0 = \begin{pmatrix}
    0 & 0 & 1 \\
    0 & -1 & 0 \\
    1 & 0 & 0
\end{pmatrix}$.

Recall that, for $J\subseteq I = \{1,2\}$, we have a point $z_J\in\overline{G}$ such that
$$\text{Stab}_{G\times G}(z_J) = (U_{P_J^+},U_{P_J^-}) \{(l_1,l_2)\in L_J\times L_J|l_1l_2^{-1}\in Z(L_J)\}.$$
In this case, we may take
$$z_I = \left(\begin{bmatrix}
    1 & 0 & 0 \\
    0 & 1 & 0 \\
    0 & 0 & 1
\end{bmatrix},\begin{bmatrix}
    1 & 0 & 0 \\
    0 & 1 & 0 \\
    0 & 0 & 1
\end{bmatrix}\right),\enspace z_\emptyset = \left(\begin{bmatrix}
    1 & 0 & 0 \\
    0 & 0 & 0 \\
    0 & 0 & 0
\end{bmatrix},\begin{bmatrix}
    0 & 0 & 0 \\
    0 & 0 & 0 \\
    0 & 0 & 1
\end{bmatrix}\right),$$
$$z_{\{1\}} = \left(\begin{bmatrix}
    1 & 0 & 0 \\
    0 & 1 & 0 \\
    0 & 0 & 0
\end{bmatrix},\begin{bmatrix}
    0 & 0 & 0 \\
    0 & 0 & 0 \\
    0 & 0 & 1
\end{bmatrix}\right),\enspace z_{\{2\}} = \left(\begin{bmatrix}
    1 & 0 & 0 \\
    0 & 0 & 0 \\
    0 & 0 & 0
\end{bmatrix},\begin{bmatrix}
    0 & 0 & 0 \\
    0 & 1 & 0 \\
    0 & 0 & 1
\end{bmatrix}\right).$$
The open $G\times G$-orbit $(G\times G)\cdot z_I$ is isomorphic to $G = PGL_3$, and its nonnegative part is the totally nonnegative monoid $PGL_{3,\ge0}$, which consists of $36$ cells of various dimensions, indexed by pairs of elements in the symmetric group $S_3$. The closed $G\times G$-orbit $(G\times G)\cdot z_\emptyset$ is isomorphic to the product of two copies of the full flag variety of $PGL_3$, and its totally nonnegative part consists of $19^2 = 361$ cells of various dimensions, indexed by pairs of Bruhat intervals in $S_3$.

The $G\times G$-orbit $(G\times G)\cdot z_{\{1\}}$ can be described as 
$$\left\{([A],[B])\in\mathbb P^8\times\mathbb P^8\big| AB=BA=0,\text{rank}(A)=2,\text{rank}(B)=1\right\}.$$

Within this orbit, the totally nonnegative part is characterized by
representatives satisfying $A\in\operatorname{Mat}_{3,\ge0}$ and
$(-1)^{i+j}b_{ij}\ge0$ for $B=(b_{ij})$.
By the description of $\CQ$ in \S\ref{sec:wonderful_strata}, the cells
are indexed by two Bruhat intervals in $W^{\{1\}}$ and a pair of elements
in $W_{\{1\}}\simeq S_2$. The three-element chain $W^{\{1\}}$ has six
intervals, giving $6^2\cdot2^2=144$ cells. By symmetry, the totally nonnegative part of the other
intermediate $G \times G$-orbit also has $144$ cells.

Thus $\overline{PGL_{3,\ge0}}$ is an $8$-dimensional regular CW complex
with $36+361+144+144=685$ cells. The numbers of cells
in each dimension are
\[
\begin{array}{c|ccccccccc}
\text{dimension}&0&1&2&3&4&5&6&7&8\\ \hline
\text{number of cells}&36&114&173&166&112&56&21&6&1
\end{array}
\]
Their alternating sum is $1$, as expected for a closed ball.


\begin{thebibliography}{99}

\bibitem{BH21a} H. Bao and X. He, \emph{Flag manifolds over semifields}, Algebra \& Number Theory {\bf 15}(8), 2037--2069 (2021)

\bibitem{BH21b}
H. Bao and X. He, \emph{A Birkhoff-Bruhat Atlas for partial flag varieties}, Indag. Math. {\bf 32}, 1152--1173 (2021), special issue to the memory of T.A. Springer.

\bibitem{BH22}
H. Bao and X. He, \emph{Total positivity in twisted product of flag varieties}, arXiv:2211.11168.

\bibitem{BH24}
H. Bao and X. He, \emph{Product structure and regularity theorem for totally nonnegative flag varieties}, Invent. Math. {\bf 237}, no. 1, 1--47  (2024). 

\bibitem{BD94}Y. Billig and M. J. Dyer, \emph{Decompositions of Bruhat type for Kac--Moody groups}, Nova J. Algebra Geom.
{\bf 3}(1), 11--39 (1994).

\bibitem{Bjo80}A. Bj\"orner, \emph{Shellable and Cohen-Macaulay partially ordered sets}, Trans. Amer. Math. Soc. {\bf 260}, 159--183 (1980).

\bibitem{Bjo84}A. Bj\"orner, \emph{Posets, regular CW complexes and Bruhat order}, Eur. J. Comb. {\bf 5}, 7--16 (1984)

\bibitem{BW82}A. Bj\"orner and M. Wachs, \emph{Bruhat order of Coxeter groups and shellability}, Adv. Math. {\bf 43} 87--100 (1982).


\bibitem{CD03}Y. Chen and M. Dyer, \emph{On the combinatorics of $B\times B$-orbits on group compactifications}, J. Algebra. {\bf 263}(2), 278-293 (2003). 

\bibitem{DP83}C. De Concini and C. Procesi, \emph{Complete symmetric varieties}, Invariant theory (Montecatini,
1982), Lecture Notes in Mathematics {\bf 996}, 1--44 (1983).

\bibitem{Dy92}M. J. Dyer, \emph{Hecke algebras and shellings of Bruhat intervals. II. Twisted Bruhat orders}, In: Kazhdan-Lusztig Theory and Related Topics, Chicago, IL, 1989. Contemp. Math., vol. 139, pp. 141--165. Am. Math. Soc., Providence (1992)

\bibitem{Dy93}M. J. Dyer, \emph{Hecke algebras and shellings of Bruhat intervals}, Compositio Mathematica {\bf 89}, 91--115 (1993).



\bibitem{GKL22}P. Galashin, S. Karp, and T. Lam, \emph{Regularity theorem for totally nonnegative flag varieties}, J. Am. Math. Soc. {\bf 35}(2), 513--579 (2022).

\bibitem{He04}X. He, \emph{Total positivity in the De Concini-Procesi Compactification}, Represent. Theory {\bf 8}, 52-71 (2004).

\bibitem{He05}X. He,
\emph{Some subvarieties of the De Concini-Procesi compactification}, Massachusetts Institute of Technology (2005).

\bibitem{He07}X. He, \emph{Minimal length elements in some double cosets of Coxeter groups}, Adv. in Math.,
{\bf 215}, 469-503 (2007).

\bibitem{HL11}X. He and J. Lu, \emph{On Intersections of Certain Partitions of a group compactification}, International Mathematics Research Notices, {\bf 2011}(11), 2534--2564 (2011).



\bibitem{HKL}
X. He, A. Knutson, and J.-H. Lu, unpublished notes (2013).

\bibitem{HSX25} X. He, C. Simpson, and K. Xie, \emph{Total positivity for matroid Schubert varieties}. Sel. Math. New Ser. {\bf 31}, 83 (2025).

\bibitem{HX26}X. He and K. Xie, \emph{Total positivity in twisted flag varieties}, arXiv:2602.09350.


\bibitem{Her14}P. Hersh, \emph{Regular cell complexes in total positivity}, Invent. Math. {\bf 197}, 57--114 (2014)

\bibitem{KL79}
D. Kazhdan and G. Lusztig, \emph{Representations of Coxeter groups and Hecke algebras}, Invent. Math. {\bf 53}, 165--184 (1979).

\bibitem{LLM02}
V. Lakshmibai, P. Littelmann, and P. Magyar, \emph{Standard Monomial Theory for Bott--Samelson Varieties}, Compositio Mathematica {\bf 130}, 293--318 (2002).

\bibitem{L94} G. Lusztig, 
\emph{Total positivity in reductive groups}, Lie theory and geometry, Progress in Mathematics, {\bf 123}, 531-568 (1994).

\bibitem{L20} G. Lusztig, \emph{Positive structures in Lie theory}, ICCM Not. {\bf 8}(1), 50--54 (2020).

\bibitem{MR04}B. Marsh and K. Rietsch, \emph{Parametrizations of flag varieties}, Represent. Theory {\bf 8}, 212--242 (2004).

\bibitem{Ri99}
K. Rietsch, \emph{An algebraic cell decomposition of the nonnegative part of a flag variety}, 
J. Algebra {\bf 213}(1), 144--154 (1999). 


\bibitem{Ri06}K. Rietsch, \emph{Closure relations for totally nonnegative cells in $G/P$}, Math. Res. Lett. {\bf 13}, no.5-6, 775--786 (2006).  

\bibitem{Spr07}T. A. Springer, \emph{Some results on compactifications of semisimple groups}, Proceedings of the International Congress of Mathematicians Madrid {\bf 2}, 1337--1348 (2007).

\bibitem{Whit52}A. Whitney, \emph{A reduction theorem for totally positive matrices}, J. d'Analyse Math. {\bf 2}, 88--92 (1952).



\end{thebibliography}
\end{document}